\newlength{\tabwidth}
\newlength{\tabheight}
\newlength{\tabrule}
\newlength{\tabwidthx}
\newlength{\tabheightx}
\def\gentabbox#1#2#3#4{\vbox to \tabheight{\setlength{\tabrule}{#3}%
  \setlength{\tabwidthx}{#1\tabwidth}\addtolength{\tabwidthx}{\tabrule}%

\setlength{\tabheightx}{#2\tabheight}\addtolength{\tabheightx}{-\tabheight}%
  \hbox to #1\tabwidth{%
    \hspace{-0.5\tabrule}\rule{\tabrule}{#2\tabheight}\hspace{-\tabrule}%
    \vbox to #2\tabheight{\hsize=\tabwidthx%
      \vspace{-0.5\tabrule}\hrule width\tabwidthx height\tabrule%
      \vspace{-0.5\tabrule}\vfil%
      \hbox to \tabwidthx{\hss#4\hss}%
        \vfil\vspace{-0.5\tabrule}%
      \hrule width\tabwidthx height\tabrule\vspace{-0.5\tabrule}}%
    \hspace{-\tabrule}\rule{\tabrule}{#2\tabheight}\hspace{-0.5\tabrule}}%
  \vspace{-\tabheightx}}}
\def\genblankbox#1#2{\vbox to \tabheight{\vfil\hbox to
#1\tabwidth{\hfil}}}
\def\tabbox#1#2#3{\gentabbox{#1}{#2}{0.4pt}{\strut #3}}
\newcommand{\field}{\mathbb}
\newcommand{\liealgebra}{\mathfrak}
\newcommand{\la}{\liealgebra}
\newcommand{\C}{{\field C}}
\newcommand{\R}{{\field R}}
\newcommand{\Z}{{\field Z}}
\newcommand{\N}{{\field N}}
\newcommand{\Q}{{\mathbb Q}}
\renewcommand{\b}{\liealgebra b}
\newcommand{\n}{{\la n}}
\newcommand{\SL}{\mathrm{SL}}
\newtheorem{prop}{Proposition}[section]
\newtheorem{lemma}[prop]{Lemma}
\newtheorem{theorem}[prop]{Theorem}
\newtheorem{proposition}[prop]{Proposition}
\newtheorem{conjecture}[prop]{Conjecture}
\theoremstyle{definition}
\newtheorem{remark}[prop]{Remark}
\newtheorem{example}[prop]{Example}
\newtheorem*{question}{Question}
\newtheorem{definition}[prop]{Definition}
\newcommand{\caD}{\mathcal{D}}
\newcommand{\caG}{\mathcal{G}}
\newcommand{\caH}{\mathcal{H}}
\newcommand{\caM}{\mathcal{M}}
\newcommand{\caO}{\mathcal{O}}
\newcommand{\id}{\mathrm{id}}
\begin{document}
\title[$(1,2,1,2)$-avoiding $K$-orbits on $G/B$]
{Combinatorial results on $(1,2,1,2)$-avoiding $GL(p,\C) \times GL(q,\C)$-orbit closures on $GL(p+q,\C)/B$}

\author{Alexander Woo}
\thanks{AW is partially supported by NSA Young Investigators Grant H98230-13-1-0242}
\email{awoo@uidaho.edu}
\address{Alexander Woo \\
Department of Mathematics \\
Unversity of Idaho \\
875 Perimeter Drive, MS 1103 \\
Moscow, ID 83844-1103 \\
USA}
\author{Benjamin J. Wyser}
\thanks{BW is supported by NSF International Research Fellowship 1159045}
\email{Ben.Wyser@ujf-grenoble.fr}
\address{Ben Wyser \\
Institut Fourier - UMR 5582 \\
100 Rue des Math\'{e}matiques - BP 74 \\
38402 St. Martin-d'h\`{e}res \\
France}

\date{January 13, 2014}


\begin{abstract}
Using recent results of the second author which explicitly identify the ``$(1,2,1,2)$-avoiding" $GL(p,\C) \times GL(q,\C)$-orbit closures on the flag manifold $GL(p+q,\C)/B$ as certain Richardson varieties, we give combinatorial criteria for determining smoothness, lci-ness, and Gorensteinness of such orbit closures.  (In the case of smoothness, this gives a new proof of a theorem of W.M. McGovern.)  Going a step further, we also describe a straightforward way to compute the singular locus, the non-lci locus, and the non-Gorenstein locus of any such orbit closure.

We then describe a manifestly positive combinatorial formula for the Kazhdan-Lusztig-Vogan polynomial $P_{\tau,\gamma}(q)$ in the case where $\gamma$ corresponds to the trivial local system on a $(1,2,1,2)$-avoiding orbit closure $Q$ and $\tau$ corresponds to the trivial local system on any orbit $Q'$ contained in $\overline{Q}$.  This combines the aforementioned result of the second author, results of A. Knutson, the first author, and A. Yong, and a formula of Lascoux and Sch\"{u}tzenberger which computes the ordinary (type $A$) Kazhdan-Lusztig polynomial $P_{x,w}(q)$ whenever $w \in S_n$ is cograssmannian.
\end{abstract}

\maketitle

\section{Introduction}\label{sec:introduction}

Let $G$ be a complex semisimple reductive group and $B$ a Borel subgroup of $G$.  The opposite Borel subgroup $B^-$ (as well as $B$ itself) acts on the flag variety $G/B$ with finitely many orbits.  The closures of the orbits are known as the (opposite) Schubert varieties.  These orbits are indexed by the Weyl group $W$ for $G$, so for each $w\in W$, we have a Schubert variety $X^w$.  Especially in the case $G=\SL(n,\C)$, frequently known as the ``type A case'', the geometry of Schubert varieties has been extensively studied, both for its intrinsic interest and for its applications to the representation theory of $G$.

Particularly interesting for our present purposes are results that relate the geometry of $X^w$ to the combinatorics of the indexing element $w$ using the combinatorial notion of pattern avoidance.  Abe and Billey have recently written an excellent survey of such results~\cite{Abe-Billey-14}, of which we mention only a few here:

\begin{itemize}
	\item Pattern avoidance criteria have been given in all types to determine which Schubert varieties are smooth and which are rationally smooth \cite{LS-90,Billey-98,Billey-Postnikov-05}.
	\item In type $A$, the singular locus of a Schubert variety is completely understood in terms of patterns appearing in the Weyl group element \cite{Billey-Warrington-03,Cortez-03,Manivel-01,KLR-03}.
	\item In type $A$, criteria in terms of a generalization of pattern avoidance have been given to determine which Schubert varieties are Gorenstein \cite{Woo-Yong-06}.
	\item In type $A$, pattern avoidance criteria have been given to determine which Schubert varieties are local complete intersections (``lci") \cite{Ulfarsson-Woo-11}.
\end{itemize}

Now let $\theta$ be an involution of $G$, and let $K=G^\theta$ be the fixed points of the involution.  The pair $(G,K)$ is known as a \textit{symmetric pair}.  The subgroup $K$ also acts with finitely many orbits on the flag variety $G/B$~\cite{Matsuki-79}.  Moreover, natural combinatorial indexing sets for the orbits have been determined in many cases.  The geometry of $K$-orbits and their closures are important in the representation theory of the real forms $G_{\R}$ of $G$.

In one special case, the relationship between the geometry of the $K$-orbit and the combinatorics of its indexing element is well understood.  Let our group be $G\times G$ and our involution $\theta$ be given by $\theta(x,y)=(y,x)$.  In this case, the symmetric pair is $(G\times G, G)$, and the action is the diagonal action of $G$ on $G/B\times G/B$.  The orbit closures are fiber bundles with Schubert variety fibers over the smooth base $G/B$, so many geometric properties of these orbit closures can be understood by reference to analogous results for Schubert varieties.  Hence we refer to this case loosely as ``the Schubert case''.

However, outside of the Schubert case, the interaction between the geometry of the $K$-orbit closure and the combinatorics of the indexing set has been less well studied.  We know only of a few recent results in this direction:
\begin{itemize}
	\item Smoothness and rational smoothness have been characterized by McGovern and McGovern--Trapa in terms of pattern avoidance for various symmetric pairs \cite{McGovern-09a-arXiv,McGovern-09b,McGovern-11,McGovern-Trapa-09}.
	\item Graph-theoretic criteria for rational smoothness have been given by A. Hultman \cite{Hultman-12} for pairs $(G,K)$ satisfying fairly strong hypotheses --- these apply in particular to the symmetric pair $(G,K) = (GL(2n,\C),Sp(2n,\C))$.
\end{itemize}
To the authors' knowledge, no explicit description of the (rationally) singular locus of an orbit closure has been given in any of these cases, nor have any combinatorial criteria been given to categorize $K$-orbit closures with respect to more subtle singularity properties, such as Gorensteinness or lci-ness, for any symmetric pair.

Aside from these purely geometric questions, in the Schubert case, one can hope for combinatorial descriptions of Kazhdan--Lusztig (KL) polynomials.  In 1979, Kazhdan and Lusztig \cite{KL-79} introduced this family of polynomials $P_{v,w}(q)$, where $v$ and $w$ are elements of a Coxeter group $W$.  Defined by a recursion on $W$ or alternatively by certain axioms on elements of the Hecke algebra, these polynomials carry important information about the representation theory of both semisimple complex reductive groups and Coxeter groups.  Geometrically, in the case where $W$ is a Weyl group, their coefficients are dimensions of local intersection (co)homology groups of Schubert varieties, so in particular they are nonnegative.  (A proof for this nonnegativity that does not rely on the geometric interpretation, and hence holds generally for all Coxeter groups, was only recently given by Elias and Williamson~\cite{Elias-Williamson}.)  There have been numerous papers giving various combinatorial formulas for various classes of Kazhdan--Lusztig polynomials; some such papers are~\cite{LS-81, Lascoux-95, Billera-Brenti, Billey-Warrington-01, Jones-09}.

Inspired by questions in the representation theory of real reductive groups, Vogan and Lusztig--Vogan \cite{Vogan-83,LV-83} defined a more general family of polynomials associated to a symmetric pair $(G,K)$, now known as Kazhdan--Lusztig--Vogan (KLV) polynomials $P_{\tau,\gamma}(q)$.  (Indeed, the KLV polynomials for the Schubert case are the ordinary KL polynomials.)

In the most general case, $\tau$ and $\gamma$ are pairs, each consisting of a $K$-orbit $Q_{\tau}$ (or $Q_{\gamma}$) on $G/B$ together with a $K$-equivariant local system on the orbit.  In this paper, matters are simplified by the fact that for the pair $(G,K)$ that we consider, each orbit admits only the trivial $K$-equivariant local system.  Thus the reader may think of our KLV polynomials as being of the form $P_{\tau,\gamma}(q)$ where $Q_{\tau}$ and $Q_{\gamma}$ are $K$-orbits.

As with ordinary KL polynomials, the KLV polynomials can be defined by a recursion on the indexing set for local systems or alternatively by certain axioms on elements of a particular module over the Hecke algebra.  Like the ordinary KL-polynomials, their coefficients have a geometric interpretation as the dimensions of local intersection (co)homology groups of $K$-orbit closures, so in particular they are non-negative.  However, the authors know of only one previous result~\cite{Collingwood-92} implying a formula for KLV polynomials for a very special case where the $K$-orbit is a Schubert variety.

In this paper, we answer various combinatorial questions of the above flavor for certain $K=GL(p,\C) \times GL(q,\C)$-orbit closures on $GL(p+q,\C)/B$.  Using recent work of the second author \cite{Wyser-12b} identifying a certain subset of the $K$-orbit closures (those ``whose clans avoid $(1,2,1,2)$" --- this is explained in Section \ref{sec:preliminaries}) as certain Richardson varieties, we recover most of the results of \cite{McGovern-09a-arXiv} regarding pattern avoidance criteria for rational smoothness.  We also give combinatorial characterizations describing which orbit closures are lci and which are Gorenstein.  We moreover describe a diagrammatic procedure to calculate the singular locus, the non-lci locus, and the non-Gorenstein locus of a $(1,2,1,2)$-avoiding orbit closure.

The remaining $K$-orbit closures --- those whose clans \textit{contain} $(1,2,1,2)$ --- are more difficult to study.  They are not Richardson varieties but are instead cut out from Richardson varieties by certain projection conditions \cite[Theorem 2.5]{Wyser-13}.   (Equivalently, they can be described as the intersections of Richardson varieties and certain Hessenberg varieties.)  Therefore, the techniques used in this paper do not apply directly to them.  Nonetheless, it is natural to wonder about combinatorial characterizations of lci-ness and Gorensteinness of these more complicated orbit closures.  (The question of (rational) smoothness is understood; see Proposition \ref{prop:1212-rationally-singular}.)  We do not give complete answers to these questions, but we do briefly describe ways to approach them computationally, giving some partial evidence and conjectures based on our own experiments.

Next, combining the aforementioned work of the second author, recent work of A. Knutson, the first author, and A. Yong \cite{Knutson-Woo-Yong-13} reducing local questions on Richardson varieties to similar questions on Schubert varieties, and an old result of Lascoux and Sch\"utzenberger~\cite{LS-81} giving Kazhdan--Lusztig polynomials for cograssmannian Schubert varieties, we give explicit combinatorial formulas for the KLV polynomials associated to pairs of orbits $\tau,\gamma$, where $\gamma$ is an orbit whose clan avoids $(1,2,1,2)$.   This subsumes as a special case a result of Collingwood~\cite{Collingwood-92}.

We remark briefly on a possible extension to the results of this paper.  The symmetric pairs $(G,K) = (Sp(2n,\C),GL(2n,\C))$ and $(SO(2n,\C),GL(n,\C))$ are like $(GL(p+q,\C),GL(p,\C) \times GL(q,\C))$ in that, in each case, $K$ is the Levi subgroup of a minuscule parabolic subgroup of $G$.  This fact implies that a number of the $K$-orbit closures in those cases also coincide with Richardson varieties.  The second author has explicitly identified the $K$-stable Richardson varieties in these cases~\cite{Wyser-11b}.  (They again correspond to ``$(1,2,1,2)$-avoiding clans.")  The singular locus of minuscule Schubert varieties is described in \cite{LW-90}, so one can in principle recover some of the pattern avoidance results of \cite{McGovern-Trapa-09} for the case $(SO(2n,\C),GL(n,\C))$ and give similar results for the pair $(Sp(2n,\C),GL(n,\C))$.  Additionally, the Gorenstein locus of such Schubert varieties are described in \cite{Perrin-09}, so one should also be able to characterize Gorensteinness of $(1,2,1,2)$-avoiding orbit closures in these cases.  Finally, KL polynomials for minuscule Schubert varieties were given by B. Boe in~\cite{Boe-88}, so one can also use our methods to write down explicit formulas for some of the KLV polynomials for these symmetric pairs.  Finding these results will require understanding the translation between the combinatorics of (co)minuscule quotients of Weyl groups on the one hand and the combinatorics of the clan parameterization for these symmetric pairs on the other.  We will not attempt to make these additional translations explicit in this paper.

\section{Preliminaries}\label{sec:preliminaries}
\subsection{Notation and conventions}
In this paper, $G$ will denote the group $GL(n,\C)$.  We use $K$ to denote the symmetric subgroup $GL(p,\C) \times GL(q,\C)$.  Note that $K=G^{\theta}$ for $\theta = \text{int}(I_{p,q})$, where
\[  I_{p,q} := 
\begin{pmatrix}
I_p & 0 \\
0 & -I_q \end{pmatrix} \]
and where $\text{int}(g)$ denotes conjugation by $g$.  Realized in this way, $K$ is embedded in $G$ as the subgroup of block-diagonal matrices consisting of an upper-left invertible $p \times p$ block, a lower-right invertible $q \times q$ block, and zeros elsewhere.

$B$ and $B^-$ will denote the opposite Borel subgroups of $G$ consisting of upper and lower-triangular matrices, respectively, while $T = B \cap B^-$ will be the diagonal maximal torus of $G$.  The flag variety is isomorphic to $G/B$, with the coset $gB$ corresponding under this isomorphism to the complete flag whose $i$th subspace is the linear span of the first $i$ columns of $g$.

Let $u \in S_n$ be given as a permutation matrix in $G$.  For us, the \textbf{Schubert cell}, denoted $X_0^u$, will be the $B^-$-orbit $B^- uB/B$ of the $T$-fixed point $uB$.  The corresponding \textbf{Schubert variety} $X^u$ is the closure $\overline{X_0^u}$.  The Schubert variety $X^u$ is a subvariety of $G/B$ of complex codimension $l(u)$, where $l$ denotes the Coxeter length function on $S_n$.

Similarly, the \textbf{opposite Schubert cell} $X_u^0$ will be the $B$-orbit $BuB/B$, while the \textbf{opposite Schubert variety} $X_u$ will be the closure $\overline{X_u^0}$.  The opposite Schubert variety $X_u$ is a subvariety of $G/B$ of complex dimension $l(u)$.  (Note that many papers reverse our definitions of Schubert and opposite Schubert cells and varieties.)

Left multiplication by the longest element $w_0\in S_n$ gives an isomorphism between the opposite Schubert variety $X_u$ and the Schubert variety $X^{w_0u}$.  Also, given $u\in S_n$, we will sometimes refer to the point $uB/B$ simply as $u$ or as $p_u$.

\subsection{Combinatorial parameters for $K=GL(p,\C) \times GL(q,\C)$-orbits on $GL(p+q,\C)/B$}\label{ssec:clans}
The finitely many $K$-orbits on $G/B$ are customarily indexed by \textit{$(p,q)$-clans}, as described in, for example, \cite{Matsuki-Oshima-90,Yamamoto-97,McGovern-Trapa-09}.  We now recall the details of this indexing.
\begin{definition}
A \textbf{$(p,q)$-clan} is a string $\gamma=(c_1, \hdots ,c_n)$ of $n=p+q$ symbols, each of which is a $+$, a $-$, or a natural number.  The string must satisfy the following two properties:
\begin{enumerate}
	\item Every natural number which appears must appear exactly twice in the string.
	\item The difference between the number of plus signs and the number of minus signs in the string must be $p-q$.  (If $q > p$, then there should be $q-p$ more minus signs than plus signs.)
\end{enumerate}
\end{definition}

We only consider such strings up to an equivalence relation saying that only the positions of matching natural numbers, rather than the actual values of the numbers, are necessary to determine the clan.  For instance, the clans $(1,2,1,2)$, $(2,1,2,1)$, and $(5,7,5,7)$ are all the same, since they all have matching natural numbers in positions $1$ and $3$ and also in positions $2$ and $4$.  On the other hand, $(1,2,2,1)$ is a different clan, since it has matching natural numbers in positions $1$ and $4$ and in positions $2$ and $3$.

A theorem of \cite{Matsuki-Oshima-90}, elaborated upon in \cite{Yamamoto-97}, gives an explicit bijection between the set of $(p,q)$-clans and the set of $K$-orbits on $G/B$:

\begin{theorem}[\cite{Matsuki-Oshima-90,Yamamoto-97}]\label{thm:orbit_description}
There is an explicit bijection between the set of $(p,q)$-clans and the $K$-orbits on $G/B$.
\end{theorem}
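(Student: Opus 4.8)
The plan is to establish the bijection by explicitly constructing a map from $(p,q)$-clans to $K$-orbits on $G/B$ and verifying it is a bijection, following the approach of Matsuki--Oshima and Yamamoto. The key observation is that the flag variety $G/B$ parametrizes complete flags $V_\bullet = (0 \subset V_1 \subset \cdots \subset V_n = \C^n)$, and $K = GL(p,\C) \times GL(q,\C)$ acts by preserving a fixed decomposition $\C^n = E_+ \oplus E_-$ with $\dim E_+ = p$, $\dim E_- = q$. Let $\pi_\pm$ denote the projections onto $E_\pm$. Given a flag $V_\bullet$, the $K$-orbit of $V_\bullet$ is determined by the discrete data recording, for each $i$, the dimension $\dim(V_i \cap E_+)$ (equivalently $\dim \pi_-(V_i)$), together with finer relative-position information between $V_i$ and the subspaces $E_+, E_-$. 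First I would make precise how a clan $\gamma = (c_1, \ldots, c_n)$ encodes this: reading left to right, a $+$ in position $i$ signals that $\dim(V_i \cap E_+)$ jumps (relative to the generic behavior), a $-$ signals $\dim(V_i \cap E_-)$ jumps, and a matched pair of equal natural numbers in positions $i < j$ records that a new line enters $V_i$ that is ``tilted'' (neither in $E_+$ nor $E_-$) and becomes resolved at step $j$.

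**Next I would carry out** the construction of a representative flag for each clan and show it lies in a single $K$-orbit. The standard device is to build the flag one step at a time: process the symbols $c_1, \ldots, c_n$ in order, maintaining a partial flag, and at step $i$ adjoin a vector according to the rule that $+$ adjoins a standard basis vector of $E_+$, $-$ adjoins a standard basis vector of $E_-$, and the first occurrence of a number adjoins a vector of the form $e_j + f_k$ (one basis vector from each of $E_+, E_-$), while the second occurrence of that number adjoins the complementary combination $e_j - f_k$ (or simply the matching partner needed to ``close off'' the pair). The constraint that each number appears exactly twice guarantees the construction terminates with a full flag, and the parity constraint (the difference between the number of $+$'s and $-$'s equals $p-q$) guarantees the resulting flag uses exactly $p$ vectors spanning $E_+$ and $q$ spanning $E_-$ in aggregate, so the representative is well-defined. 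Two clans related by the equivalence (relabeling matched numbers) manifestly give the same flag up to reordering within a single adjoined step, hence the same orbit.

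**Then I would prove injectivity and surjectivity.** For injectivity, I would show that from any flag $V_\bullet$ one can \emph{recover} its clan by reading off the sequence of relative-position invariants: the ranks $r_i^\pm := \dim(V_i \cap E_\pm)$ and $\rho_i := \dim \pi_+(V_i)$, $\dim \pi_-(V_i)$, whose increments at each step $i$ fall into exactly the three cases ($+$, $-$, or ``opening/closing a numbered pair''), and this reconstruction is a two-sided inverse to the construction above up to clan equivalence. Surjectivity then follows because every $K$-orbit contains a flag, hence has an associated clan, and the orbit is recovered from the clan by the construction. For counting purposes, one can alternatively verify bijectivity by checking that the number of $(p,q)$-clans equals the number of $K$-orbits, e.g.\ via the generating-function identity of Matsuki--Oshima or via Bruhat-order / $W$-set considerations; but the explicit inverse map is more informative and is what later sections of the paper rely on.

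**The main obstacle** I expect is the bookkeeping needed to verify that the relative-position data attached to a flag takes exactly the form prescribed by a clan --- that is, that the discrete invariants $(r_i^+, r_i^-, \dim\pi_\pm(V_i))$ are not independent but are governed by a single left-to-right ``stack'' structure (open numbers waiting to be closed) --- and, dually, that flags with the same invariants genuinely lie in one $K$-orbit. This latter point is the crux: it requires showing that $K$ acts transitively on flags with fixed relative position, which is essentially a normal-form argument (Gaussian elimination adapted to the $E_+ \oplus E_-$ decomposition, using that one may rescale and add within $E_+$ and within $E_-$ freely but not mix them). This is exactly the content of \cite{Matsuki-Oshima-90} and \cite{Yamamoto-97}, so in the paper I would invoke those references for the detailed verification rather than reproduce it, and simply indicate the shape of the bijection as above since the explicit form of the clan-to-flag correspondence is what is used in Section~\ref{sec:preliminaries} and beyond.
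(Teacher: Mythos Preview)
The paper does not prove this theorem; it is stated with attribution to \cite{Matsuki-Oshima-90,Yamamoto-97} and no proof is given. What follows the theorem statement in the paper is not a proof but rather a description of Yamamoto's algorithm for producing an explicit representative flag of each orbit $Q_\tau$, which the paper needs for later use.

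Your sketch is a reasonable outline of how the cited references establish the bijection, and in particular your ``construction of a representative flag'' step is essentially the Yamamoto algorithm the paper recounts immediately after the theorem (though the paper's version adjoins $e_{v(i)}$ for the second occurrence rather than a difference $e_j - f_k$). Your final paragraph correctly identifies that the substantive content---transitivity of $K$ on flags with fixed relative-position invariants---is what one must cite the original sources for, and indeed that is exactly what the paper does. So there is nothing to compare: the paper simply invokes the references, and your proposal goes further than the paper by sketching what those references contain.
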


For each $(p,q)$-clan $\tau$, we need an explicit point of the corresponding orbit $Q_{\tau}$.  We now outline an algorithm, described in \cite{Yamamoto-97}, which produces certain such representatives, which we call {\bf Yamamoto points} of $Q_{\tau}$.

First, for each pair of matching natural numbers of $\tau$, we assign one of the numbers a ``signature" of $+$, and the other a signature of $-$.  We next choose a permutation $v$ of $1,\hdots,n$ whose one-line notation places $1,\hdots,p$ (in any order) in the positions of the $+$ signs and numbers assigned a signature of $+$, and $p+1,\hdots,n$ (in any order) in the remaining positions.

Having determined such a permutation $\sigma$, let $F_{\bullet} = \left\langle v_1, \hdots, v_n \right\rangle$ to be the flag specified as follows:
\[ v_i = 
\begin{cases}
	e_{v(i)} + e_{v(j)} & \text{ if $c_i \in \N$, $c_i$ has signature $+$, and $c_i = c_j$}, \\
	e_{v(i)} & \text{ otherwise.}
\end{cases} \]

Then $F_{\bullet} \in Q_{\tau}$, the $K$-orbit corresponding to the clan $\tau$.

Note that the algorithm described above allows for several choices.  We describe a particularly natural scheme for these choices.  To each pair of numbers, assign the first a signature of $+$ and the second a signature of $-$.  Then choose $v$ to be the permutation whose one-line notation places the numbers $1,\hdots,p$ in ascending order on the positions of the $+$ signs and the first occurrences of natural numbers, and whose one-line notation places the numbers $p+1,\hdots,n$, also in ascending order, on the remaining positions.

\begin{definition}
We call the Yamamoto point of $Q_{\tau}$ obtained using this choice of permutation $v$ the \textbf{distinguished} representative of $Q_{\tau}$.
\end{definition}

We give two examples.  For the $(3,3)$-clan $\tau=(+,+,+,-,-,-)$, the permutation $v=123456$, and the distinguished representative is the standard flag,
\[ \left\langle e_1,\hdots,e_6 \right\rangle. \]

Now, let $\tau$ be the $(2,2)$-clan $(1,-,+,1)$.  The permutation $v$ is $1324$, and the distinguished representative is
\[ \left\langle e_1 + e_4, e_3, e_2, e_4 \right\rangle. \]

Given a clan $\tau$, we now associate to it two Grassmannian permutations $v(\tau)$ and $u(\tau)$.  The permutation $v(\tau)$ is the inverse of the permutation $v$ which we have just described.  Explicitly, its one-line notation is formed by first listing in ascending order the positions of $\tau$ containing a $+$ or the first occurrence of a number, then listing in ascending order the positions with a $-$ or the second occurrence of a number.

The permutation $u(\tau)$ is obtained in a similar way.  Its one-line notation is formed by first listing in ascending order the positions of $\tau$ which have a $+$ or the \textit{second} occurrence of a number, followed by listing in ascending order the positions with a $-$ or the \textit{first} occurence of a number.

For example, if $\tau=(1,2,+,-,1,2)$, then $v(\tau) = 123456$, and $u(\tau) = 356124$.  If $\tau=(1,2,2,3,3,1)$, then $v(\tau) = 124356$ and $u(\tau) = 356124$.

Now, let $u$ be the permutation obtained from $v=v(\tau)^{-1}$ and $\tau$ as follows.  For every pair of matching natural numbers $c_i = c_j \in \N$ of $\tau$, interchange the entries of the one-line notation for $v$ in positions $i$ and $j$.  Returning to the two examples above, if $\tau=(1,2,+,-,1,2)$, then $v=123456$, while $u=563412$.  If $\tau=(1,2,2,3,3,1)$, then $v=124356$, and $u=642531$.

Then we have the following easy result.

\begin{lemma}\label{lem:t-fixed-representative}
Let $\tau$ be a $(p,q)$-clan, and let $F_{\bullet}$ be the distinguished representative of $Q_{\tau}$.  Let $u,v$ be as just defined.  Then $F_{\bullet}$ is in the $B^-$-orbit of the $T$-fixed point $vB/B$ and the $B$-orbit of the $T$-fixed point $uB/B$.  In other words, $F_{\bullet} \in X_u^0 \cap X_0^v$.

Moreover, although $u^{-1} \neq u(\tau)$ in general, $u^{-1}$ is in the same left $W_K = S_p \times S_q$-coset as $u(\tau)$.
\end{lemma}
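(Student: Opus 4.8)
The plan is to reduce both assertions to explicit triangular factorizations of a single matrix. Let $g \in GL(n,\C)$ be the matrix whose $k$th column is the $k$th vector of the distinguished flag $F_\bullet$, so that $F_\bullet = gB/B$, and for $w \in S_n$ write $P_w$ for the permutation matrix with $P_w e_m = e_{w(m)}$. Then $F_\bullet \in X_0^v$ and $F_\bullet \in X_u^0$ are equivalent to $g \in B^- P_v B$ and $g \in B P_u B$, respectively, i.e.\ to the existence of suitable triangular factors. I will use two structural facts. First, by the recipe for the distinguished representative, the $k$th column of $g$ is $e_{v(k)}$, except when $c_k \in \N$ is a first occurrence matched at some $k' > k$, in which case it is $e_{v(k)} + e_{v(k')}$. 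Second, $v = v(\tau)^{-1}$ carries the positions of $\tau$ holding a $+$ or a first occurrence bijectively onto $\{1,\ldots,p\}$ and the remaining positions bijectively onto $\{p+1,\ldots,n\}$; in particular, for a matched pair $i<j$ one has $v(i) \le p < v(j)$.

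For the first assertion I would set $L := I + \sum e_{v(j)} e_{v(i)}^{T}$, the sum over matched pairs $i<j$. Since $v(i) \le p < v(j)$, and since distinct pairs produce distinct indices with the "lower" and "upper" index sets disjoint, $L$ is unipotent lower triangular, hence $L \in B^-$. A column-by-column comparison then gives $g = L P_v$: the $k$th column of $L P_v$ is $L e_{v(k)}$, which equals $e_{v(k)}$ unless $k$ is a first occurrence matched at $k'$, in which case it equals $e_{v(k)} + e_{v(k')}$ --- precisely the $k$th column of $g$. Hence $g B/B = L \cdot (vB/B)$ with $L \in B^-$, so $F_\bullet$ lies in the $B^-$-orbit of $vB/B$, which is $X_0^v$.

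For the second assertion I would set $L' := I - \sum e_{v(i)} e_{v(j)}^{T}$ and $R' := I - \sum e_i e_j^{T}$, both sums over matched pairs $i<j$. Since $v(i) \le p < v(j)$, since $i<j$, and since no position of a clan is at once a first and a second occurrence, both $L'$ and $R'$ are unipotent upper triangular, hence lie in $B$. I then claim $L' g R' = P_u D$, where $D$ is the diagonal matrix with entry $-1$ in each second-occurrence position and $1$ elsewhere (so $D \in B$); this is checked columnwise. A $+$ or $-$ column of $g$ is already $e_{v(k)} = e_{u(k)}$ and is untouched by $L'$ and $R'$. For a matched pair $i<j$, left multiplication by $L'$ converts the $i$th column $e_{v(i)} + e_{v(j)}$ into $e_{v(j)} = e_{u(i)}$ and the $j$th column $e_{v(j)}$ into $e_{v(j)} - e_{v(i)}$; right multiplication by $R'$ then subtracts the (now corrected) $i$th column from the $j$th, leaving $-e_{v(i)} = -e_{u(j)}$. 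Consequently $g = (L')^{-1} P_u \bigl( D (R')^{-1} \bigr)$ with both factors in $B$, so $g \in B P_u B$ and $F_\bullet$ lies in the $B$-orbit of $uB/B$, which is $X_u^0$.

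Finally, for the coset claim: the description of $u$ in terms of $v$, together with the second structural fact, shows that $u$ restricts to a bijection from the set $S$ of positions of $\tau$ holding a $+$ or a second occurrence onto $\{1,\ldots,p\}$ (so $|S| = p$), and from the complement onto $\{p+1,\ldots,n\}$; hence $u^{-1}$ carries $\{1,\ldots,p\}$ onto $S$. By the definition of $u(\tau)$, that permutation also carries $\{1,\ldots,p\}$ onto $S$. Therefore $u(\tau)^{-1} u^{-1}$ preserves the partition $\{1,\ldots,p\} \sqcup \{p+1,\ldots,n\}$, so it lies in $W_K = S_p \times S_q$, whence $u^{-1} \in u(\tau) W_K$ --- the same left $W_K$-coset as $u(\tau)$ --- while the examples computed just before the lemma show that the equality $u^{-1} = u(\tau)$ need not hold. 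The part that requires care is the bookkeeping in the two middle paragraphs: one must confirm that the elementary operations indexed by distinct matched pairs act on disjoint rows and columns, so that $L$, $L'$, $R'$ are genuinely triangular and the products collapse to $P_v$, respectively $P_u D$, exactly. This is a finite verification, carried out pair by pair.
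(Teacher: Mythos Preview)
Your proof is correct and follows essentially the same approach as the paper's: both arguments exhibit the Bruhat factorizations of the distinguished representative by explicit elementary row and column operations indexed by the matched pairs of $\tau$. Your version is somewhat more careful --- you write down the triangular factors $L$, $L'$, $R'$ explicitly and keep track of the sign matrix $D$, whereas the paper describes the operations verbally and is informal about signs --- and you perform the left and right $B$-moves in the opposite order in the second part, but the underlying idea is identical.
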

\begin{proof}
Let $\tau=(\tau_1,\hdots,\tau_n)$.  The flag $F_{\bullet}$ is of the form $gB$, where $g$ is the matrix whose columns are the vectors $v_i$ given by the Yamamoto algorithm above.  Evidently, $g$ is almost the permutation matrix with $1$'s in positions $(v(i),i)$, except that in each column corresponding to an index $i$ such that $\tau_i \in \N$ is a first occurrence, there is an extra $1$ in row $v(j)$, where $\tau_i = \tau_j$.  Note that by our choice of $v$, $v(j) > v(i)$, so the $1$ in position $(v(j),i)$ occurs further down in column $i$ than that in position $(v(i),i)$.  Thus, using the left $B^-$-action by downward row operations, we may eliminate these extra $1$'s, giving the point $vB/B$.

For the second claim, we first change our matrix representative for $F_\bullet$ (or equivalently present some of the vector spaces in $F_\bullet$ with a different basis).  Using the right action of $B$ by rightward column operations, we can first eliminate the second $1$ from any row consisting of two $1$'s.  There is one such row for each pair of matching natural numbers $\tau_i = \tau_j \in \N$ ($i<j$), namely row $u(i)$.  The effect of such a column operation is to eliminate the second $1$ in position $(u(i),j)$ and move it instead (up) to position $(u(j),j)$.  As a result, row $u(j)$ now has two $1$'s, one at position $(u(j),i)$ and the other at position $(u(j),j)$, whereas row $u(i)$ has only one $1$ in position $(u(i),i)$.

Now, using the left $B$-action by upward row operations on this new representative, we can eliminate the additional $1$ in position $(u(j),i)$.  Doing so for all pairs $\tau_i = \tau_j$ gives the point $uB/B$.

For the last claim, note that by construction, $u$ has a one-line notation in which $1,\hdots,p$ (in some order) are on the $+$'s and second occurrences, and $p+1,\hdots,n$ (in some order) are on the $-$'s and first occurrences.  Left-multiplying $u$ by some element of $S_p \times S_q$ will put both sets in ascending order.  We then take the inverse to obtain the desired result.
\end{proof}

There is a natural notion of pattern avoidance for $(p,q)$-clans, used first by McGovern in \cite{McGovern-09a-arXiv}.  We say that one clan $\gamma$ \textbf{contains} another clan $\gamma'$ if there are character positions within $\gamma$ which, when extracted from $\gamma$ in order, give a clan equivalent to $\gamma'$, where the equivalence is, as described above, up to permutation of the natural numbers.  We say that $\gamma$ \textbf{avoids} the pattern $\gamma'$ if it does not contain it.  In particular, note that $\gamma$ ``avoids the pattern $(1,2,1,2)$" if any two pairs of matching natural numbers of $\gamma$ are either nested or disjoint.  So, for example, $(1,1,2,2,3,3)$ avoids $(1,2,1,2)$, but $(1,2,1,3,2,3)$ does not.  
  
The first main theorem that we use in our combinatorial analysis of some of the $K$-orbit closures is a result of \cite{Wyser-12b} identifying $(1,2,1,2)$-avoiding $K$-orbit closures explicitly as certain Richardson varieties.  Associated to a $(1,2,1,2)$-avoiding clan $\gamma$, we have the Grassmannian permutations (each with at most one descent at position $p$) $u(\gamma)$ and $v(\gamma)$ defined a few paragraphs ago.  Then we have the following theorem.

\begin{theorem}[{\cite[Theorem 3.8 \& Remark 3.9]{Wyser-12b}}]\label{thm:richardson-theorem}
Given any $(p,q)$-clan $\gamma$ avoiding the pattern $(1,2,1,2)$, let $Q_{\gamma}$ be the associated $K$-orbit.  Let $u = w_0^K u(\gamma)^{-1}$, and let $v = v(\gamma)^{-1}$, where $w_0^K$ denotes the long element of $W_K = S_p \times S_q$ which reverses the sets $1,\hdots,p$ and $p+1,\hdots,n$.  Then $\overline{Q_{\gamma}}$ is the Richardson variety $X_u^v = X_u \cap X^v$.
\end{theorem}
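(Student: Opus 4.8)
The plan is to show that $\overline{Q_\gamma}$ and the Richardson variety $R := X_u \cap X^v$ coincide by checking that $\overline{Q_\gamma}$ is a closed irreducible subvariety of $R$ of full dimension. Concretely I would proceed in three steps: (i) verify $v \le u$ in Bruhat order, so that by Richardson's theorem $R$ is irreducible of dimension $l(u) - l(v)$ and equals the closure of the open cell $X_u^0 \cap X_0^v$; (ii) prove the set-theoretic containment $\overline{Q_\gamma} \subseteq R$; and (iii) compute $\dim Q_\gamma = l(u) - l(v)$. Since $Q_\gamma$ is irreducible ($K$ is connected), steps (ii) and (iii) together with the irreducibility of $R$ force $\overline{Q_\gamma} = R$.

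For step (iii) I would evaluate both sides as explicit functions of the clan $\gamma$. The dimension of $Q_\gamma$ is a standard clan statistic, which one can also read off as $\dim K - \dim \mathrm{Stab}_K(F_\bullet)$ for the distinguished representative $F_\bullet$ of Lemma \ref{lem:t-fixed-representative}. On the Richardson side, because $u(\gamma)$ and $v(\gamma)$ are Grassmannian with descent at $p$, the permutation $u(\gamma)$ is the minimal-length element of $u(\gamma)W_K$, equivalently $u(\gamma)^{-1}$ is minimal in $W_K u(\gamma)^{-1}$, so that $l(w_0^K u(\gamma)^{-1}) = l(w_0^K) + l(u(\gamma)^{-1}) = \binom{p}{2} + \binom{q}{2} + l(u(\gamma))$. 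Thus $\dim R = l(u) - l(v) = \binom{p}{2} + \binom{q}{2} + l(u(\gamma)) - l(v(\gamma))$, and matching this against the clan statistic is a routine count over the positions of the $+$'s, $-$'s, and first/second occurrences of numbers in $\gamma$.

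The heart of the argument is step (ii), i.e. the two containments $\overline{Q_\gamma} \subseteq X_u$ and $\overline{Q_\gamma} \subseteq X^v$. Here I would use that $\overline{Q_\gamma}$ is cut out inside $G/B$ by the evident incidence (rank) conditions relating a flag $(F_1 \subset \cdots \subset F_n)$ to the two coordinate subspaces $E^+ = \langle e_1, \dots, e_p \rangle$ and $E^- = \langle e_{p+1}, \dots, e_n \rangle$ (the general shape of the closure of any orbit of a reductive group acting with finitely many rank-variety orbits, and the standard way these orbit closures are described). The key point is that when $\gamma$ avoids $(1,2,1,2)$, so that its matching pairs of numbers are pairwise nested or disjoint, these conditions decouple into a family of ordinary Schubert rank conditions with respect to the standard flag $E^{\mathrm{std}}_\bullet$ (whose $p$-th step is $E^+$) together with opposite Schubert rank conditions with respect to the anti-standard flag (whose $q$-th step is $E^-$); one then checks, using the explicit $u(\gamma), v(\gamma)$ and the representative $F_\bullet$ (which already certifies, by Lemma \ref{lem:t-fixed-representative}, that $v = v(\gamma)^{-1}$ is attained), that the tightest such conditions are exactly those defining $X_u$ and $X^v$. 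This decoupling is precisely what the non-crossing hypothesis provides: for a clan containing $(1,2,1,2)$ it genuinely fails, and one is forced into the extra projection (Hessenberg-type) conditions of \cite[Theorem 2.5]{Wyser-13}. An alternative route is induction on the non-crossing structure of $\gamma$: one peels off $c_1$, which is either a sign (reducing to a $(p-1,q)$- or $(p,q-1)$-clan via a projective-bundle argument) or the start of a pair $c_1 = c_j$ that, by non-crossing, splits $\gamma$ into two smaller sub-clans; the base case is the sign-only clans, whose $K$-orbits are closed and manifestly isomorphic to products of flag varieties of the two blocks, hence are immediately identified as minimal Richardson varieties.

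I expect step (ii) to be the main obstacle — specifically, pinning down the generic rank data along $Q_\gamma$ and recognizing it as the rank data of the intersection of a Schubert variety and an opposite Schubert variety for exactly the permutations $u = w_0^K u(\gamma)^{-1}$ and $v = v(\gamma)^{-1}$. Steps (i) and (iii) are bookkeeping with Coxeter lengths and clan statistics, and once (ii) holds the equality $\overline{Q_\gamma} = R$ follows formally from irreducibility and the dimension count, with no need to verify the reverse inclusion $R \subseteq \overline{Q_\gamma}$ separately.
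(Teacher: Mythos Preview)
The paper does not actually prove this theorem: it is stated with attribution to \cite[Theorem 3.8 \& Remark 3.9]{Wyser-12b} and used thereafter as a black box, with no argument given in the present paper. So there is no in-paper proof to compare your proposal against.

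That said, your outline is a sound and standard strategy for results of this type, and it is close in spirit to how such identifications are typically established (including in \cite{Wyser-12b}): exhibit a point of $Q_\gamma$ in the open Richardson cell (Lemma~\ref{lem:t-fixed-representative} already does most of this for you, giving $F_\bullet \in X_u^0 \cap X_0^v$ for a $u$ in the correct $W_K$-coset and the correct $v$), then match dimensions and appeal to irreducibility. Two small cautions. First, be careful with the coset bookkeeping: Lemma~\ref{lem:t-fixed-representative} places $F_\bullet$ in $X_{u'}^0$ for some $u'$ with $(u')^{-1} \in W_K u(\gamma)$, and one needs the extra observation that the maximal-length representative of this coset is exactly $w_0^K u(\gamma)^{-1}$, so $X_{u'} \subseteq X_{w_0^K u(\gamma)^{-1}}$; this is what pins down the specific $u$ in the theorem. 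Second, your step (ii) via rank conditions is genuinely where the $(1,2,1,2)$-avoidance hypothesis enters, and you correctly identify it as the crux; the point-in-open-cell observation already gives $Q_\gamma \subseteq X_u^0 \cap X_0^v \subseteq R$ directly, without needing to unpack rank conditions, so in fact your step (ii) can be made essentially free once Lemma~\ref{lem:t-fixed-representative} is in hand and the coset issue above is sorted. The dimension check (iii) then reduces to the identity $\ell(\gamma) = l(u(\gamma)) - l(v(\gamma))$ for non-crossing clans, which is the routine count you describe.
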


\subsection{KL and KLV polynomials}\label{ssec:klv-defs}
In this section, we quickly recall the definitions --- first algebraic, then geometric --- of both the ordinary Kazhdan-Lusztig (KL) polynomials and the Kazhdan--Lusztig--Vogan (KLV) polynomials.  The latter polynomials were originally defined by Vogan \cite{Vogan-83} for a general symmetric pair $(G,K)$.  We give the general definition then explicitly explain how to calculate KLV polynomials for $(GL(n,\C), GL(p,\C) \times GL(q,\C))$ in terms of the combinatorics of clans.  This case is simpler than the general case but more complicated than the cases treated by Hultman~\cite[Sect. 5]{Hultman-12}.  Our hope is that the explicit description will be helpful to combinatorialists interested in understanding these polynomials.

\subsubsection{Combinatorial definitions}
First, recall the definition of the ordinary KL polynomials.  Given a Weyl group $W$ with simple reflections $S$, the Hecke algebra $\mathcal{H}_W$ is the $\Z[q^{\pm 1/2}]$-algebra with basis $T_w$ for $w\in W$ and multiplication defined by 
\[ T_sT_w=
\begin{cases}
T_{sw} & \text{if $sw > w$} \\
(q-1)T_w + qT_{sw} & \text{ if $sw < w$.}
\end{cases}
\]

The Hecke algebra has a ring involution defined by $\overline{q^{1/2}}=q^{-1/2}$ and $\overline{T_w}=(T_{w^{-1}})^{-1}$.  We can define the $R$-polynomials by
$$\overline{T_w}=q^{-\ell(w)}\sum_{v}(-1)^{\ell(v)}R_{v,w}(q)T_v;$$
if we do so, then $R_{v,w}(q)$ will be polynomials in $q$ with $R_{w,w}=1$ for all $w$ and $R_{v,w}=0$ whenever $v\not\leq w$.  Indeed, the involution can be defined as the unique ring homomorphism satisfying $\overline{q^{1/2}}=q^{-1/2}$ and this condition on $R_{v,w}$.

In \cite{KL-79}, Kazhdan-Lusztig showed that there exists a unique basis $\{C^\prime_w\}_{w\in W}$ satisfying the following:
\begin{enumerate}
\item $\overline{C^\prime_w}=C^\prime_w$ for all $w\in W$.
\item If we define $P_{x,w}(q)$ by 
\[ C^\prime_w=q^{-\ell(w)/2}\displaystyle\sum_{x \leq w} P_{x,w}(q)T_x, \]
then $P_{x,w}(q)$ is a uniquely determined polynomial in $q$, provided that we insist that
\begin{enumerate}
	\item $P_{w,w}(q)=1$, and
	\item $\deg(P_{x,w}) \leq \frac{1}{2}(\ell(w)-\ell(x)-1)$.
\end{enumerate}
\end{enumerate}

From these facts and the base cases $C^\prime_1=1$ and $C^\prime_s=q^{-1/2}(1+T_s)$ for all simple reflections $s$, one can recursively calculate the Kazhdan--Lusztig elements $C^\prime_w$ and hence the \textbf{Kazhdan-Lusztig (KL) polynomials} $P_{x,w}(q)$.  If $ws>w$, then
\[ C^\prime_wC^\prime_s=C^\prime_{ws}+\displaystyle\sum_{v<ws} E_v(q)C^\prime_v, \]
where $E_v(q)$ is explicitly either $0$ if $vs<v$ or else the coefficient of $q^{(\ell(w)-\ell(v)-1)/2}$ in $P_{v,w}(q)$.  However, the explicit description of $E_v(q)$ is not necessary, since we can recursively determine $E_v(q)$ purely from the degree bound (2b).  In particular, if the coefficients in $C^\prime_wC^\prime_s$ satisfy the degree bound (2b), then $E_v(q)=0$ for all $v$ and $C^\prime_{ws}=C^\prime_wC^\prime_s$.

KLV polynomials for a symmetric pair $(G,K)$ have a similar definition in terms of an $\caH_W$-module $\caM_K$.  Let $\mathcal{D}$ consist of all pairs $(Q,\delta)$, where $Q$ is a $K$-orbit on $G/B$ and $\delta$ is a $K$-equivariant local system on $Q$.  Since $Q$ is determined by the local system $\delta$, we will write $\delta$ to mean $(Q,\delta)$.  The module $\caM_K$ is free over $\Z[q^{\pm 1/2}]$ with basis $\{\mathbf{T}_\delta\}_{\delta\in\mathcal{D}}$.  We will not describe the action of $\caH_W$ on $\caM_K$ in general, but we later give an explicit description of this action in terms of clans when $K=GL(p,\C) \times GL(q,\C)$ and $G=GL(p+q,\C)$.

On the set $\caD$ there is a partial order called \textit{Bruhat $\caG$-order}, defined in \cite{Vogan-83}.  We indicate this order by $<$.  Like Bruhat order on $W$, Bruhat $\caG$-order on $\mathcal{D}$ is graded by a length function $\ell$.  (Bruhat $\caG$-order is roughly defined by inclusion of orbits, complicated by the possibility that multiple local systems can be associated to a single orbit.  The length $\ell(\delta)$ is the dimension of the orbit associated to $\delta$ minus the minimal dimension for all orbits.\footnote{The original definitions in~\cite{Vogan-83} defined length as simply the dimension of the orbit, but it is easy to see that adding a constant to all lengths has no effect as long as it is done consistently.  We use our definition both for simplicity and to agree with the Atlas of Lie Groups software.})  We can now define an involution on $\caM_K$ by requiring that
\begin{enumerate}
\item $\overline{h\cdot m}=\overline{h}\cdot\overline{m}$ for all $h\in\caH_W$ and all $m\in\caM_K$.
\item If we define $R_{\gamma,\delta}$ by 
\[ \overline{\mathbf{T}_\delta}=(-q^{-\ell(\delta)})\sum (-1)^{\ell(\gamma)}R_{\gamma,\delta}(q)\mathbf{T}_\delta, \]
then $R_{\gamma,\delta}(q)=0$ unless $\ell(\gamma)\leq\ell(\delta)$, and $R_{\delta,\delta}(q)=1$ for all $\delta$.
\end{enumerate}

Given the bar involution, the KLV polynomials $P_{\gamma,\delta}$ can be defined exactly as the KL polynomials are.  There is a unique basis $\{\mathbf{C}^\prime_\delta\}_{\delta\in\mathcal{D}}$ satisfying the following:
\begin{enumerate}
\item $\overline{\mathbf{C}'_\delta}=\mathbf{C}'_{\delta}$ for all $\delta\in\mathcal{D}$.
\item If we define $P_{\gamma,\delta}(q)$ by 
\[ \mathbf{C}'_\delta=q^{-\ell(\delta)/2}\sum_\tau P_{\gamma,\delta}(q)\mathbf{T}_\delta, \]
then $P_{\gamma,\delta}(q)$ is a uniquely determined polynomial in $q$, provided we insist that
\begin{enumerate}
	\item $P_{\delta,\delta}(q)=1$, and
	\item $\deg(P_{\gamma,\delta}) \leq \frac{1}{2}(\ell(\delta)-\ell(\gamma)-1)$.
\end{enumerate} 
\end{enumerate}

For many groups $K$ (including $K=GL(p,\C) \times GL(q,\C)$), one can also recursively compute $\mathbf{C}^\prime_\delta$ as for KL polynomials.  The base cases are given by $\mathbf{C}^\prime_\delta=\mathbf{T}_\delta$ whenever $\delta$ is a minimal element in Bruhat $\mathcal{G}$-order.  Otherwise, for each $\delta\in\mathcal{D}$, one finds $\tau\in\mathcal{D}$ with $\tau<\delta$ and a simple reflection $s\in W$ such that $\mathbf{T}_\delta$ is the unique maximal basis element (in Bruhat $\mathcal{G}$-order) occuring in the expansion of $C^\prime_s\mathbf{C}^\prime_\tau$.  Then one recursively computes 
\[ C^\prime_s\mathbf{C}^\prime_\tau=\mathbf{C}^\prime_\delta+\displaystyle\sum_{\gamma<\delta} E_\gamma(q)\mathbf{C}^\prime_\gamma, \]
where $E_\gamma(q)$ is explicitly either $0$ or a coefficient of $P_{\tau,\gamma}(q)$ depending on certain relations in Bruhat $\mathcal{G}$-order.  Again, $E_\gamma(q)$ can be recursively determined from the degree bound (2b), and in particular, if the coefficients in $C^\prime_s\mathbf{C}^\prime_\tau$ satisfy the degree bound (2b), then $E_\gamma(q)=0$ for all $\gamma$, and $\mathbf{C}^\prime_\delta=C^\prime_s\mathbf{C}^\prime_\tau$.

(Unfortunately, for some groups there are local systems $\delta$, not minimal in Bruhat $\mathcal{G}$-order, for which nevertheless no such $s$ and $\tau$ exist.  See~\cite{Atlas-computation-orig,Atlas-computation-improved} for an explanation of how to perform this calculation in that case as well as further details of this computation.  Unfortunately, they do not use the combinatorial language of clans.)

For the pair $(GL(p+q,\C),GL(p,\C) \times GL(q,\C))$, as mentioned above, matters are simplified substantially by the fact that each $K$-orbit admits only the trivial $K$-equivariant local system.  Thus each element of $\mathcal{D}$ can be thought of as simply a $K$-orbit, and the Bruhat $\mathcal{G}$-order amounts to inclusion of orbits in orbit closures.  We remark that for the pair $(SL(p+q,\C),S(GL(p,\C) \times GL(q,\C))$, the orbit set is precisely the same as for the pair $(GL(p+q,\C),GL(p,\C) \times GL(q,\C))$, 
but here some orbits \textit{do} admit non-trivial $K$-equivariant local systems if $p=q$.  In such
cases, our results still give the KLV polynomials $P_{\tau,\gamma}$ whenever $\tau$ and $\gamma$ are trivial local
systems on the corresponding orbits, but there are other KLV polynomials about which we cannot say anything.  (Note that $K$ and the geometry of its orbits depends on the specific form of the Lie group, so $(PGL(p+q,\C),P(GL(p,\C)\times GL(q,\C)))$ is yet another separate case with a different orbit set to which our results do not apply at all.) 

We now describe the action of $\caH_W$ on $\caM_K$ for the pair $(GL(p+q,\C),GL(p,\C) \times GL(q,\C))$ by describing $T_{s_i}\mathbf{T}_\gamma$ for each simple transposition $s_i$ and each clan $\gamma$.  Given a clan $\gamma=(\gamma_1,\hdots,\gamma_n)$, let $\gamma_i$ denote the $i$-th entry.  Also, let $\gamma\times s_i$ denote the clan which is obtained from $\gamma$ by switching $\gamma_i$ and $\gamma_{i+1}$.  Finally, we need a formula for the \textbf{length} of a clan, denoted $\ell(\gamma)$, which is given in~\cite{Yamamoto-97} as
$$\ell(\gamma)=\sum_{c_i=c_j\in\mathbb{N}, i<j} (j-i-\#\{k\in\mathbb{N}\mid c_s=c_t=k \text{ for some } s<i<t<j\}).$$

\begin{itemize}
\item[(compact imaginary)] If $\gamma_i=\gamma_{i+1}=+$, or $\gamma_i=\gamma_{i+1}=-$, then $T_{s_i} \mathbf{T}_\gamma=q \mathbf{T}_\gamma$.
\item[(noncompact imaginary)] If $\gamma_i$ and $\gamma_{i+1}$ are opposite signs, then $T_{s_i}\mathbf{T}_\gamma =\mathbf{T}_{\gamma'}+\mathbf{T}_{\gamma\times s_i}$, where $\gamma'$ is obtained from $\gamma$ by changing $\gamma_i$ and $\gamma_{i+1}$ to an unused natural number.
\item[(real)] If $\gamma_i$ and $\gamma_{i+1}$ are mates, then $T_{s_i} \mathbf{T}_\gamma=(q-2)\mathbf{T}_\gamma+(q-1)\mathbf{T}_{\gamma'}+(q-1)\mathbf{T}_{\gamma''}$, where $\gamma'$ and $\gamma''$ are obtained from $\gamma$ by changing $\gamma_i$ and $\gamma_{i+1}$ to one $+$ and one $-$ in either order.
\item[(complex ascent)] If we are not in the above cases, then $\ell(\gamma\times s_i)=\ell(\gamma)\pm1$.  If $\ell(\gamma\times s_i)=\ell(\gamma)+1$, then $T_{s_i}\mathbf{T}_\gamma=\mathbf{T}_{\gamma\times s_i}$.
\item[(complex descent)] If $\ell(\gamma\times s_i)=\ell(\gamma)-1$, then $T_{s_i}\mathbf{T}_\gamma=q\mathbf{T}_{\gamma\times s_i}+(q-1)\mathbf{T}_\gamma$.
\end{itemize}

The labels on the cases correspond to the classification of roots for a $\theta$-stable torus in \cite{Vogan-83}, and the multiplication rules are translations of the specific rules of~\cite[Lemma 3.5]{LV-83}.  The real and noncompact imaginary cases are ``type I'' in Vogan's classification; ``type II'' cases do not occur for $(GL(p+q,\C),GL(p,\C)\times GL(q,\C))$.  The reader may note that the rules in the cases of a complex ascent or descent are similar to the multiplication rules in the Hecke algebra, while the others do not occur in that case.

We can be more precise about distinguishing complex ascents from descents.  By the \textbf{mate} of a natural number entry of a clan, we mean the other entry with the same natural number.
\begin{itemize}
\item If $\gamma_i$ is a number and $\gamma_{i+1}$ is a sign, then $s_i$ is a complex ascent if the mate of $\gamma_i$ is to the left and a complex descent otherwise.
\item If $\gamma_i$ is a sign and $\gamma_{i+1}$ is a number, then $s_i$ is a complex ascent if the mate of $\gamma_{i+1}$ is to the right and a complex descent otherwise.
\item If $\gamma_i$ and $\gamma_{i+1}$ are different numbers, then $s_i$ is a complex ascent if the mate of $\gamma_i$ occurs to the left of the mate of $\gamma_{i+1}$.
\end{itemize}

Combinatorially speaking, it would be more appropriate to consider $\mathcal{M}_K$ as a right module rather than a left module as we have written above, since $s_i$ acts on clans by permuting positions, not entries (though it is not clear what permuting entries would mean).  It would also be more desirable from a geometric viewpoint (at least when considering $K$-orbits on $G/B$) to consider $\mathcal{M}_K$ as a right module, but we bow to historical convention.

As a simple example of our description of KLV polynomials, we now calculate $\mathbf{C}^\prime_{(1,2,1,2)}$.  We can use the recursion with
$$C^\prime_{s_2}\mathbf{C}^\prime_{(1,1,2,2)} = \mathbf{C}^\prime_{(1,2,1,2)}+\sum E_\gamma(q)\mathbf{C}^\prime_\gamma.$$
It will turn out that the coefficients of $C^\prime_{s_2}\mathbf{C}^\prime_{(1,1,2,2)}$ satisfy the degree bound, so $E_\gamma(q)=0$ for all $\gamma$.

We know that $$C^\prime_{s_2}=q^{-1/2}(T_{s_2}+T_1),$$ and, since the orbit closure for $(1,1,2,2)$ is smooth (or by further recursive calculation), we have that
\begin{align*}
\mathbf{C}^\prime_{(1,1,2,2)}&=q^{-2/2}(\mathbf{T}_{(1,1,2,2)}+\mathbf{T}_{(+,-,1,1)}+\mathbf{T}_{(-,+,1,1)}
+\mathbf{T}_{(1,1,+,-)} +\mathbf{T}_{(1,1,-,+)}\\
&+\mathbf{T}_{(+,-,+,-)}+\mathbf{T}_{(-,+,-,+)}+\mathbf{T}_{(+,-,-,+)} +\mathbf{T}_{(-,+,+,-)}.
\end{align*}

When multiplying these terms by $T_{s_2}$, we are in the compact imaginary case for $(+,-,-,+)$ and $(-,+,+,-)$, the noncompact imaginary case for $(-,+,-,+)$ and $(+,-,+,-)$, and the complex ascent case for the remaining terms.  Hence,
\begin{align*}
T_{s_2}(\mathbf{T}_{(+,-,-,+)}+\mathbf{T}_{(-,+,+,-)}) &= q(\mathbf{T}_{(+,-,-,+)}+\mathbf{T}_{(-,+,+,-)}), \\
T_{s_2}(\mathbf{T}_{(-,+,-,+)}+\mathbf{T}_{(+,-,+,-)}) &= (\mathbf{T}_{(-,+,-,+)}+\mathbf{T}_{(+,-,+,-)}
+\mathbf{T}_{(+,-,-,+)}+\mathbf{T}_{(-,+,+,-)}),
\end{align*}
and putting the entire product together,
\begin{align*}
C^\prime_{s_2}\mathbf{C}^\prime_{(1,1,2,2)}&=q^{-3/2} (\mathbf{T}_{(1,2,1,2)}+\mathbf{T}_{(+,1,-,1)}
+\mathbf{T}_{(-,1,+,1)}+\mathbf{T}_{(1,+,1,-)}\\
&+\mathbf{T}_{(1,-,1,+)}+\mathbf{T}_{(1,1,2,2)}+\mathbf{T}_{(+,-,1,1)} +\mathbf{T}_{(-,+,1,1)}+\mathbf{T}_{(1,1,+,-)} \\
&+\mathbf{T}_{(1,1,-,+)}+\mathbf{T}_{(+,-,+,-)}
+\mathbf{T}_{(-,+,-,+)}+(1+q)\mathbf{T}_{(+,-,-,+)}+(1+q)\mathbf{T}_{(-,+,+,-)})
\end{align*}

Since this expression satisfies the degree bound, it must in fact be $\mathbf{C}^\prime_{(1,2,1,2)}$, and $P_{(+,-,-,+),(1,2,1,2)}=P_{(-,+,+,-),(1,2,1,2)}=1+q$ while all other KLV polynomials $P_{\tau,(1,2,1,2)}$ are either $1$ or $0$ depending on whether or not $\tau\leq(1,2,1,2)$ in Bruhat order.

\subsubsection{Geometric interpretations}\label{ssec:geometric-klv}
First, recall the geometric interpretation of ordinary KL polynomials due to Kazhdan and Lusztig.  Given a variety $X$ and a point $p\in X$, let $IH^i_p(X)$ denote the $i$-th local intersection cohomology of $X$ at $p$.  In principle, one can calculate this as follows.  From an appropriate stratification of $X$, one constructs (as in \cite{GM-83}) a complex of sheaves $\mathcal{IH}(X)$ called the intersection cohomology sheaf.  One construction of $\mathcal{IH}(X)$ starts from the trivial local system on the largest stratum and extends it by certain truncations of derived pushforward (with compact support) on the derived category of sheaves on $X$~\cite{GM-83}.  Therefore, the complex $\mathcal{IH}(X)$ is also sometimes referred to as the Deligne-Goresky-MacPherson (DGM) extension of the trivial local system to $X$.  One can localize this complex at the point $p$, creating a complex of vector spaces.  The $i$-th cohomology of this complex is what we call $IH^i_p(X)$.  Kazhdan and Lusztig show in \cite{KL-80} that
\[ P_{v,w}(q)=\sum_i \dim IH^i_{vB/B}(X_w)q^{i/2}.\]

An analogous result holds for KLV polynomials in the $K$-orbit setting.  For the $(p,q)$-clan $\gamma$, denote by $Q_{\gamma}$ the corresponding $K$-orbit, and denote by $Y_{\gamma}$ the Zariski closure of $Q_{\gamma}$.  Abusing notation, let $\gamma$ also denote the trivial local system on $Q_{\gamma}$, and let $IH(\gamma)$ be the DGM extension of $\gamma$ to $Y_{\gamma}$.  Denote by $IH^i(\gamma)$ the $i$th cohomology of this complex.  For any $(p,q)$-clan $\tau$ with $Q_{\tau} \subseteq Y_{\gamma}$, denote by $[\tau:IH^i(\gamma)]$ the composition factor multiplicity of $\tau$ in $IH^i(\gamma)$ (in the category of $K$-equivariant constructible sheaves on $G/B$), where again we abuse notation and use $\tau$ to denote the trivial local system on $Q_{\tau}$.

Then the KLV polynomial $P_{\tau,\gamma}$ can be defined as follows \cite[Thm. 1.12]{LV-83}:
\[ P_{\tau,\gamma}(q)=\sum_i [\tau:IH^i(\gamma)] q^{i/2}. \]
In particular, all odd cohomology vanishes, as $P_{\tau,\gamma}(q)$ is an honest polynomial in $q$.

Localizing at a point $p \in Q_{\tau}$, we get
\[ P_{\tau,\gamma}(q)=\sum_i \dim IH^i_p(Y_{\gamma}) q^{i/2}. \]
Note that the left hand side should technically be the sum of all KLV polynomials $P_{\tau^\prime,\gamma}(q)$ where
$\tau^\prime$ runs over the set of all $K$-equivariant local systems on $Q_{\tau}$.
However, as we have mentioned, in our case no non-trivial $K$-equivariant local systems exist on any orbit.  
Hence the KLV polynomials we consider here are actually $IH$-Poincar\'{e} polynomials for $K$-orbit closures,
as is the case for ordinary KL polynomials and Schubert varieties.

\section{Combinatorial Criteria for Singularity Properties of $(1,2,1,2)$-avoiding Orbit Closures}\label{sec:path-criteria}
In this section, we use Theorem \ref{thm:richardson-theorem} to determine combinatorially which $(1,2,1,2)$-avoiding $K$-orbit closures possess certain singularity properties and which do not.

First we need some general facts about singularity properties on Richardson varieties.  We say a property $\mathcal{P}$ is {\bf local} if it is determined strictly by examining the local rings at points of the variety.  If a property $\mathcal{P}$ is local, we say it is {\bf open} if the $\mathcal{P}$-locus (meaning the set of points at which $X$ has the property) is an open set.  Furthermore, we say a property $\mathcal{P}$ is {\bf multiplicative} if it holds on $X\times Y$ precisely when it holds on both $X$ and $Y$.  Suppose that $\mathcal{P}$ is an open multiplicative local property of algebraic varieties. For example, being smooth, being a local complete intersection (lci), and being Gorenstein are all examples of such properties.  Then the following result on how to determine when a Richardson variety has property $\mathcal{P}$ is proved in \cite{Knutson-Woo-Yong-13}.

\begin{lemma}\label{lem:richardson-p-criterion}
Let $\mathcal{P}$ be an open multiplicative local property of algebraic varieties.
The Richardson variety $X_u^v$ has property $\mathcal{P}$ if and only if the Schubert variety $X^v$ is $\mathcal{P}$ at $u$ \textit{and} the opposite Schubert variety $X_u$ is $\mathcal{P}$ at $v$ (or equivalently, the Schubert variety $X^{w_0u}$ is $\mathcal{P}$ at $w_0v$).
\end{lemma}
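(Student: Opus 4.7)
My plan has three steps: reduce to checking $\mathcal{P}$ at $T$-fixed points, use a slice theorem to split the local geometry at each $T$-fixed point as a product of pieces of $X^v$ and $X_u$, and then propagate the hypothesis from the extreme fixed points $p_u$ and $p_v$ to all intermediate ones using Bruhat-order monotonicity. The $T$-fixed points of $X_u^v$ are exactly $\{p_w : v \leq w \leq u\}$. Since $X_u^v$ is projective and $T$-stable and the $\mathcal{P}$-locus is open and $T$-invariant, its complement is closed and $T$-stable; any nonempty such subset of a projective $T$-variety contains a $T$-fixed point, so $X_u^v$ has $\mathcal{P}$ globally if and only if it has $\mathcal{P}$ at every such $p_w$.

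For the local structure at $p_w$, I would use the Bialynicki-Birula / Kazhdan--Lusztig slice: choosing a generic one-parameter subgroup of $T$, there is a $T$-equivariant affine open neighborhood $U \ni p_w$ in $G/B$ that splits $T$-equivariantly as $U \cong A \times R$, where $A := U \cap X^w$ is attracting (smooth of codimension $\ell(w)$) and $R := U \cap X_w$ is repelling (smooth of dimension $\ell(w)$). Because $v \leq w$ implies $X^w \subseteq X^v$ and $w \leq u$ implies $X_w \subseteq X_u$, a $\mathbb{G}_m$-retraction argument gives
\[ X^v \cap U = A \times (U \cap X_w^v), \qquad X_u \cap U = (U \cap X_u^w) \times R, \]
and hence $X_u^v \cap U = (U \cap X_u^w) \times (U \cap X_w^v)$. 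Absorbing the smooth factors $A$ and $R$ via multiplicativity of $\mathcal{P}$ yields the local biconditional
\[ X_u^v \text{ is } \mathcal{P} \text{ at } p_w \;\iff\; X^v \text{ is } \mathcal{P} \text{ at } p_w \text{ and } X_u \text{ is } \mathcal{P} \text{ at } p_w. \]

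The forward direction of the lemma follows by specializing to $w = u$ and to $w = v$. For the backward direction, the $\mathcal{P}$-locus of $X^v$ is open and $B^-$-invariant, so its complement is a closed $B^-$-stable set, which is a finite union of opposite Schubert varieties $X^{w'}$; consequently the set of $w \geq v$ for which $X^v$ fails $\mathcal{P}$ at $p_w$ is upward-closed in Bruhat order. Thus $\mathcal{P}$-ness of $X^v$ at $p_u$ propagates to $\mathcal{P}$-ness at every $p_w$ with $v \leq w \leq u$. Dually, using $B$-invariance of the $\mathcal{P}$-locus of $X_u$ and that its complement is a union of Schubert varieties $X_{w'}$, the set of bad $w \leq u$ is downward-closed, so $\mathcal{P}$ at $p_v$ propagates upward. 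Combining these two propagation statements with the local biconditional and the first step yields the backward direction.

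The main technical input I anticipate needing to pin down is the slice decomposition in the middle step. The existence of the $T$-equivariant product neighborhood is classical, but one must carefully verify both that $X^v$ really is constant along the attracting factor $A$ and $X_u$ constant along the repelling factor $R$, and that the transverse slices are precisely the smaller Richardson varieties $X_w^v$ and $X_u^w$. This boils down to showing that, under the chosen one-parameter subgroup, the tangent directions at $p_w$ belonging to $X_w$ and $X^w$ are complementary and align with the two opposite Schubert filtrations; any misalignment of attracting versus repelling conventions at this step would silently break both the product structure and the subsequent propagation arguments.
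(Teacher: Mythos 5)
Your proposal is correct, and it essentially reconstructs the argument in the cited reference (Knutson--Woo--Yong), since the paper itself offers no proof of this lemma but simply cites \cite{Knutson-Woo-Yong-13}. The three-step structure --- reduce to $T$-fixed points via Borel's fixed point theorem, obtain a local product $X_u^v \cap U \cong (U\cap X_u^w)\times(U\cap X_w^v)$ at each $p_w$, and propagate $\mathcal{P}$-ness between $T$-fixed points using that the non-$\mathcal{P}$ locus of $X^v$ (resp.\ $X_u$) is closed and $B^-$-stable (resp.\ $B$-stable), hence a union of (opposite) Schubert varieties --- is exactly the right skeleton, and the local product decomposition is the content of the key theorem of \cite{Knutson-Woo-Yong-13} (which in turn builds on the Kazhdan--Lusztig slice lemma).

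Two small points worth tightening. First, ``absorbing the smooth factors $A$ and $R$ via multiplicativity'' requires that $\mathcal{P}$ holds for regular local rings: multiplicativity alone gives $\mathcal{P}(A\times Z)\iff\mathcal{P}(A)\wedge\mathcal{P}(Z)$, so to pass between $\mathcal{P}(A\times Z)$ and $\mathcal{P}(Z)$ you must know $\mathcal{P}(A)$. This hypothesis is stated in the paper's Lemma~\ref{lem:inverse-property} but not in Lemma~\ref{lem:richardson-p-criterion}; it is needed here too (and is satisfied by smoothness, lci, and Gorenstein, so nothing of consequence is at stake). Second, the ``$\mathbb{G}_m$-retraction argument'' for the identity $X^v\cap U = A\times(U\cap X_w^v)$ is cleaner to phrase as a group-translation argument: the subgroup $H:=w U^- w^{-1}\cap U^-$ of $U^-$ acts simply transitively on a neighborhood of $p_w$ in the cell $B^- p_w$ (giving $A$), the map $H\times R\to U$, $(h,r)\mapsto hr$, is an isomorphism, and $B^-$-stability of $X^v$ immediately gives $H\cdot(X^v\cap R)=X^v\cap U$; a naive $\mathbb{G}_m$-limit argument runs into the problem that limits of points of $U$ under the chosen one-parameter subgroup may leave $U$. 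Your final caveat about aligning attracting/repelling directions with the $B^-$- and $B$-stability of $X^v$ and $X_u$ is exactly the point one must get right.
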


We also require another easy, generally known lemma.  We include its proof for lack of a suitable reference.

\begin{lemma}\label{lem:inverse-property}
If $\mathcal{P}$ is an open multiplicative local property that holds for regular local rings, then the Schubert variety $X^v$ is $\mathcal{P}$ at $u$ if and only if the Schubert variety $X^{v^{-1}}$ is $\mathcal{P}$ at $u^{-1}$.
\end{lemma}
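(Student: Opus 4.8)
The plan is to exploit the classical isomorphism between a Schubert variety and its inverse, induced by the transpose/inverse map on $G = GL(n,\C)$. Concretely, the map $g \mapsto g^{-1}$ (or, on the level of permutation matrices, $w \mapsto w^{-1}$) descends to an automorphism $\iota$ of the flag variety $G/B$ that sends the Schubert cell $X_0^v = B^- v B/B$ to $X_0^{v^{-1}}$ and the opposite Schubert cell $X_u^0 = BuB/B$ to $X_{u^{-1}}^0$. The cleanest way to see this: $\iota$ is not literally $gB \mapsto g^{-1}B$ (which is ill-defined on cosets), but rather the composition of the transpose-inverse automorphism $g \mapsto (g^{-1})^t$ of $G$ — which preserves $B$ and $B^-$ and hence acts on $G/B$ — together with the isomorphism $G/B \cong G/B^-$ coming from $w_0$; one checks on $T$-fixed points that the net effect sends $vB/B$ to $v^{-1}B/B$ and carries $X^v$ isomorphically onto $X^{v^{-1}}$ with the point $uB/B$ going to $u^{-1}B/B$. (Alternatively, one can invoke the standard fact, e.g.\ from the theory of Bott--Samelson resolutions or directly from $BvB \cdot {}^{t(-)} = Bv^{-1}B$, that $X^v \cong X^{v^{-1}}$ via an isomorphism matching $T$-fixed points $p_u \leftrightarrow p_{u^{-1}}$.)

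First I would set up this isomorphism $\iota \colon X^v \xrightarrow{\ \sim\ } X^{v^{-1}}$ carefully, verifying the two facts we need: (i) $\iota$ is an isomorphism of varieties, and (ii) $\iota(p_u) = p_{u^{-1}}$ whenever $u \leq v$ (so that $p_u \in X^v$ corresponds to $p_{u^{-1}} \in X^{v^{-1}}$). Step (i) is where I expect the only real content to lie; step (ii) is a direct computation on $T$-fixed points. Then the lemma is immediate: since $\mathcal{P}$ is a local property, whether $X^v$ has $\mathcal{P}$ at $p_u$ depends only on the local ring $\mathcal{O}_{X^v, p_u}$; since $\iota$ is an isomorphism carrying $p_u$ to $p_{u^{-1}}$, it induces an isomorphism of local rings $\mathcal{O}_{X^v, p_u} \cong \mathcal{O}_{X^{v^{-1}}, p_{u^{-1}}}$; hence $X^v$ is $\mathcal{P}$ at $p_u$ if and only if $X^{v^{-1}}$ is $\mathcal{P}$ at $p_{u^{-1}}$. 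The openness and multiplicativity hypotheses on $\mathcal{P}$, and the regular-local-ring hypothesis, are not actually needed for this particular statement — being local suffices — but stating the lemma for the same class of properties as Lemma~\ref{lem:richardson-p-criterion} keeps the hypotheses uniform, so I would simply note that a local property is all that is used.

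The main obstacle is purely a matter of exposition: producing the isomorphism $X^v \cong X^{v^{-1}}$ cleanly without getting tangled in the bookkeeping of which Borel goes where under transpose-inverse versus conjugation by $w_0$, and in reconciling the paper's nonstandard convention ($X^v$ is a $B^-$-orbit closure of codimension $\ell(v)$, while $X_v$ is a $B$-orbit closure of dimension $\ell(v)$). One clean route that avoids sign-convention headaches: use $w_0$ to translate everything to ordinary (dimension-$\ell$) Schubert varieties $Y_w := X^{w_0 w} = \overline{BwB/B}$, recall that $g \mapsto (g^{-1})^t$ fixes $B$ and sends $BwB$ to $B w^{-1} B$ hence gives $Y_w \cong Y_{w^{-1}}$ matching $p_x \mapsto p_{x^{-1}}$, and then translate back, tracking that $v \mapsto v^{-1}$ corresponds to $w_0 v \mapsto w_0 v^{-1} = (v^{-1} w_0^{-1})^{-1} \cdot(\text{correction})$ — I would work out the precise correspondence so that the indices match the statement exactly. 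Once the isomorphism and its action on $T$-fixed points are pinned down, the proof is a one-line consequence of locality of $\mathcal{P}$.
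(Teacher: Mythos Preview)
Your approach has a genuine gap. First, the automorphism $g\mapsto (g^{-1})^t$ does \emph{not} fix $B$: for $b$ upper-triangular, $(b^{-1})^t$ is lower-triangular, so this map interchanges $B$ and $B^-$; and since $(w^{-1})^t=w$ for a permutation matrix, $BwB$ is sent to $B^-wB^-$, not to $Bw^{-1}B$. More fundamentally, \emph{no} automorphism of $G/B$ induces $p_u\mapsto p_{u^{-1}}$ on $T$-fixed points. For $GL_n$ the automorphism group of $G/B$ is generated by left translations together with the outer (transpose-inverse) involution; on $T$-fixed points these act by $u\mapsto wu$ and by $u\mapsto w_0uw_0$ (equivalently $u\mapsto uw_0$ after a left translation), and no composite of such maps is inversion, which is an anti-automorphism of $W$ rather than an automorphism. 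So the ``precise correspondence'' you hope to work out cannot be made to close.

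The paper's proof is genuinely different. It passes to the diagonal $G$-orbit closure $\mathcal Z_v=\overline{G\cdot(\id,v)}=\overline{G\cdot(v^{-1},\id)}$ in $G/B\times G/B$, notes that $(\id,u)$ and $(u^{-1},\id)$ lie in the same $G$-orbit, and uses local triviality of the two projections to see that $\mathcal Z_v$ is locally (smooth affine)$\times$(neighborhood of $u$ in $X_v$) near the first point and (neighborhood of $u^{-1}$ in $X_{v^{-1}}$)$\times$(smooth affine) near the second. This yields only an isomorphism of each Schubert neighborhood \emph{times an affine space} with the other times an affine space --- not an isomorphism of the neighborhoods themselves. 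Hence multiplicativity of $\mathcal P$ and its validity on regular local rings are genuinely used, contrary to your remark that locality alone suffices. (A direct isomorphism of local rings \emph{does} exist in type $A$, via transpose on matrix-Schubert or Kazhdan--Lusztig patch coordinates, but that is an explicit coordinate argument, not the global flag-variety automorphism you sketch.)
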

\begin{proof}
Denote by $\id$ the point $1B/B$.  Consider the diagonal action of $G$ on $G/B\times G/B$, and consider the $G$-orbit closure $\mathcal{Z}_v:=\overline{G\cdot (\id, v)}$, with surjective projection maps $\pi_1,\pi_2:\mathcal{Z}\rightarrow G/B$ onto the first and second factors respectively.

The fiber $\pi_1^{-1}(\id)$ is $\id \times X_v$, and $\pi_1$ is $G$-equivariant, so, taking an affine neighborhood $U$ of $\id$, we have $\pi_1^{-1}(U)=U \times X_v$.  Consider the point $(\id, u)\in \mathcal{Z}_v$.  It has an open neighborhood $U\times V$, where $V$ is isomorphic to an open neighborhood of $u$ in $X_v$.

On the other hand, we also have that $\mathcal{Z}_v=\overline{G\cdot(v^{-1}, \id)}$, so $\pi_2^{-1}(\id)=X_{v^{-1}}\times\id$.  Taking an affine neighborhood $U^\prime$ of $\id$, we have $\pi_2^{-1}(U^\prime)=X_{v^{-1}}\times U^\prime$.  Now consider the point $(u^{-1},\id)\in\mathcal{Z}_u$.  It has an open neighborhood $V^\prime\times U^\prime$, where $V^\prime$ is isomorphic to an open neighborhood of $u^{-1}$ in $X_{v^{-1}}$.

The points $(\id,v)$ and $(v^{-1},\id)$ are in the same $G$-orbit, and $\mathcal{Z}_u$ is $G$-invariant, so $\mathcal{P}$ holds on $U\times V$ if and only if it holds on $V^\prime\times U^\prime$.  Since $U$ and $U^\prime$ are smooth (since they are open subsets of $G/B$) and $\mathcal{P}$ is multiplicative, $\mathcal{P}$ holds on $V$ if and only if it holds on $V^\prime$.
\end{proof}

Let $\gamma$ be a $(1,2,1,2)$-avoiding $(p,q)$-clan, $Q_\gamma$ the corresponding $K$-orbit, and $Y_\gamma=\overline{Q_\gamma}$ its Zariski closure.  By Theorem \ref{thm:richardson-theorem}, $Y_\gamma=X_u^v$, where $u=(u(\gamma)w_0^K)^{-1}$ and $v=v(\gamma)^{-1}$.  To determine whether $Y_\gamma$ has property $\mathcal{P}$, by Lemma~\ref{lem:richardson-p-criterion}, it suffices to check whether $X^v$ is $\mathcal{P}$ at $u$ and whether $X^{w_0 u}$ is $\mathcal{P}$ at $w_0 v$.  Now by Lemma~\ref{lem:inverse-property}, it suffices to check whether $X^{v(\gamma)}$ is $\mathcal{P}$ at $u(\gamma)w_0^K$ and whether $X^{u(\gamma)w_0^Kw_0}$ is $\mathcal{P}$ at $v(\gamma)w_0$.  Note that, since $u(\gamma)$ is Grassmannian, so is $u(\gamma)w_0^Kw_0$.  Hence the determination of whether $Y_\gamma$ has property $\mathcal{P}$ boils down to checking whether certain points of Grassmannian Schubert varieties lie in the $\mathcal{P}$-locus.  For the properties $\mathcal{P}=$``(rationally) smooth", ``lci", and ``Gorenstein", the $\mathcal{P}$-locus of these special Schubert varieties is known.  Hence we are able to provide in what follows combinatorial criteria for these properties in the case of $(1,2,1,2)$-avoiding $\gamma$.

Note that, for general permutations $w$ and $x$, $X^{w_0ww_0}$ is isomorphic to $X^w$ by an isomorphism that takes the point $x$ to $w_0xw_0$.  Hence, checking whether $X^{u(\gamma)w_0^Kw_0}$ is $\mathcal{P}$ at $v(\gamma)w_0$ is equivalent to checking whether $X^{w_0u(\gamma)w_0^K}$ is $\mathcal{P}$ at $w_0v(\gamma)$.  By another application of Lemma~\ref{lem:richardson-p-criterion}, this is equivalent to checking if $\mathcal{P}$ holds on $X^{v(\gamma)}_{u(\gamma)w_0^K}$.  While this observation is not strictly necessary in what follows, we will frequently use it for brevity.

For the properties $\mathcal{P}$ listed above, the results stating when a permutation $u$ is in the $\mathcal{P}$-locus of a Grassmannian Schubert variety $X^v$ are best described in terms of a {\bf path diagram} associated to the permutations $u$ and $v$.  We now describe how to draw this diagram.  Let $p$ be the descent of the Grassmannian permutation $v$.  Start with a $p \times q$ rectangle, and trace a lattice path from the southwest corner to the northeast, moving either one unit right or one unit up at each step.  At the $i$th step, the path moves right if $v^{-1}(i) > p$, and up if $v^{-1}(i) \leq p$.  Note that this path determines a partition, namely the one whose Young diagram consists of the blocks of the $p \times q$ grid lying weakly northwest of it.  However, for our purposes, it is actually the path itself we are interested in.

As an example, consider the Grassmannian permutation $v=1367245$, which has a unique descent at position $4$.  The associated path fits inside a $4 \times 3$ rectangle as in Figure \ref{fig:first-path}.

\begin{figure}[h]
\centering
\begin{pspicture}(3,4)
\psgrid[subgriddiv=0,gridlabels=0]
\psdots[dotsize=0.2](0,0)(0,1)(1,1)(1,2)(2,2)(3,2)(3,3)(3,4)
\psline[linewidth=0.08](0,0)(0,1.05)
\psline[linewidth=0.08](0,1)(1.05,1)
\psline[linewidth=0.08](1,1)(1,2.05)
\psline[linewidth=0.08](1,2)(3.05,2)
\psline[linewidth=0.08](3,2)(3,4.05)
\end{pspicture}
\caption{The path of $v=1367245$}
\label{fig:first-path}
\end{figure}
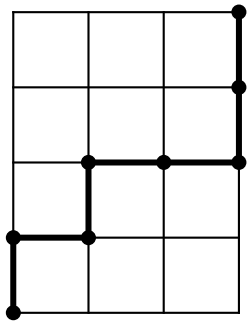

If $u \geq v$ is another permutation, then its path lies weakly southeast of that of $v$.  (Note that $u$ does not have to be Grassmannian, but when $w\in W_K$, the paths for $u$ and $uw$ are the same, and all local properties of $X^v$ are the same at both $u$ and $uw$.)  For instance, in Figure \ref{fig:second-path} the paths for the two Grassmannian permutations $v=124673589 < 156892347=u$ are shown drawn in the same $5 \times 4$ grid, with the path further southeast being that for $u$ and the one further northwest being that for $v$.

\begin{figure}[h]
\centering
\begin{pspicture}(4,5)
\psgrid[subgriddiv=0,gridlabels=0]
\psdots[dotsize=0.2](0,0)(0,1)(0,2)(1,2)(1,3)(2,3)(2,4)(2,5)(3,5)(4,5)(1,1)(2,1)(3,1)(3,2)(3,3)(4,3)(4,4)
\psline[linewidth=0.08](0,0)(0,2.05)
\psline[linewidth=0.08](0,2)(1.05,2)
\psline[linewidth=0.08](1,2)(1,3.05)
\psline[linewidth=0.08](1,3)(2.05,3)
\psline[linewidth=0.08](2,3)(2,5.05)
\psline[linewidth=0.08](2,5)(4.05,5)
\psline[linewidth=0.08](0,1)(3.05,1)
\psline[linewidth=0.08](3,1)(3,3.05)
\psline[linewidth=0.08](3,3)(4.05,3)
\psline[linewidth=0.08](4,3)(4,5.05)
\end{pspicture}
\caption{The paths for $v=124673589 < 156892347 = u$}
\label{fig:second-path}
\end{figure}
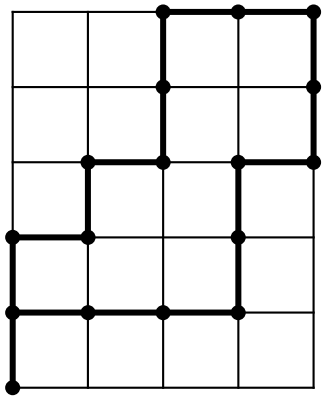

Thus the two paths for $u$ and $v$ determine a chain of skew shapes, each connected to the next at a point (or perhaps by a series of line segments, if the paths for $u$ and $v$ happen to coincide over some portion of the grid).  We refer to those skew shapes that are \textit{not} simply sequences of line segments --- those that actually open and then close, bounding a region of positive area --- as the \textbf{components} of the path diagram.  For each component of the path diagram, we call the portion of the path for $u$ which bounds its southeast side its \textbf{bottom boundary} and the portion of the path for $v$ which bounds its northwest side its \textbf{top boundary}.

For the purpose of convenience when we later recall the Lascoux-Sch\"utzenberger rule for KL-polynomials associated to Grassmannian permutations, we will prefer to draw the diagrams just described rotated clockwise $45^{\circ}$, and we will generally omit the portion of the $p \times q$ grid not lying along either path, drawing only the paths for $u$ and $v$ themselves.  Thus the above example of $u=156892347$ and $v=124673589$ will be depicted as in Figure \ref{fig:first-rotated-diagram}.  (Note that the path for $u$ now lies \textit{below} the path for $v$, so that the ``bottom" and ``top" boundaries of components are now appropriately named.)

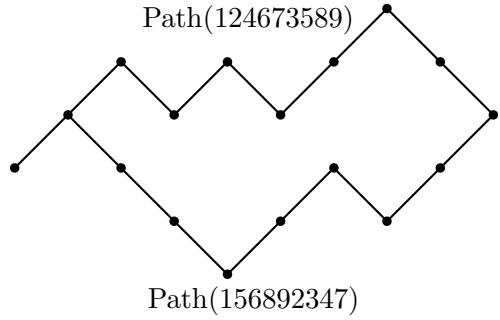
\begin{figure}[h]
\centering
\begin{pspicture}(4,5)
\rput[b]{-45}(0,2.5){
\psdots(0,0)(0,1)(0,2)(1,2)(1,3)(2,3)(2,4)(2,5)(3,5)(4,5)(1,1)(2,1)(3,1)(3,2)(3,3)(4,3)(4,4)
\psline(0,0)(0,2.05)
\psline(0,2)(1.05,2)
\psline(1,2)(1,3.05)
\psline(1,3)(2.05,3)
\psline(2,3)(2,5.05)
\psline(2,5)(4.05,5)
\psline(0,1)(3.05,1)
\psline(3,1)(3,3.05)
\psline(3,3)(4.05,3)
\psline(4,3)(4,5.05)
\rput[l]{*0}(2.5,0){Path(156892347)}
\rput[l]{*0}(-0.2,2.6){Path(124673589)}
}
\end{pspicture}
\caption{The rotated path diagram for $u=156892347$, $v=124673589$}
\label{fig:first-rotated-diagram}
\end{figure}

Note that, based on the algorithm for drawing the path diagram, it is easy to see that the path diagram for $vw_0\geq uw_0^Kw_0$ is simply the partition diagram for $u\geq v$ flipped upside down (with $p$ and $q$ also exchanged).

The combinatorial translation from a $(1,2,1,2)$-avoiding clan to a path diagram of this type is now easy to describe, using Theorem \ref{thm:richardson-theorem} and the above definitions.

\begin{definition}\label{def:path-diagram}
The \textbf{path diagram} for a $(1,2,1,2)$-avoiding $(p,q)$-clan $\gamma=(c_1, \hdots ,c_n)$ is drawn as follows:  Starting at the southwest corner of a $p \times q$ rectangle (rotated $45^{\circ}$) and tracing to the northeast corner, we draw two paths, $P_1$ and $P_2$, following these rules at step $i$ for $i=1,\hdots,n$:

\begin{enumerate}
	\item If $c_i=+$, both $P_1$ and $P_2$ move up;
	\item If $c_i=-$, both $P_1$ and $P_2$ move right;
	\item If $c_i$ is the first occurrence of a natural number, then $P_1$ moves up, while $P_2$ moves right;
	\item If $c_i$ is the second occurrence of a natural number, then $P_1$ moves right, while $P_2$ moves up.
\end{enumerate}
\end{definition}

It is clear in the above definition that $P_1$ (the upper path) is the path for $v(\gamma)$, while $P_2$ (the lower path) is the path for $u(\gamma)$.  Moreover, it is clear that the components of the path diagram open at the first occurrence of a natural number which is not contained within any other matching pair of numbers and close at the second occurrence of such a number.  We refer to such a matching pair as \textbf{outermost}.

\begin{example}
The path diagram for the $(6,5)$-clan $(1,+,-,+,2,+,-,-,2,+,1)$ is shown in Figure \ref{fig:rotated-clan-diagram}.
\begin{figure}[h]
\centering
\begin{pspicture}(5,6)
\rput[b]{-45}(0,2.5){
\psdots(0,0)(0,1)(0,2)(1,2)(1,3)(1,4)(1,5)(2,5)(3,5)(4,5)(4,6)(5,6)(1,0)(1,1)(2,1)(2,2)(3,2)(3,3)(4,3)(5,3)(5,4)(5,5)
\psline(0,0)(0,2)
\psline(0,2)(1,2)
\psline(1,2)(1,5)
\psline(1,5)(4,5)
\psline(4,5)(4,6)
\psline(4,6)(5,6)
\psline(0,0)(1,0)
\psline(1,0)(1,1)
\psline(1,1)(2,1)
\psline(2,1)(2,2)
\psline(2,2)(3,2)
\psline(3,2)(3,3)
\psline(3,3)(5,3)
\psline(5,3)(5,6)
}
\end{pspicture}
\caption{The rotated path diagram for $\gamma=(1,+,-,+,2,+,-,-,2,+,1)$}
\label{fig:rotated-clan-diagram}
\end{figure}
\end{example}

\subsection{(Rational) Smoothness}
\subsubsection{Globally (rationally) smooth $(1,2,1,2)$-avoiding $K$-orbit closures}
We start by determining which $(1,2,1,2)$-avoiding orbit closures are smooth.  Recall that a complex variety $X$ of dimension $n$ is \textbf{smooth at a point $p$} if the local ring $(\caO_{X,p},\mathfrak{m},\mathbbm{k})$ is \textbf{regular}, meaning that $\dim_{\mathbbm{k}} \mathfrak{m} / \mathfrak{m}^2$ is equal to the Krull dimension of $\caO_{X,p}$.  A variety $X$ is simply said to be \textbf{smooth} if it is smooth at every point.  Recall also that $X$ is \textbf{rationally smooth at $p$} if
\[ H^q(X,X \setminus \{p\}; \Q) \cong 
\begin{cases}
	\Q & \text{ if $q = 2n$} \\
	0 & \text{ otherwise}
\end{cases}
\]
and simply \textbf{rationally smooth} if it is rationally smooth at every point.

In general, smoothness and rational smoothness at a point are not equivalent notions, with smoothness being a strictly stronger condition.  However, for all points on type $A$ Schubert varieties, these conditions are known to be equivalent \cite{Deodhar-85}.  Since our checks of smoothness or rational smoothness of $(1,2,1,2)$-avoiding orbit closures reduce to checks for the same properties on two type $A$ Schubert varieties, the two conditions amount here to the same thing.  Thus we simply refer to the property of interest here as ``smoothness", dropping the redundant modifier ``rational".

For Grassmannian permutations $u$ and $v$, we describe how to use the path diagrams described above to decide
\begin{itemize}
	\item Whether $X^v$ is globally smooth, and
	\item Whether $X^v$ is smooth at $u$.
\end{itemize}
These criteria are well-known.  They appear explicitly in \cite{LW-90}, but are also implicit in earlier work such as \cite{LS-81} (when combined with the aforementioned equivalence of smoothness and rational smoothness established in \cite{Deodhar-85}) or \cite{Zelevinsky-83}.

We call a lattice point on the path for $v$ (the top one) an \textbf{outer corner} if the (unrotated) path contains both the lattice point directly south of it and the lattice point directly east of it.  Analogously, we call a lattice point on the path for $v$ an \textbf{inner corner} if the (unrotated) path contains both the lattice points directly north and directly west of it.  The by now classically known theorem is as follows.

\begin{proposition}
\label{prop:grass-sing-crit}
The Schubert variety $X_v$ is singular at $u$ if and only if there is at least one inner corner on the path for $v$ that is not on the path for $u$ or, equivalently, an inner corner on the top boundary of a component of the path diagram.
\end{proposition}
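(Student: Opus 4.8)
The plan is to reduce the question, via the projection $G/B \to \mathrm{Gr}(p,n)$, to the classical description of the singular locus of a Grassmannian Schubert variety, and then to read that description off the path diagram; since smoothness and rational smoothness coincide for type $A$ Schubert varieties, there is nothing to separate. First I would use that $v$, having its only descent at position $p$, is the minimal representative of its coset $vW_K$, so that the Schubert variety $X^v \subseteq G/B$ equals $\pi^{-1}(Y)$ for a Schubert variety $Y \subseteq G/P = \mathrm{Gr}(p,n)$, where $P \supseteq B$ is the maximal parabolic with $W_P = W_K$ and $\pi : G/B \to G/P$ is the projection. (This follows from a dimension count: $\pi$ is a smooth fibration with fiber $P/B$, and $\pi^{-1}(Y)$ is closed, irreducible, $B^-$-stable, contains $X^v$, and has dimension $\binom{n}{2} - \ell(v) = \dim X^v$.) Because $\pi$ is smooth, $X^v$ is smooth at $uB/B$ if and only if $Y$ is smooth at $\pi(uB/B) = uP/P$, and $uP/P$ depends only on the coset $uW_K$, i.e.\ only on the path of $u$ --- this is the geometric content of the earlier remark that the path of $u$, not $u$ itself, is what matters. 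Under the dictionary between lattice paths and Young diagrams, $Y = X_\lambda$ for the partition read off the path of $v$ and $\pi(uB/B) = e_\mu$ for the partition $\mu \subseteq \lambda$ read off the path of $u$, and the skew region $\lambda / \mu$ is exactly the union of the components of the path diagram together with the segments where the two paths coincide.

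Next I would localize $Y$ at $e_\mu$ in the affine chart $\Omega \cong \mathbb{A}^{pq}$ in which $e_\mu$ is the origin (the cell opposite to the coordinate flag adapted to $u$), where $Y$ is cut out, inside certain northeast-justified submatrices determined by the shape $\lambda$, by the vanishing of all minors of prescribed sizes --- the classical determinantal presentation of Grassmannian Schubert varieties, cf.\ \cite{LW-90}. The key combinatorial point is that distinct components of the path diagram occupy disjoint sets of rows \emph{and} of columns of the $p \times q$ grid (consecutive components being separated by a diagonal stretch on which the two paths agree), so the rank conditions for different components involve pairwise disjoint matrix coordinates, and coordinates lying in no component are unconstrained. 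Hence $Y \cap \Omega \cong \mathbb{A}^N \times \prod_i Z_i$, where $Z_i$ is the local model of the $i$th component. Since the path of $u$ restricted to a single component is the entire southeast boundary of that component's bounding rectangle, $Z_i$ is canonically the affine chart, at the minimal $T$-fixed point, of a Grassmannian Schubert variety $X_{\lambda_i}$ whose boundary path is the top boundary of the $i$th component. Thus $X^v$ is smooth at $u$ if and only if each $X_{\lambda_i}$ is smooth at its minimal $T$-fixed point.

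Finally I would invoke (or reprove in two lines) the classical fact that a Grassmannian Schubert variety is smooth at its minimal $T$-fixed point if and only if its partition is a rectangle: at that point a suitable one-parameter subgroup of $T$ contracts $\Omega$ to the origin, so $Z_i$ is an affine cone; a cone is smooth at its vertex precisely when it is a linear subspace, and $Z_i$ is linear precisely when all its defining rank conditions are vacuous, i.e.\ precisely when $\lambda_i$ is a rectangle. (Otherwise some block carries a genuine condition $\mathrm{rank} \le r$ with $0 < r$ strictly less than both side lengths of the block, and the determinantal variety $\{M : \mathrm{rank}(M) \le r\}$ has tangent space at $0$ of strictly larger dimension than the variety itself, hence is singular there.) Since $\lambda_i$ is a rectangle if and only if its boundary path --- the top boundary of the $i$th component --- has no inner corner, assembling the three steps gives that $X^v$ is singular at $u$ exactly when some component of the path diagram has an inner corner on its top boundary, equivalently when the path for $v$ has an inner corner not on the path for $u$. (The global smoothness statement from the surrounding discussion follows by taking $u$ to be the minimal point of $X^v$, so that $\lambda$ forms a single component.)

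I expect the main obstacle to be the decoupling bookkeeping in the middle step: choosing the chart opposite to $u$ correctly and verifying that the determinantal equations split as a product over components with the remaining coordinates free. This is the Grassmannian shadow of the general ``interval pattern'' local-equivalence phenomenon, where it becomes transparent; one can also shortcut it by quoting the product description of Grassmannian Schubert variety singularities directly from \cite{LW-90}. An alternative, non-geometric route avoids the first step altogether: combine the characterization of rational smoothness by constancy of Kazhdan--Lusztig polynomials on the relevant Bruhat interval with the Lascoux--Sch\"{u}tzenberger formula for cograssmannian $P_{x,w}$ (recalled later in the paper), which equals $1$ throughout that interval precisely when no inner corner lies strictly between the two paths.
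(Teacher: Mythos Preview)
The paper does not give a proof of this proposition; it is presented as ``the by now classically known theorem'' with references to \cite{LW-90}, \cite{LS-81}, and \cite{Zelevinsky-83}. So there is no argument in the paper to compare yours against, and your two suggested shortcuts --- quoting \cite{LW-90} directly, or deducing the criterion from the Lascoux--Sch\"utzenberger formula together with the equivalence of smoothness and rational smoothness in type~$A$ --- are exactly the routes the paper implicitly relies on.

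Your direct geometric sketch, however, has a genuine gap in the middle step. You assert that ``the path of $u$ restricted to a single component is the entire southeast boundary of that component's bounding rectangle,'' and hence that each factor $Z_i$ is the local model of $X_{\lambda_i}$ at its \emph{minimal} $T$-fixed point. Both assertions are false in general: within a component, $u$'s path may have its own corners. Concretely, take $p=q=3$ with $v$'s path up--up--right--up--right--right and $u$'s path right--up--right--right--up--up. There is a single component (the full $3\times 3$ box), and $u$'s path is not the southeast boundary. In the chart at $e_\mu$ one finds $X_\lambda$ cut out by a single $2\times 2$ minor (a quadric cone times~$\mathbb{A}^5$), whereas $X_{(1)}\subset\mathrm{Gr}(3,6)$ at its minimal point is cut out by a $3\times 3$ determinant (a cubic hypersurface); these are not isomorphic, so $Z_1$ is not $X_{\lambda_1}$ at the minimal point. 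What the factorization actually produces is the local model of $X_{\lambda_i}$ at the point $e_{\mu_i}$ determined by the restriction of $u$'s path to that component --- but then your step~3 no longer applies, and the reduction has achieved nothing. To repair the argument one must either reason differently within a single component (for instance via an explicit tangent-space count, or by a further degeneration moving $u$ to the maximal point dominating the chosen inner corner), or simply revert to one of the shortcuts you already mention.
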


Note this implies that $X_v$ is singular (at some point) if and only if the path for $v$ has an inner corner within the strict interior of the $p\times q$ rectangle.

For brevity, we refer to an inner corner on the path for $v$ that is not on the path for $u$ as a \textbf{singular corner}.  For example, in Figure \ref{fig:singular-top}, $X^v$ is singular at $u$, with the open dot indicating the lone singular corner.

\begin{remark}
The above definitions of inner and outer corner given above are opposite of what is usually found in the literature.  This is because usually, ``inner" and ``outer" are relative to the Young diagram which lies northwest of the path.  However, here we are thinking of ``inner" and ``outer" relative to the interiors of the components of the path diagram for $\gamma$, which lie southeast of the path for $v$.
\end{remark}

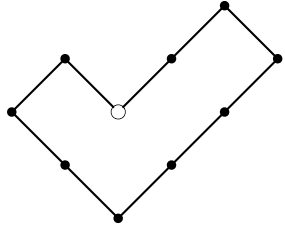
\begin{figure}[h]
\centering
\begin{pspicture}(2,3)
\rput[b]{-45}(0,1.5){
\psdots(0,0)(0,1)(1,2)(1,3)(2,3)(1,0)(2,0)(2,1)(2,2)
\psdot[dotstyle=o,dotsize=0.2](1,1)
\psline(0,0)(0,1)
\psline(0,1)(0.92,1)
\psline(1,1.08)(1,3)
\psline(1,3)(2,3)
\psline(0,0)(2,0)
\psline(2,0)(2,3)
}
\end{pspicture}
\caption{$X^v$ singular at $u$:  A singular corner on the top boundary}
\label{fig:singular-top}
\end{figure}

On the other hand, in Figure \ref{fig:smooth-top}, $X^v$ is smooth at $u$.

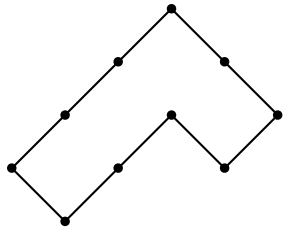
\begin{figure}[h]
\centering
\begin{pspicture}(2,3)
\rput[b]{-45}(0,0.5){
\psdots(0,0)(0,1)(0,2)(0,3)(1,3)(2,3)(1,0)(1,1)(1,2)(2,2)
\psline(0,0)(0,3)
\psline(0,3)(2,3)
\psline(0,0)(1,0)
\psline(1,0)(1,2)
\psline(1,2)(2,2)
\psline(2,2)(2,3)
}
\end{pspicture}
\caption{$X^v$ smooth at $u$:  No singular corners on the top boundary}
\label{fig:smooth-top}
\end{figure}

Recall that, to check smoothness of a $(1,2,1,2)$-avoiding $K$-orbit closure $Y_\gamma$, or equivalently smoothness of the Richardson variety $X^{v(\gamma)}_{u(\gamma)w_0^K}$, we must also check whether $X^{uw_0^Kw_0}$ is smooth at $vw_0$.  However, since the path diagram for this pair is simply the one for $X^v$ at $u$ flipped upside down, we simply have to perform the same check upside down.  
To be precise, $X^{uw_0^Kw_0}$ is singular at $vw_0$ if and only if there is an inner (meaning to the interior side of the path diagram) corner along the \textit{lower} path which does not lie on the \textit{upper} path or, equivalently, which lies on the bottom boundary of some component of the path diagram.  We also call a corner of this type a \textbf{singular corner}.  So for instance, in Figure \ref{fig:smooth-top}, $X^v$ \textit{is} smooth at $uw_0^K$, but the Richardson variety $X_{uw_0^K}^v$ is singular because $X^{w_0uw_0^K}$ is singular at $w_0v$.  The open dot in Figure \ref{fig:singular-bottom} marks the singular corner on the bottom boundary.

\begin{figure}[h]
\centering
\begin{pspicture}(2,3)
\rput[b]{-45}(0,0.5){
\psdots(0,0)(0,1)(0,2)(0,3)(1,3)(2,3)(1,0)(1,1)(2,2)
\psdot[dotstyle=o,dotsize=0.2](1,2)
\psline(0,0)(0,3)
\psline(0,3)(2,3)
\psline(0,0)(1,0)
\psline(1,0)(1,1.92)
\psline(1.08,2)(2,2)
\psline(2,2)(2,3)
}
\end{pspicture}
\caption{$X^{w_0uw_0^K}$ singular at $w_0v$:  A singular corner on the bottom boundary}
\label{fig:singular-bottom}
\end{figure}
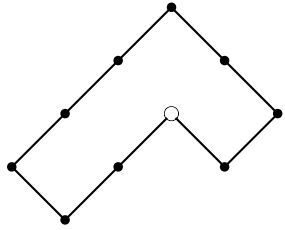

Given this pictorial characterization of smoothness of Richardson varieties of the type that we are interested in, we can now give the following pattern-avoidance criterion for smoothness of a $(1,2,1,2)$-avoiding $K$-orbit closure.

\begin{proposition}\label{prop:1212-avoiding-rationally-smooth}
Let $\gamma$ be a $(1,2,1,2)$-avoiding $(p,q)$-clan.  Then $Y_\gamma = \overline{Q_{\gamma}}$ is smooth if and only if $\gamma$ avoids the patterns $(1,+,-,1)$, $(1,-,+,1)$, $(1,+,2,2,1)$, $(1,-,2,2,1)$, $(1,2,2,+,1)$, $(1,2,2,-,1)$, and $(1,2,2,3,3,1)$.
\end{proposition}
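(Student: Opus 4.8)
\emph{Proof strategy.} The plan is to turn the statement into a question about the path diagram of $\gamma$ and then recognize the answer as pattern containment. As explained in the discussion preceding this proposition, Theorem~\ref{thm:richardson-theorem} together with Lemmas~\ref{lem:richardson-p-criterion} and~\ref{lem:inverse-property} reduces the smoothness of $Y_\gamma=\overline{Q_\gamma}$ to smoothness of $X^{v(\gamma)}$ at $u(\gamma)w_0^K$ and of $X^{w_0u(\gamma)w_0^K}$ at $w_0v(\gamma)$. Applying Proposition~\ref{prop:grass-sing-crit} to the first check and the upside-down version of it to the second, and recalling from Definition~\ref{def:path-diagram} that $P_1$ is the path of $v(\gamma)$, that $P_2$ is the path of $u(\gamma)$, and that right multiplication by an element of $W_K$ changes neither the path nor any local property, this becomes: $Y_\gamma$ is smooth if and only if the path diagram of $\gamma$ has no singular corner on the top boundary of any component and no singular corner on the bottom boundary of any component. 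The remaining work is to show that such a singular corner is present exactly when $\gamma$ contains one of the seven listed patterns.

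Next I would give a combinatorial description of ``singular corner''. Say a position $e$ of $\gamma$ is an \emph{up-step} if $c_e=+$ or $c_e$ is a first occurrence and a \emph{right-step} if $c_e=-$ or $c_e$ is a second occurrence; then $P_1$ moves up exactly at up-steps and right exactly at right-steps. Unwinding Proposition~\ref{prop:grass-sing-crit}, a singular corner on the top boundary is the same thing as a pair of consecutive positions $j,j+1$ with $j$ a right-step, $j+1$ an up-step, and $j$ strictly between the two occurrences of some natural number of $\gamma$ (the last condition being the ``inside a component'' condition, equivalent to the corner of $P_1$ not lying on $P_2$). I would prove that such a configuration exists if and only if there are a matched pair $c_a=c_d\in\N$ and positions $a<e<f<d$ with $e$ a right-step and $f$ an up-step. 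One direction is immediate. For the other, the subtlety is that pattern containment only places $a<e<f<d$ in order, with arbitrary entries in between, so one does not get an adjacent turn for free; but the $P_1$-move sequence over steps $e,e+1,\dots,f$ begins with a right move and ends with an up move, hence somewhere has a right move immediately followed by an up move, at some $j$ with $e\le j\le f-1$, and $a<e\le j<f\le d$ forces $j$ to lie strictly between the two occurrences of $c_a$. The bottom boundary is treated identically after swapping the roles of first and second occurrences (i.e.\ passing to $P_2$).

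Then I would match these conditions to the patterns, which is where $(1,2,1,2)$-avoidance is essential. Given a matched pair $[a,d]$ with a right-step $e$ and a later up-step $f$ in $(a,d)$, there are four cases according to whether $c_e$ is a $-$ or the second occurrence of a number $k$, and whether $c_f$ is a $+$ or the first occurrence of a number $k'$. When $c_e$ or $c_f$ is a number, $(1,2,1,2)$-avoidance forces the mate to lie in $(a,d)$ on the appropriate side, and (when both $k$ and $k'$ occur) forces the intervals of $k$ and $k'$ to be disjoint. Extracting $a$, $d$, the position $e$ together with the mate of $c_e$ when relevant, and the position $f$ together with the mate of $c_f$ when relevant, produces in the four cases a copy of $(1,-,+,1)$, $(1,-,2,2,1)$, $(1,2,2,+,1)$, or $(1,2,2,3,3,1)$; conversely each of these patterns plainly contains such a configuration. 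The mirror analysis for the bottom boundary gives $(1,+,-,1)$, $(1,+,2,2,1)$, $(1,2,2,-,1)$, $(1,2,2,3,3,1)$. Their union --- the two lists of four share only $(1,2,2,3,3,1)$ --- is the seven patterns in the statement, which finishes the proof.

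I expect the hard part to be bookkeeping rather than anything deep: making precise the equivalence ``$j$ lies strictly between two occurrences of a number'' $\Longleftrightarrow$ ``the corner of $P_1$ at step $j$ is not a vertex of $P_2$'' $\Longleftrightarrow$ ``the corner lies on the top boundary of a component'', including the boundary behaviour at the steps where a component opens or closes; and checking in the four-way case split that the extracted subwords are genuinely equivalent as clans to the asserted patterns, i.e.\ that $c_a$, $k$, $k'$ are pairwise distinct and that their intervals are nested or disjoint precisely as the target patterns require --- all of which follows from $\gamma$ avoiding $(1,2,1,2)$, but needs to be written out carefully.
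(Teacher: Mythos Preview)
Your proposal is correct and follows essentially the same route as the paper's proof: both reduce to detecting singular corners in the path diagram, characterize a top-boundary singular corner as a consecutive right--up turn of $P_1$ strictly inside a component (and dually for the bottom), and handle the converse by observing that a right step followed later by an up step inside a component forces such a consecutive turn somewhere in between, then split into the four cases on the type of each step to read off the patterns. Your write-up is a bit more explicit about the equivalence ``$j$ lies strictly between a matched pair'' $\Leftrightarrow$ ``the $P_1$-vertex after step $j$ is off $P_2$'', but this is exactly the paper's ``corresponding outermost pair'' reasoning.
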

\begin{proof}
As we have noted, singular corners occur only on either the top or bottom boundary of a component of the path diagram, so they are a result of characters of the clan occurring between an outermost pair of matching numbers.  More specifically, a singular corner on the top boundary of some component of the path diagram occurs if and only if there are two consecutive character positions $c_i$ and $c_{i+1}$ of $\gamma$ between the corresponding outermost pair such that one of the following is true:

\begin{enumerate}
	\item $c_i$ is a $-$, and $c_{i+1}$ is a $+$;
	\item $c_i$ is a $-$, and $c_{i+1}$ is the first occurrence of a natural number;
	\item $c_i$ is the second occurrence of a natural number, and $c_{i+1}$ is a $+$; or
	\item $c_i$ is the second occurrence of a natural number, and $c_{i+1}$ is the first occurrence of a (different) natural number.
\end{enumerate}

These four possibilities respectively imply that $\gamma$ contains the pattern $(1,-,+,1)$, $(1,-,2,2,1)$, $(1,2,2,+,1)$, or $(1,2,2,3,3,1)$.  Furthermore, there is a singular corner on the bottom of the path diagram if and only if there are consecutive character positions $c_i$ and $c_{i+1}$ of $\gamma$ occurring within the corresponding outermost pair and satisfying one of (1)-(4), except with inverted signs.  If $Y_\gamma$ is singular, the path diagram for $v(\gamma)$ and $u(\gamma)$ must have a singular corner.  Hence $\gamma$ must contain one of the bad patterns.

The other half of the proof is to show that, if $\gamma$ contains one of the bad patterns, then its path diagram contains a singular corner.  Supposing that $\gamma$ contains one of these patterns, we take the matching $1$'s of the pattern to be outermost.  Let $C$ be the component of the path diagram corresponding to this outermost pair.  Then one checks easily that in each case, either the bottom boundary of $C$ has an ``up" segment followed at some later point by a ``right" segment, or the top boundary of $C$ has a ``right" segment followed at some later point by an ``up" segment (or both).  In the former case, the bottom boundary of $C$ must change from ``up" to ``right" at some point, giving a singular corner.  In the latter case, the top boundary of $C$ must change from ``right" to ``up" at some point, again giving a singular corner.
\end{proof}

One would naturally wonder about $(1,2,1,2)$-\textit{containing} orbit closures as well.  In fact, the result here is as simple as one could hope for.

\begin{proposition}[\cite{McGovern-09a-arXiv}]\label{prop:1212-rationally-singular}
If $\gamma$ contains the pattern $(1,2,1,2)$, then $Y_\gamma = \overline{Q_{\gamma}}$ is rationally singular.
\end{proposition}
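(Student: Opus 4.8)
The plan is to show that if $\gamma$ contains the pattern $(1,2,1,2)$, then $Y_\gamma$ fails to be rationally smooth by exhibiting a point at which the local intersection cohomology is nontrivial in too low a degree, or equivalently by exhibiting a KLV polynomial $P_{\tau,\gamma}$ that is not identically $1$. The most efficient route is to reduce to the minimal bad pattern: locate within $\gamma$ four positions realizing $(1,2,1,2)$, take these two pairs of matching numbers to be ``outermost'' among those involved in the interleaving, and use the localization principle for rational smoothness together with a restriction/slice argument to reduce the question of rational smoothness of $Y_\gamma$ at a suitable $T$-fixed point to the analogous question for the orbit closure $Y_{(1,2,1,2)}$ inside the flag variety for $GL(4,\mathbb{C})$, or more precisely to a product of such a piece with smooth factors coming from the remaining entries of $\gamma$.

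The key steps, in order, are as follows. First I would make precise the combinatorial reduction: show that if $\gamma$ contains $(1,2,1,2)$ then there is a $K$-orbit $Q_\tau$ with $Q_\tau \subseteq Y_\gamma$ and a choice of $T$-fixed point $p \in Q_\tau$ such that an affine slice to $Q_\tau$ in $Y_\gamma$ at $p$ is, up to a smooth factor, isomorphic to a slice of $Y_{(1,2,1,2)}$ (for $GL(4)$) at the appropriate point. This kind of ``embedding of local models'' is standard for $K$-orbit closures indexed by clans, since extracting a sub-clan corresponds geometrically to a fibration or a transverse slice; I would cite or adapt the pattern-embedding arguments already implicit in \cite{McGovern-09a-arXiv} and \cite{Wyser-12b}. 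Second, I would establish the base case by direct computation: the orbit closure $Y_{(1,2,1,2)}$ for $GL(4,\mathbb{C})$ is rationally singular. This is exactly visible from the sample KLV computation already carried out in the excerpt, where $P_{(+,-,-,+),(1,2,1,2)} = P_{(-,+,+,-),(1,2,1,2)} = 1+q$; since these exceed $1$, $Y_{(1,2,1,2)}$ is not rationally smooth at the corresponding points. Third, I would combine these: rational smoothness is a local property, and it is ``multiplicative'' in the relevant sense (a product with a smooth variety is rationally smooth iff the remaining factor is), so the nontrivial local $IH$ at the sliced point persists in $Y_\gamma$, proving $Y_\gamma$ is rationally singular.

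The main obstacle I anticipate is the first step — justifying cleanly that containment of the pattern $(1,2,1,2)$ in a clan $\gamma$ yields an honest local geometric model of $Y_{(1,2,1,2)}$ (times a smooth factor) inside $Y_\gamma$. Pattern avoidance results for Schubert varieties are typically proved via explicit transverse slices or via Zelevinsky-type resolutions, but for $K$-orbit closures one must work with the clan parameterization and the Matsuki–Oshima bijection, and the transverse-slice structure is less transparent because the ``extra'' entries need not be simply carried along as a fiber unless one chooses the outermost pair carefully. A clean way to sidestep some of this is to invoke the algebraic/combinatorial side instead: using the explicit $\mathcal{H}_W$-action on $\mathcal{M}_K$ recalled in Section \ref{ssec:klv-defs}, one can argue that the recursive computation of $\mathbf{C}'_\gamma$ for a clan containing $(1,2,1,2)$ must at some stage import a $P_{\tau,\gamma}$ of positive degree, using a monotonicity/comparison property of KLV polynomials under pattern containment analogous to the one for ordinary KL polynomials; this keeps the entire argument combinatorial and avoids the delicate slice construction. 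Either way, once the reduction to the $(1,2,1,2)$ base case is in hand, the conclusion is immediate, so I would aim to present whichever of the two reductions admits the shortest rigorous writeup, most likely deferring to the cited pattern-embedding technology of \cite{McGovern-09a-arXiv}.
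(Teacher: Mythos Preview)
Your approach differs substantially from the paper's, and it has a genuine gap at the step you yourself flag as the ``main obstacle.''

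The paper does not prove this proposition at all; it simply cites McGovern, noting that McGovern's argument relies on a result of Springer \cite{Springer-94} which gives a root-theoretic necessary condition for a $K$-orbit closure to be rationally smooth. There is no slice construction, no pattern-embedding, and no monotonicity of KLV polynomials involved --- the argument is a direct application of Springer's criterion to clans containing $(1,2,1,2)$. So when you write that you would ``defer to the cited pattern-embedding technology of \cite{McGovern-09a-arXiv},'' you are misremembering what that paper does: there is no such technology there to defer to.

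More seriously, the reduction step you propose --- that containing the clan $(1,2,1,2)$ yields an honest local model $Y_{(1,2,1,2)} \times (\text{smooth})$ inside $Y_\gamma$ --- is not established in the literature for $K$-orbit closures. The analogous statement for Schubert varieties is the Billey--Braden theorem, but the paper you are working from explicitly poses the existence of such a geometric explanation in the $K$-orbit setting as an \emph{open question} (see the end of Section~\ref{sec:path-criteria}). Your fallback via ``monotonicity of KLV polynomials under pattern containment'' would likewise require a $K$-orbit analogue of Billey--Braden, which is the same open problem. So neither route you sketch is currently a proof; both depend on a lemma that is not known. The base case computation $P_{(+,-,-,+),(1,2,1,2)} = 1+q$ is correct and is a fine verification that $(1,2,1,2)$ itself is rationally singular, but it does not by itself propagate to larger $\gamma$ without the missing embedding result. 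If you want a self-contained argument, you should instead look up Springer's criterion and check it directly against clans containing $(1,2,1,2)$, as McGovern does.
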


Proposition \ref{prop:1212-rationally-singular} is proved in \cite{McGovern-09a-arXiv} using a combinatorial result of Springer \cite{Springer-94} giving a root-theoretic necessary condition for a $K$-orbit closure to be rationally smooth.  We do not have anything to add to this portion of McGovern's argument.  However, combining Propositions \ref{prop:1212-avoiding-rationally-smooth} and \ref{prop:1212-rationally-singular}, we have given a new proof of the main result of \cite{McGovern-09a-arXiv}:

\begin{theorem}[\cite{McGovern-09a-arXiv}]\label{thm:mcgovern}
Let $\gamma$ be a $(p,q)$-clan.  The $K$-orbit closure $\overline{Q_{\gamma}}$ is rationally smooth if and only if it avoids the patterns $(1,2,1,2)$, $(1,+,-,1)$, $(1,-,+,1)$, $(1,+,2,2,1)$, $(1,-,2,2,1)$, $(1,2,2,+,1)$, $(1,2,2,-,1)$, and $(1,2,2,3,3,1)$.  Moreover, smoothness and rational smoothness of $K$-orbit closures are equivalent for $(GL(p+q,\C),GL(p,\C)\times GL(q,\C))$.\footnote{The version of this paper in the Journal of Algebra has an error in the statement of this theorem.  The cited version on arXiv is correct.}
\end{theorem}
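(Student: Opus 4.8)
The plan is to obtain the theorem as a formal consequence of Propositions~\ref{prop:1212-avoiding-rationally-smooth} and~\ref{prop:1212-rationally-singular}, organized by a single dichotomy: whether or not the clan $\gamma$ contains the pattern $(1,2,1,2)$.

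First I would dispose of the case where $\gamma$ contains $(1,2,1,2)$. Here Proposition~\ref{prop:1212-rationally-singular} gives at once that $Y_\gamma = \overline{Q_\gamma}$ is rationally singular, and since honest smoothness implies rational smoothness, $Y_\gamma$ is also singular. Thus $Y_\gamma$ is neither smooth nor rationally smooth, consistent with the pattern list (it fails to avoid $(1,2,1,2)$), and the ``moreover'' clause holds vacuously in this case.

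Next I would treat the case where $\gamma$ avoids $(1,2,1,2)$. By Theorem~\ref{thm:richardson-theorem}, $Y_\gamma$ is the Richardson variety $X_u^v$ with $u = (u(\gamma)w_0^K)^{-1}$ and $v = v(\gamma)^{-1}$, and the reduction carried out just before Proposition~\ref{prop:grass-sing-crit}, via Lemmas~\ref{lem:richardson-p-criterion} and~\ref{lem:inverse-property}, rewrites (rational) smoothness of $Y_\gamma$ as the conjunction of (rational) smoothness of two Grassmannian Schubert varieties at prescribed points. Since smoothness and rational smoothness agree pointwise on type~$A$ Schubert varieties by Deodhar's theorem~\cite{Deodhar-85}, the two notions agree for $Y_\gamma$, which gives the ``moreover'' clause here. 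Proposition~\ref{prop:1212-avoiding-rationally-smooth} then characterizes smoothness of $Y_\gamma$ as avoidance by $\gamma$ of the seven remaining patterns; as $\gamma$ is already assumed to avoid $(1,2,1,2)$, this is exactly the assertion that $Y_\gamma$ is smooth if and only if $\gamma$ avoids all eight listed patterns.

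The last step is purely logical bookkeeping: $\gamma$ avoids all eight patterns precisely when it avoids $(1,2,1,2)$ together with the other seven, so in that event the second case yields smoothness and hence rational smoothness, while if $\gamma$ fails to avoid at least one of the eight then either the offending pattern is $(1,2,1,2)$ and the first case applies, or else the second case applies and Proposition~\ref{prop:1212-avoiding-rationally-smooth} forces singularity. I do not expect a genuine obstacle: the geometric content is already packaged in the two cited propositions, and the only novelties are assembling the dichotomy and observing that Deodhar's equivalence of smoothness and rational smoothness propagates through the Richardson reduction.
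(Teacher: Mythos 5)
Your proposal is correct and follows essentially the same route the paper intends: the paper itself presents Theorem~\ref{thm:mcgovern} as an immediate consequence of ``combining Propositions~\ref{prop:1212-avoiding-rationally-smooth} and~\ref{prop:1212-rationally-singular},'' organized implicitly by the same $(1,2,1,2)$-containing/avoiding dichotomy and the same appeal to Deodhar's equivalence of smoothness and rational smoothness on type~$A$ Schubert varieties to transfer Proposition~\ref{prop:1212-avoiding-rationally-smooth} from smoothness to rational smoothness and to establish the ``moreover'' clause. You have simply spelled out the bookkeeping the paper leaves to the reader; the only point worth making explicit is that the Richardson reduction of Lemma~\ref{lem:richardson-p-criterion} applies to rational smoothness because it too is an open, multiplicative, local property, which is implicit in the paper's remark preceding Proposition~\ref{prop:grass-sing-crit}.
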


\subsubsection{The singular locus of a $(1,2,1,2)$-avoiding $K$-orbit closure}\label{sec:sing-locus}
The results of \cite{McGovern-09a-arXiv} determine which $K$-orbit closures are singular but do not determine \textit{where} they are singular.  Here, we describe how to compute the singular locus of a $(1,2,1,2)$-avoiding $K$-orbit closure using its path diagram.

Let $\gamma$ be a $(p,q)$-clan, and let $Y_{\gamma} = \overline{Q_{\gamma}}$ be the corresponding $K$-orbit closure.  Let $S$ be the singular locus of $Y_{\gamma}$.  Since the left action of any element $k \in K$ takes an open neighborhood of any point $p \in Y_{\gamma}$ to an isomorphic open neighborhood of the point $k \cdot p \in Y_{\gamma}$, the singular locus of $Y_{\gamma}$ is $K$-stable.  Being closed, it is hence a union of $K$-orbit closures.  Thus a description of the singular locus amounts to giving the list of $K$-orbits (or clans) whose closures are the irreducible components of $S$.  Said another way, we wish to list those clans $\tau$ such that $Y_{\gamma}$ is singular along $Q_{\tau}$ and such that $\tau$ is (Bruhat) maximal with this property.

The following proposition, from~\cite[Corollary 1.3]{Knutson-Woo-Yong-13} describes the singular locus of a Richardson variety in terms of the singular loci of Schubert varieties.

\begin{proposition}
Let $\Sigma(X)$ denote the singular locus of a variety $X$.  Then
$$\Sigma(X^v_u)=(\Sigma(X^v)\cap X_u)\cup(\Sigma(X_u)\cap X^v).$$
\end{proposition}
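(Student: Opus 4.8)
The plan is to obtain this as the ``singular-locus version'' of Lemma~\ref{lem:richardson-p-criterion}, taking $\mathcal{P}$ to be smoothness. Smoothness is an open, multiplicative, local property of varieties that holds (trivially) for regular local rings, so the results of \cite{Knutson-Woo-Yong-13} apply; what I actually need is the \emph{pointwise} refinement of Lemma~\ref{lem:richardson-p-criterion}: for every point $p \in X^v_u$, the Richardson variety $X^v_u$ is smooth at $p$ if and only if $X^v$ is smooth at $p$ \emph{and} $X_u$ is smooth at $p$. (Lemma~\ref{lem:richardson-p-criterion} is the global shadow of this, obtained by specializing to the relevant $T$-fixed points via the $B$- and $B^-$-actions.) Granting the pointwise criterion, the proposition follows from a short set-theoretic computation.

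For that computation, write $\Sigma(X^v_u) = \{\, p \in X^v_u : X^v_u \text{ is not smooth at } p \,\}$. By the pointwise criterion this equals $\{\, p \in X^v_u : X^v \text{ is not smooth at } p,\ \text{or } X_u \text{ is not smooth at } p \,\}$, which in turn is $(\Sigma(X^v) \cap X^v_u) \cup (\Sigma(X_u) \cap X^v_u)$. Since $\Sigma(X^v) \subseteq X^v$ and $X^v_u = X^v \cap X_u$, we get $\Sigma(X^v) \cap X^v_u = \Sigma(X^v) \cap X_u$, and symmetrically $\Sigma(X_u) \cap X^v_u = \Sigma(X_u) \cap X^v$, which gives the asserted formula. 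As a consistency check, the right-hand side is a union of Schubert varieties intersected with opposite Schubert varieties, hence a union of Richardson varieties --- which is what one should expect of $\Sigma(X^v_u)$.

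The substantive input, and the step I expect to be the main obstacle, is the pointwise criterion itself, i.e.\ the local product structure of Richardson varieties established in \cite{Knutson-Woo-Yong-13}. The mechanism is that near any point $p$ of $X^v_u = X^v \cap X_u$ one can choose local (\'{e}tale or analytic) coordinates on $G/B$ in which $X^v$ is cut out by functions involving only one block of coordinates and $X_u$ by functions involving only a complementary block; a neighborhood of $p$ in $X^v_u$ then becomes the product of a slice of $X^v$ at $p$ with a slice of $X_u$ at $p$. Since $X^v$ (respectively $X_u$) is itself locally the product of such a slice with a smooth factor, and since smoothness is multiplicative, it follows that $X^v_u$ is smooth at $p$ exactly when both $X^v$ and $X_u$ are. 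For $T$-fixed $p$ this transversality is the classical Kazhdan--Lusztig local analysis of Schubert varieties; pushing it to an arbitrary point $p$ --- and checking that the two slices genuinely occupy independent coordinate directions, the numerical shadow of which is $\mathrm{codim}\, X^v_u = \mathrm{codim}\, X^v + \mathrm{codim}\, X_u$ --- is the technical core of \cite{Knutson-Woo-Yong-13}, which I would quote directly rather than reprove.
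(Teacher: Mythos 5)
Your proof is correct, and it is worth noting that the paper itself supplies no argument for this proposition: it is quoted directly from \cite[Corollary 1.3]{Knutson-Woo-Yong-13}. Your derivation is the natural one and matches the mechanism behind the cited corollary, namely the local product structure of Richardson varieties (\cite[Theorem 1.1]{Knutson-Woo-Yong-13}, which the paper does invoke later, in its intersection-homology guise, in the proof of Theorem~\ref{thm:klv-formula}). Your pointwise criterion --- that $X^v_u$ is smooth at $p$ if and only if both $X^v$ and $X_u$ are smooth at $p$ --- is exactly what that local product structure gives. If $p$ lies in the $B^-$-orbit $X_0^w$ and the $B$-orbit $X_y^0$, then a neighborhood of $p$ in $X^v_u$ factors (up to a smooth factor) as a slice of $X_u$ near $yB/B$ times a slice of $X^v$ near $wB/B$; since $X^v$ is $B^-$-stable and $X_u$ is $B$-stable, these slices have the same local structure as $X^v$ and $X_u$ at $p$ itself, and multiplicativity of smoothness finishes the pointwise claim. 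The set-theoretic reduction from there to the displayed identity is clean and correct. One small remark for precision: you call Lemma~\ref{lem:richardson-p-criterion} the ``global shadow'' of the pointwise criterion; more precisely, that lemma follows from the pointwise criterion together with the fact that $\Sigma(X^v)$ is $B^-$-stable and $\Sigma(X_u)$ is $B$-stable, so that if either set meets $X^v_u$ at all it already meets it at the $T$-fixed points $u$ or $v$. This is a side observation, not a gap in your argument.
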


It follows that the singular locus of a Richardson variety is a union of Richardson varieties.  Thus if $\gamma$ is $(1,2,1,2)$-avoiding, then by the previous paragraph we know that $S$ is a union of $K$-stable Richardson varieties, or in other words, closures of $(1,2,1,2)$-avoiding $K$-orbits.  Thus in searching for those $\tau$ described in the previous paragraph, we can restrict our attention to $(1,2,1,2)$-avoiding clans.

Bruhat order on $(1,2,1,2)$-avoiding orbit closures is determined solely by containment of path diagrams.  Moreover, if $\tau < \gamma$ are $(1,2,1,2)$-avoiding, then by the above proposition and Proposition~\ref{prop:grass-sing-crit}, $Y_{\gamma}$ is singular along $Y_{\tau}$ if and only if $\gamma$ has a singular corner (either on the top or the bottom path) that does not lie on the path diagram for $\tau$.  Thus the path diagrams for the $\tau$ we seek are precisely the largest ones missing a singular corner.

We construct these path diagrams by removing hooks from the skew diagrams bounded by the path diagram $D_{\gamma}$ for $\gamma$.  More specifically, we have one diagram $D_{\tau}$ for each singular corner of $D_{\gamma}$, formed as follows.  If the singular corner is on the bottom path, then we look at the corner box immediately above it and remove its hook, taking $D_{\tau}$ to be the boundary of the resulting skew shape.  If the singular corner is on the top path, then we do the same for the box immediately below it.  Clearly, the resulting shapes are precisely those that both miss a singular corner and are maximal with respect to containment among shapes having this property.

Each such diagram $D_{\tau}$ can easily be converted back to the clan $\tau$, as follows.  A path diagram does not necessarily specify a clan, but it \textit{does} specify what we will call the \textbf{FS-pattern} for $\tau$, which is a sequence of $+$'s, $-$'s, F's, and S's, with $\pm$ appearing wherever $\tau$ has one of these symbols, and F (respectively, S) appearing wherever $\tau$ has a first occurrence (respectively, a second occurrence).  To obtain the FS-pattern, we simply note, for each $i$, what the two paths do at step $i$.  If they both move up, character $i$ is a $+$.  If they both move right, character $i$ is a $-$.  If the top path moves up while the bottom path moves right, then character $i$ is an F.  Finally, if the top path moves right while the bottom path moves up, then character $i$ is an S.

Now, it is possible for multiple clans to have the same FS-pattern --- for instance, $(1,2,1,2)$ and $(1,2,2,1)$ have the same FS-pattern $(F,F,S,S)$, yet they are different clans.  However, by the above discussion, we know that the $\tau$ we seek is $(1,2,1,2)$-avoiding, and there \textit{is} a unique $(1,2,1,2)$-avoiding clan with a given FS-pattern.  To compute it, we simply move from left to right and insist that every second occurrence be matched with the most recently appearing first occurrence which does not yet have a mate.  Thus the FS-pattern $(F,F,S,S)$ specifies the $(1,2,1,2)$-avoiding clan $(1,2,2,1)$, since when we reach the first S, we insist that it be mated with the more recently appearing (and unmated) $2$ rather than with the $1$.  As another example, the FS pattern $(F,S,+,-,F,F,F,S,+,-,S,F,S,S)$ uniquely determines the $(1,2,1,2)$-avoiding clan $(1,1,+,-,2,3,4,4,+,-,3,5,5,2)$.

This discussion establishes the following.
\begin{theorem}
If $\gamma$ is $(1,2,1,2)$-avoiding, then the clans $\tau$ computed by the above procedure index the irreducible components of the singular locus of $Y_{\gamma}$.
\end{theorem}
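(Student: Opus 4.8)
The statement synthesizes the observations made just above, and the plan is to assemble them into three steps. First, Theorem~\ref{thm:richardson-theorem} presents $Y_\gamma$ as a Richardson variety, and the Knutson--Woo--Yong formula for the singular locus of a Richardson variety (quoted above) exhibits $\Sigma(Y_\gamma)$ as a finite union of Richardson subvarieties of $Y_\gamma$, namely the sets cut out of $\Sigma(X^v)$ by $X_u$ and out of $\Sigma(X_u)$ by $X^v$; hence its irreducible components are among these. Since left translation by $K$ is a local isomorphism, $\Sigma(Y_\gamma)$ is $K$-stable, and as $K=GL(p,\C)\times GL(q,\C)$ is connected it fixes each component; so every component is a $K$-stable Richardson variety, and therefore, as observed above, the closure of a uniquely determined $(1,2,1,2)$-avoiding $K$-orbit. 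It thus suffices to find the Bruhat-maximal $(1,2,1,2)$-avoiding clans $\tau$ with $Q_\tau\subseteq\Sigma(Y_\gamma)$; because $\Sigma(Y_\gamma)$ is $K$-stable, this containment is equivalent to $Y_\gamma$ being singular along all of $Q_\tau$.

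Second, one combines that same Knutson--Woo--Yong formula with Proposition~\ref{prop:grass-sing-crit} and its upside-down analogue for the bottom boundary, together with the fact (recalled above) that Bruhat order among $(1,2,1,2)$-avoiding clans is containment of path diagrams, to obtain the criterion: for $(1,2,1,2)$-avoiding $\tau<\gamma$, the variety $Y_\gamma$ is singular along $Y_\tau$ if and only if the path diagram $D_\gamma$ has a singular corner, on its top or its bottom path, that does not lie on $D_\tau$. Consequently, the clans indexing the irreducible components of $\Sigma(Y_\gamma)$ are precisely those whose path diagrams are the containment-maximal sub-path-diagrams of $D_\gamma$ that omit at least one singular corner of $D_\gamma$.

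Third --- and this is the only step needing real, if elementary, combinatorial work --- one identifies these maximal diagrams. For each singular corner $c$ of $D_\gamma$, let $D_{\tau_c}$ be obtained by deleting the hook of the box of $D_\gamma$ adjacent to $c$ (the box immediately above $c$ if $c$ lies on the bottom path, immediately below it if $c$ lies on the top path). The plan is to verify: (i) $D_{\tau_c}$ is a genuine path diagram, i.e. its two bounding paths still satisfy the weakly-southeast condition; (ii) a sub-path-diagram of $D_\gamma$ omits the corner $c$ exactly when it omits the corresponding corner box, and among sub-diagrams of a skew shape omitting a fixed box there is a unique containment-maximal one, the complement of that box's hook, so $D_{\tau_c}$ is the unique maximal sub-diagram omitting $c$; and (iii) $D_{\tau_c}$ still contains every singular corner of $D_\gamma$ other than $c$, so that for distinct singular corners the diagrams $D_{\tau_c}$ are pairwise incomparable --- whence each $Y_{\tau_c}$ really is an irreducible component of $\Sigma(Y_\gamma)$, none is repeated, and together they exhaust the maximal diagrams from the second step. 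I expect this Young-diagram bookkeeping --- reading off the finitely many local configurations of the two lattice paths near a singular corner and near the adjacent box --- to be the main obstacle, though a routine one.

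Finally, each $D_{\tau_c}$, being a valid path diagram, is the path diagram of a $K$-stable Richardson variety, which by Theorem~\ref{thm:richardson-theorem} equals $Y_{\tau_c}$ for a unique $(1,2,1,2)$-avoiding clan; and that clan is recovered from $D_{\tau_c}$ by reading off the FS-pattern step by step and then mating each second occurrence with the most recently appearing unmated first occurrence --- which is exactly the procedure in the statement. Combining the three steps proves the theorem.
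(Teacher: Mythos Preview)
Your proposal is correct and follows essentially the same approach as the paper. The paper's proof is in fact just the sentence ``This discussion establishes the following,'' referring to the paragraphs immediately preceding the theorem, and your three steps recapitulate exactly that discussion: reduce to $K$-stable Richardson components via Knutson--Woo--Yong and connectedness of $K$; translate singularity along $Y_\tau$ into the path-diagram criterion of omitting a singular corner; and identify the maximal such diagrams as the hook-removals. Your outline of step~3 into the subclaims (i)--(iii) is somewhat more careful than the paper, which dispatches this with ``Clearly, the resulting shapes are precisely those that both miss a singular corner and are maximal with respect to containment among shapes having this property,'' but the content is the same.
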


\begin{example}
Consider the singular $(4,4)$-clan $\gamma=(1,+,-,2,2,+,-,1)$.  Its path diagram is pictured in Figure \ref{fig:sing-locus-example}, with the four singular corners indicated and numbered.  (Here we draw the boxes of the skew diagram, to make it clearer what is removed to form the new skew/path diagrams.)

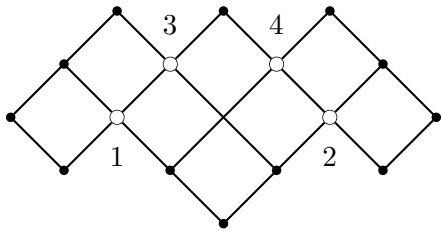
\begin{figure}[h]
\centering
\begin{pspicture}(4,4)
\rput[b]{-45}(0,1.5){
\psdots(0,0)(0,1)(0,2)(1,3)(2,4)(3,4)(4,4)(4,3)(3,2)(3,1)(2,1)(1,0)
\dotnode[dotstyle=o,dotsize=0.2](1,1){A}
\dotnode[dotstyle=o,dotsize=0.2](3,3){B}
\dotnode[dotstyle=o,dotsize=0.2](1,2){C}
\dotnode[dotstyle=o,dotsize=0.2](2,3){D}
\psline(0,0)(0,2)
\psline(1,0)(1,0.90)
\psline(1,1.10)(1,1.90)
\psline(1,2.10)(1,3)
\psline(2,1)(2,2.90)
\psline(2,3.10)(2,4)
\psline(3,1)(3,2.90)
\psline(3,3.10)(3,4)
\psline(4,3)(4,4)
\psline(0,0)(1,0)
\psline(0,1)(0.90,1)
\psline(1.10,1)(3,1)
\psline(0,2)(0.90,2)
\psline(1.10,2)(3,2)
\psline(1,3)(1.90,3)
\psline(2.10,3)(2.90,3)
\psline(3.10,3)(4,3)
\psline(2,4)(4,4)
}
\nput*[labelsep=0.3]{270}{A}{1}
\nput*[labelsep=0.3]{270}{B}{2}
\nput*[labelsep=0.3]{90}{C}{3}
\nput*[labelsep=0.3]{90}{D}{4}
\end{pspicture}
\caption{The path diagram for $(1,+,-,2,2,+,-,1)$}
\label{fig:sing-locus-example}
\end{figure}

The new path diagrams for $\tau_1$, $\tau_2$, $\tau_3$, and $\tau_4$, formed by removing the hook at each of the singular corners $1$, $2$, $3$, and $4$, respectively, appear in Figure \ref{fig:sing-locus-example-2} (read from the top left to the bottom right).  These determine the FS patterns $(+,+,-,F,-,+,-,S)$, $(F,+,-,+,S,+,-,-)$, $(F,S,-,F,+,+,-,S)$, and $(F,+,-,-,S,+,F,S)$, which correspond, respectively, to the clans $\tau_1=(+,+,-,1,-,+,-,1)$, $\tau_2=(1,+,-,+,1,+,-,-)$, $\tau_3=(1,1,-,2,+,+,-,2)$, and $\tau_4=(1,+,-,-,1,+,2,2)$.  Thus the singular locus of $Y_{\gamma}$ is the union of the $K$-orbit closures corresponding to those $4$ clans.

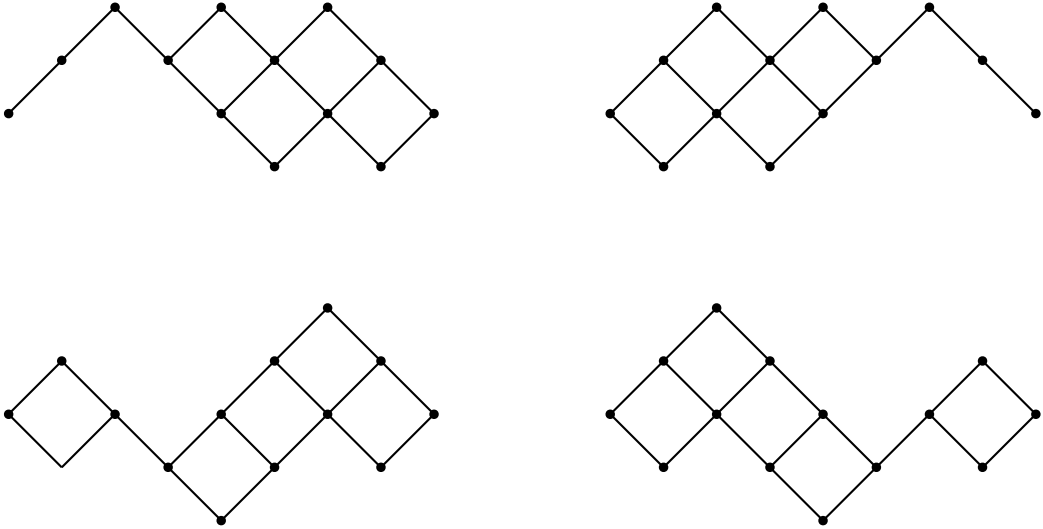
\begin{figure}[h]
\centering
\begin{pspicture}(12,10)
\rput[b](-1,6){
\rput[b]{-45}(0,0){
\psdots(0,0)(0,1)(0,2)(1,2)(1,3)(2,3)(2,4)(3,4)(4,4)(4,3)(3,3)(3,2)(2,2)
\psline(0,0)(0,2)
\psline(1,2)(1,3)
\psline(2,2)(2,4)
\psline(3,2)(3,4)
\psline(4,3)(4,4)
\psline(0,2)(3,2)
\psline(1,3)(4,3)
\psline(2,4)(4,4)
}}

\rput[b](7,6){
\rput[b]{-45}(0,0){
\psdots(0,0)(0,1)(0,2)(1,2)(1,3)(2,3)(2,4)(3,4)(4,4)(2,2)(2,1)(1,1)(1,0)
\psline(0,0)(1,0)
\psline(0,1)(2,1)
\psline(0,2)(2,2)
\psline(1,3)(2,3)
\psline(2,4)(4,4)
\psline(0,0)(0,2)
\psline(1,0)(1,3)
\psline(2,1)(2,4)
}}

\rput[b](-1,2){
\rput[b]{-45}(0,0){
\psdots(0,0)(0,1)(1,1)(2,1)(2,2)(2,3)(2,4)(3,4)(4,4)(4,3)(3,3)(3,2)(3,1)
\psline(0,0)(1,0)
\psline(0,1)(3,1)
\psline(2,2)(3,2)
\psline(2,3)(4,3)
\psline(2,4)(4,4)
\psline(0,0)(0,1)
\psline(1,0)(1,1)
\psline(2,1)(2,4)
\psline(3,1)(3,4)
\psline(4,3)(4,4)
}}

\rput[b](7,2){
\rput[b]{-45}(0,0){
\psdots(0,0)(0,1)(0,2)(1,2)(2,2)(3,2)(3,3)(3,4)(4,4)(4,3)(3,1)(2,1)(1,1)(1,0)
\psline(0,0)(1,0)
\psline(0,1)(3,1)
\psline(0,2)(3,2)
\psline(3,3)(4,3)
\psline(3,4)(4,4)
\psline(0,0)(0,2)
\psline(1,0)(1,2)
\psline(2,1)(2,2)
\psline(3,1)(3,4)
\psline(4,3)(4,4)
}}
\end{pspicture}

\caption{Diagrams for the singular components of $(1,+,-,2,2,+,-,1)$}
\label{fig:sing-locus-example-2}
\end{figure}
\end{example}

\subsection{LCI-ness}\label{sec:lci-ness}
Recall that a local ring $R$ is a \textbf{local complete intersection} (\textbf{lci}) if there exists a regular local ring $S$ and an ideal $I$ generated by a regular sequence on $S$ such that $R \cong S/I$.  A variety or scheme $X$ is said to be \textbf{lci at the point $p$} if the local ring $\caO_{X,p}$ of $X$ at $p$ is lci.  $X$ is simply said to be \textbf{lci} if it is lci at every point.  For any variety $X$, the set of points at which $X$ is \textit{not} lci is a Zariski-closed subset of $X$.

As was the case with smoothness, to understand lci-ness of Richardson varieties, we must know something about the lci locus of the relevant Schubert varieties.  The lci locus of Grassmannian Schubert varieties is now understood by the following proposition, due to C. Darayon~\cite{Darayon-LCIlocus}.  To state the result, we require one further definition.  Given an inner corner on a (top boundary) path diagram, we define the \textbf{left} (respectively \textbf{right}) \textbf{leg length} to be the number of consecutive lattice path segments on the path northwest (respectively northeast) of the corner.

\begin{proposition}
\label{prop:grass-lci-locus}
Let $u\geq v$ be a Grassmannian permutations.  Then $X^v$ is not lci at $u$ if and only if at least one of the following hold:
\begin{enumerate}
\item The path diagram for $v$ has an inner corner not on the path diagram for $u$ that has a left or right leg length greater than 1.
\item The path diagram for $v$ has two consecutive inner corners, neither of which is on the path diagram for $u$.
\end{enumerate}
\end{proposition}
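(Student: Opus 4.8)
The plan is to localize at the $T$-fixed point $u$ and convert the statement into an explicit local-algebra question about ladder determinantal ideals. Since lci-ness is an open multiplicative local property that holds for regular local rings, and since a neighbourhood of $u=uB/B$ in $X^v$ is isomorphic to the product of an affine space $\mathbb{A}^N$ with the transverse slice (Kazhdan--Lusztig variety) $\mathcal{N}_{u,v}:=X^v\cap X^0_u$, one has that $X^v$ is lci at $u$ if and only if $\mathcal{N}_{u,v}$ is lci at its base point. Because $v$ is Grassmannian, $X^v$ is a fibre bundle with smooth fibres over a Schubert variety in $\mathrm{Gr}(p,p+q)$, so we may work directly in the Grassmannian, where $\mathcal{N}_{u,v}$ has the classical presentation as an affine variety cut out in a space of matrices of indeterminates by rank conditions: to each inner corner of the $v$-path one attaches a northeast rectangular submatrix together with the condition that its rank be at most the corner's index, the conditions coming from corners that already lie on the $u$-path being vacuous. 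Thus $\mathcal{N}_{u,v}$ is (the big-cell model of) a one-sided ladder determinantal variety whose ladder is read off from the path diagram, the relevant "active" corners being exactly the singular corners of Proposition~\ref{prop:grass-sing-crit}. Such varieties are irreducible, reduced, and Cohen--Macaulay, so $\mathcal{N}_{u,v}$ is lci at its base point if and only if its defining ideal $I$ satisfies $\mu(I)=\mathrm{ht}(I)$ (minimal number of generators equals height).

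The combinatorial heart of the proof is then to decide when $\mu(I)=\mathrm{ht}(I)$ purely in terms of the two paths. First, the defining minors attached to corners lying in different components of the path diagram involve pairwise-disjoint sets of indeterminates (when the basis is chosen adapted to $u$), so $\mathcal{N}_{u,v}$ is a product of one smaller ladder determinantal variety per component times an affine space, and by multiplicativity one reduces to a single component. Within a component, the ladder ideal decomposes further according to the outer corners (peaks) of the $v$-path separating its singular corners, and $\mu(I)=\mathrm{ht}(I)$ holds if and only if each resulting piece contributes exactly one minor. One then checks: a piece contributes exactly one minor precisely when it is a single singular corner both of whose leg lengths equal $1$ (the attached submatrix is $2\times2$ with a rank-$1$ condition, a single determinant, a hypersurface); a singular corner with a leg length $\geq 2$ attaches a submatrix with both sides $\geq 2$, whose rank condition needs several minors (an Eagon--Northcott, not a Koszul, resolution), forcing $\mu(I)>\mathrm{ht}(I)$; and two consecutive singular corners share a peak, so the ladder ideal between them is, up to relabelling, the ideal of $2\times2$ minors of a generic $2\times3$ matrix, which needs one "cross" minor beyond the two corner minors. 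Translating these two failure modes back through the path-diagram dictionary of Definition~\ref{def:path-diagram} yields exactly conditions (1) and (2), and their absence gives a genuine complete intersection.

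The step I expect to be the main obstacle is precisely this ladder-determinantal bookkeeping: identifying, for one-sided ladder determinantal ideals, exactly when $\mu(I)=\mathrm{ht}(I)$, and matching the excess-generator phenomenon to "leg length $>1$" and "two consecutive singular corners". Making this rigorous would likely go through either the explicit (Eagon--Northcott/Herzog--Trung-type) minimal free resolutions of ladder determinantal ideals, or a Gröbner degeneration in a diagonal term order in the style of the Gröbner geometry used in \cite{Knutson-Woo-Yong-13} and \cite{Ulfarsson-Woo-11}: one must verify that, absent the two bad configurations, the leading terms of the generating minors involve pairwise-disjoint variables, so the initial ideal is a monomial complete intersection and one concludes by flatness and upper-semicontinuity of the number of generators, while in the presence of either configuration the initial ideal is provably not lci. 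Everything else --- the reduction to $\mathcal{N}_{u,v}$, the decomposition over components, and the final translation --- is routine given Proposition~\ref{prop:grass-sing-crit}, Lemma~\ref{lem:inverse-property}, and the multiplicativity of lci-ness recorded above. I would also note in passing an alternative route: Grassmannian permutations are $321$-avoiding, and each forbidden one-line pattern for lci-ness of type~$A$ Schubert varieties contains $321$, so the Ulfarsson--Woo criterion collapses to a single interval pattern that one could in principle unwind into path-diagram language; but that bookkeeping is at least as delicate, so I would prefer the determinantal approach above.
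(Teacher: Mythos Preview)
The paper does not prove this proposition at all: it is stated as a result of C.~Darayon and cited as \cite{Darayon-LCIlocus} without argument. So there is no proof in the paper to compare your attempt against.

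That said, your outline is a reasonable route to the statement and is presumably close in spirit to what Darayon does. The reduction to the Kazhdan--Lusztig slice, its identification (after passing to the Grassmannian) with a one-sided ladder determinantal variety, and the reduction of ``lci'' to ``$\mu(I)=\mathrm{ht}(I)$'' via Cohen--Macaulayness are all standard and correct, as is the decomposition over components of the path diagram. The heart of the matter is, as you acknowledge, the corner-by-corner bookkeeping, and here you have the right picture: a lone singular corner with both legs of length~$1$ contributes a single hypersurface equation, while longer legs or adjacent singular corners force extra minors. One caution on the details: the size of the submatrix attached to a corner and the rank imposed on it depend on the distance from that corner to the $u$-path as well as on the leg lengths, so the case analysis is a bit more involved than the bare ``$2\times 2$ versus larger'' dichotomy you sketch; but the conclusion survives, and either of the two mechanisms you propose (explicit resolutions of ladder determinantal ideals, or a diagonal Gr\"obner degeneration) will close this gap with some work.

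I would drop, or at least rethink, the aside about the \'Ulfarsson--Woo route. Your claim that every forbidden one-line pattern for lci-ness contains $321$ is not correct---for example $351624$, which appears on their list, is $321$-avoiding---so the criterion does not collapse in the way you suggest. Since you already prefer the determinantal argument, nothing is lost by omitting this remark.
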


We extend the above definition of leg lengths analogously to singular corners on both the top and bottom boundaries of path diagrams.  Now, combining Lemma~\ref{lem:richardson-p-criterion} and Proposition~\ref{prop:grass-lci-locus}, we have the following:

\begin{proposition}
For Grassmannian $u$ and $v$, the Richardson variety $X_{uw_0^K}^v$ is lci if and only if both of the following hold for every component $C$ of the path diagram:
\begin{enumerate}
	\item Every singular corner of $C$ has both leg lengths equal to $1$.
	\item $C$ contains at most one singular corner on its bottom boundary and at most one singular corner on its top boundary.
\end{enumerate}
\end{proposition}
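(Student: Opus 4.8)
The plan is to derive the criterion from Lemma~\ref{lem:richardson-p-criterion} and Darayon's Proposition~\ref{prop:grass-lci-locus}, and then translate the resulting pair of conditions into the language of components. First I would use Lemma~\ref{lem:richardson-p-criterion}: $X^v_{uw_0^K}$ is lci if and only if $X^v$ is lci at $uw_0^K$ and $X_{uw_0^K}$ is lci at $v$. Since $w_0^K\in W_K$, the first is equivalent to $X^v$ being lci at $u$; and by left multiplication by $w_0$ together with the observation (made just before Definition~\ref{def:path-diagram}) that the relevant path diagram is simply flipped upside down, the second is governed by the same path diagram read upside down. Applying Proposition~\ref{prop:grass-lci-locus} to each then says: $X^v$ fails to be lci at $u$ exactly when some component has a singular corner on its top boundary with a leg of length $>1$, \emph{or} $v$'s path has two consecutive inner corners both singular; and $X_{uw_0^K}$ fails to be lci at $v$ exactly when the same holds with ``top boundary'' replaced by ``bottom boundary,'' the flip interchanging inner corners on the two paths and interchanging left and right leg lengths.

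The key structural input, which I would isolate as a lemma, is that within a single component $C$ the top boundary (a portion of the path for $v$) and the bottom boundary (a portion of the path for $u$) meet only at the two endpoints where $C$ opens and closes. Granting this, I claim two consequences. First, every inner corner lying on the top boundary of $C$ is automatically a singular corner: along the top boundary the inner and outer corners strictly alternate, beginning and ending with an outer corner, so every inner corner of the top boundary lies strictly between two outer corners, hence is not an endpoint of $C$, hence is not on $u$'s path. Second, if the top boundary of $C$ carries two or more inner corners then two of them are consecutive among \emph{all} inner corners of $v$'s path. For this I would check, directly from Definition~\ref{def:path-diagram}, that no inner corner of $v$'s path lies strictly between the top boundaries of two consecutive components: at a junction between consecutive components the path for $v$ makes a turn (entering from the west and leaving to the north) at a lattice point lying on $u$'s path, i.e. a non-singular inner corner, and one of these always occurs in the stretch between the last inner corner of one component and the first inner corner of the next (even when a run of $+$'s and $-$'s separates the two outermost pairs). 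The symmetric statements hold for bottom boundaries via the flip.

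With these facts the remainder is bookkeeping. A singular corner always has both leg lengths at least $1$, so ``no singular corner on a top or bottom boundary has a leg of length $>1$'' is equivalent to ``every singular corner of every component has both leg lengths equal to $1$,'' which is condition (1); the flip's interchange of left and right legs is harmless. For the other condition, by the two consequences above, ``$v$'s path has no two consecutive singular inner corners'' is equivalent to ``no component has two or more inner corners on its top boundary,'' i.e. ``every component has at most one singular corner on its top boundary,'' and likewise on the bottom side. Hence both Darayon conditions failing (for $X^v$ at $u$ and for $X_{uw_0^K}$ at $v$) is precisely the conjunction of (1) and (2) for every component $C$, which by Lemma~\ref{lem:richardson-p-criterion} is precisely lci-ness of $X^v_{uw_0^K}$.

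The hard part will be the structural lemma on component junctions — specifically, making precise from Definition~\ref{def:path-diagram} that at the boundary between two components the path for $v$ produces an inner corner lying on the path for $u$, so that a pair of singular inner corners drawn from two different components is never ``consecutive'' in Darayon's sense, and handling the case analysis when a stretch of $+$'s and $-$'s intervenes between two outermost pairs. Everything else (relating leg lengths, and matching Darayon's ``top/bottom'' data with outermost pairs of the clan) is routine given the dictionary already established in this section.
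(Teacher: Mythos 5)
Your proof is correct and follows the route the paper indicates without carrying out: the paper gives no argument, simply asserting the proposition follows by ``combining'' Lemma~\ref{lem:richardson-p-criterion} and Proposition~\ref{prop:grass-lci-locus}, and you supply the omitted bookkeeping. The two structural observations you isolate (that inner corners strictly interior to a component's top boundary are automatically singular since the $u$- and $v$-paths meet only where components open and close, and that consecutive singular inner corners lie in the same component because a non-singular inner corner always occurs on the coinciding stretch between components, the top boundary of one ending with a right step and the next beginning with an up step) are precisely what translates Darayon's ``two consecutive inner corners'' condition into the per-component statement~(2), and the paper leaves these implicit.
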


So, for example, the $K$-orbit $(1,+,-,+,-,1)$ is non-lci, as the lone component of its path diagram, which is shown in Figure \ref{fig:non-lci-bottom}, contains two singular corners along its bottom boundary.  Similarly, $(1,-,+,-,+,1)$ is non-lci, having two singular corners along its top boundary.  In addition, the path diagrams of Figures \ref{fig:singular-top} and \ref{fig:singular-bottom} are both diagrams of non-lci Richardson varieties, since each singular corner has one leg of length $1$ but another of length $2$.

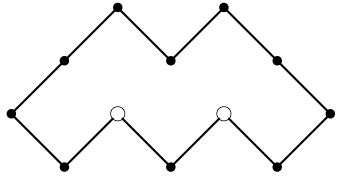
\begin{figure}[h]
\centering
\begin{pspicture}(3,3)
\rput[b]{-45}(0,0.5){
\psdots(0,0)(0,1)(0,2)(1,2)(1,3)(2,3)(3,3)(1,0)(2,1)(3,2)
\psdots[dotstyle=o,dotsize=0.2](1,1)(2,2)
\psline(0,0)(0,2)
\psline(0,2)(1,2)
\psline(1,2)(1,3)
\psline(1,3)(3,3)
\psline(0,0)(1,0)
\psline(1,0)(1,0.92)
\psline(1.08,1)(2,1)
\psline(2,1)(2,1.92)
\psline(2.08,2)(3,2)
\psline(3,2)(3,3)
}
\end{pspicture}
\caption{The path diagram for the non-lci $K$-orbit $(1,+,-,+,-,1)$}
\label{fig:non-lci-bottom}
\end{figure}

As was the case for smoothness, this pictorial requirement on the path diagram for a $(1,2,1,2)$-avoiding clan $\gamma$ can be translated to a pattern avoidance condition on $\gamma$.

\begin{proposition}\label{prop:1212-avoiding-lci}
If $\gamma$ is $(1,2,1,2)$-avoiding, then $Y_\gamma = \overline{Q_{\gamma}}$ is lci if and only $\gamma$ avoids one of $35$ bad patterns.  The bad patterns are the following, along with their negatives (the negative of a pattern being the one obtained from that pattern by inverting all signs):  $(1,+,+,-,1)$, $(1,+,-,-,1)$, $(1,-,2,2,+,1)$, $(1,+,+,2,2,1)$, $(1,+,-,2,2,1)$, $(1,2,2,-,-,1)$, $(1,2,2,+,-,1)$, $(1,2,+,2,-,1)$, $(1,+,2,-,2,1)$, $(1,+,2,3,3,2,1)$, $(1,2,3,3,2,-,1)$, $(1,-,2,2,3,3,1)$, $(1,2,+,2,3,3,1)$ $(1,2,2,+,3,3,1)$, $(1,2,2,3,-,3,1)$, $(1,2,2,3,3,+,1)$, $(1,2,3,3,2,4,4,1)$, $(1,2,2,3,4,4,3,1)$, and $(1,2,2,3,3,4,4,1)$.
\end{proposition}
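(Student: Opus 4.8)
The plan is to follow the template of the proof of Proposition~\ref{prop:1212-avoiding-rationally-smooth}, but with the pictorial lci criterion established just above (the Proposition combining Lemma~\ref{lem:richardson-p-criterion} with Darayon's Proposition~\ref{prop:grass-lci-locus}) in place of the singular-corner criterion of Proposition~\ref{prop:grass-sing-crit}. By that criterion together with the reductions preceding it, $Y_\gamma$ fails to be lci exactly when some component $C$ of the path diagram of $\gamma$ exhibits one of four local features: (i) a singular corner on the top boundary of $C$ whose left or right leg has length $\geq 2$; (ii) the same on the bottom boundary; (iii) two singular corners on the top boundary of $C$; (iv) two singular corners on the bottom boundary of $C$. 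Negating a clan exchanges the roles of the upper path $P_1$ and the lower path $P_2$ while swapping rightward and upward steps, and hence exchanges top-boundary failures with bottom-boundary failures; this is the source of the symmetry under negation in the list of $35$ patterns (with the three sign-free patterns self-negative), and it reduces the whole problem to translating features (i) and (iii) into pattern-containment and then reading off features (ii) and (iv) by negation.

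For the implication ``$Y_\gamma$ not lci $\Rightarrow$ $\gamma$ contains one of the listed patterns'', I would fix a component $C$ exhibiting feature (i) or (iii), let the matching pair of numbers whose first and second occurrences open and close $C$ play the role of the outer $1$'s, and recall that a singular corner on the top boundary is a spot where $P_1$ passes from a rightward to an upward step, i.e.\ two consecutive characters $c_i,c_{i+1}$ with $c_i\in\{-,\text{second occurrence}\}$ and $c_{i+1}\in\{+,\text{first occurrence}\}$. Feature (i) then means $P_1$ makes, inside $C$, either two rightward steps then an upward step (left leg $\geq 2$) or a rightward step then two upward steps (right leg $\geq 2$); feature (iii) means $P_1$ makes, inside $C$, a rightward, then an upward, then a rightward, then an upward step, in that order. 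In each case one reads off these three or four characters together with the outer $1$'s, remembers that a ``second occurrence'' character forces its (necessarily nested) first occurrence to lie inside $C$ as well and symmetrically for first occurrences, and records the resulting minimal clan. Carrying out this finite enumeration over the four direction-giver types ($+$, $-$, first occurrence, second occurrence) in each slot should produce a list of minimal top-boundary patterns which, together with its negation, is exactly the stated set of $35$; the one delicate point is that a feature-(iii) configuration yields a new minimal pattern precisely when no character can be removed without unbalancing the clan --- e.g.\ $(1,-,2,2,+,1)$, arising from a $-$, the first occurrence of $2$, the second occurrence of $2$, and a $+$ --- and otherwise it already contains a feature-(i) pattern, e.g.\ $(1,-,+,-,+,1)$ contains $(1,-,+,+,1)$, the negative of $(1,+,-,-,1)$.

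For the converse, given a containment of one of the $35$ patterns $\pi$ in $\gamma$, I would look at the matching pair of $\gamma$ onto which the outer $1$'s of $\pi$ are mapped, note that it lies inside some outermost pair of $\gamma$, and let $C$ be the corresponding component. Inspecting each pattern, the images of its non-$1$ characters realize, along the relevant boundary of $C$, a subsequence of steps of the form ``rightward, rightward, upward'' or ``rightward, upward, upward'' or ``rightward, upward, rightward, upward'' (with rightward and upward interchanged if the pattern exhibits a bottom-boundary feature). One then argues, in the spirit of the proof of Proposition~\ref{prop:1212-avoiding-rationally-smooth} but now also bounding leg lengths and counting corners, that a monotone boundary path containing such a subsequence must have either a singular corner with a leg of length $\geq 2$ or two singular corners, so the pictorial lci criterion fails for $C$ and $Y_\gamma$ is not lci.

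The part I expect to be hardest is not any conceptual point but the size and care of the forward enumeration: with four direction-giver types in each of three or four slots, two boundaries, and two failure modes, one must verify both that every minimal clan so produced appears in the stated list and that the list is irredundant, which requires correctly distinguishing the feature-(iii) configurations that collapse (by deleting a character) onto a feature-(i) pattern from those --- the ``sandwiched'' patterns such as $(1,-,2,2,+,1)$ and the three sign-free patterns --- that do not. A secondary, more routine issue is checking that passing from a pattern occurrence in $\gamma$ to the component of the outermost pair containing its $1$'s preserves the relevant local feature, including the boundary cases where those $1$'s coincide with endpoints of that outermost pair; this is dealt with exactly as in the proof of Proposition~\ref{prop:1212-avoiding-rationally-smooth}.
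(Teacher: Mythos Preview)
Your proposal is correct and follows essentially the same approach as the paper's own proof.  The only cosmetic difference is that you reduce to the \emph{top}-boundary failures (i) and (iii) and obtain the bottom-boundary patterns by negation, whereas the paper enumerates the bottom-boundary failures first; since the stated list is closed under negation by construction, this yields the same $35$ patterns.  Your identification of the ``delicate point''---that among the sixteen possible two-corner configurations only a handful fail to already contain a leg-length-$\geq 2$ pattern---is exactly the reduction the paper performs.
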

\begin{proof}
The possible ways for the path diagram for the clan $\gamma$ to have a singular corner with at least one leg length not equal to $1$ are as follows:

\begin{enumerate}
	\item Along the bottom boundary of some component, have three consecutive segments of the form ``up-up-right";
	\item Along the bottom boundary of some component, have three consecutive segments of the form ``up-right-right";
	\item Along the top boundary of some component, have three consecutive segments of the form ``right-up-up"; or
	\item Along the top boundary of some component, have three consecutive segments of the form ``right-right-up".
\end{enumerate}

For the first possibility to occur, there must be three consecutive characters enclosed in an outermost matching pair such that the first two are either $+$'s or second occurrences, and the last is either a $-$ or a first occurrence.  There are $8$ possible sequences, and the minimal clans containing one of these sequences are  $(1,+,+,-,1)$, $(1,+,+,2,2,1)$, $(1,2,+,2,-,1)$, $(1,2,+,2,3,3,1)$, $(1,2,2,+,-,1)$, $(1,2,2,+,3,3,1)$, $(1,2,3,3,2,-,1)$, and $(1,2,3,3,2,4,4,1)$.

For the second possibility, there must be three consecutive characters enclosed in an outermost matching pair such that the first is a $+$ or a second occurrence, and the last two are either $-$'s or first occurrences.  Again, there are $8$ possible sequences, with the list of minimal clans containing one of them as follows:  $(1,+,-,-,1)$, $(1,+,-,2,2,1)$, $(1,+,2,-,2,1)$, ($1,+,2,3,3,2,1)$, $(1,2,2,-,-,1)$, $(1,2,2,-,3,3,1)$, $(1,2,2,3,-,3,1)$, $(1,2,2,3,4,4,3,1)$.

The minimal patterns for possibilities (3) and (4) above are easily seen to be the negatives of these, as was the case when checking smoothness.  This gives us so far $28$ distinct clans.

Now, consider the possible ways for the path diagram to have a component with two singular corners on either its bottom boundary or its top boundary.  We may assume that each singular corner has equal leg lengths of $1$, as otherwise we are already covered by the preceding cases.  Along the bottom boundary, we require a sequence of the form ``up-right-up-right" enclosed in a matching pair.  There are $16$ possible ways this can be accomplished, choosing either a $+$ or a second occurrence for the first position, either a $-$ or a first occurrence for the second position, either a $+$ or a second occurrence for the third position, and either a $-$ or a second occurrence for the fourth position.  Among the $16$ minimal clans containing such a string of consecutive characters within an outermost pair, only four fail to contain one of the bad patterns we have already found.  These are $(1,+,2,2,-,1)$, $(1,+,2,2,3,3,1)$, $(1,2,2,3,3,-,1)$, and $(1,2,2,3,3,4,4,1)$.  (To illustrate the previous point, the clan $(1,+,-,+,-,1)$ is a minimal clan with a sequence of the type we seek, but this clan contains the known bad patterns $(1,+,+,-,1)$ and $(1,+,-,-,1)$, so we do not consider this to be a new bad pattern.)

To handle the top boundary, we simply take the negatives of the patterns causing problems along the bottom boundary, as usual.  This gives three new patterns.  Combining these $3$ with the $4$ listed in the previous paragraph and the first $28$ patterns described above, we have argued that any non-lci clan must contain one of the $35$ bad patterns.

Conversely, suppose that our clan $\gamma$ contains one of the $35$ patterns.  Then we can take the $1$'s of the pattern to be an outermost pair.  Let $C$ be the corresponding component of the path diagram for $\gamma$.  One checks easily that for each of the $35$ patterns given, either the bottom boundary of $C$ contains three (not necessarily consecutive) segments of the form ``up-up-right" or of the form ``up-right-right"; or the top boundary of $C$ contains three (not necessarily consecutive) segments of the form ``right-right-up" or of the form ``right-up-up"; or both.  Considering the bottom boundary, there are two possibilities.  Either the bottom boundary changes from ``up" to ``right" at least twice, giving at least two singular corners on the bottom boundary, or it changes from ``up" to ``right" only once, in which case either the left leg (in the case ``up-up-right") or the right leg (in the case ``up-right-right") has length at least $2$.  In either event, the path diagram must be non-lci.  The reasoning for the top boundary is simply that for the bottom boundary flipped upside down.
\end{proof}

\begin{remark}
We can also compute the non-lci locus of a $(1,2,1,2)$-avoiding $K$-orbit closure, using a similar procedure to that described in Section \ref{sec:sing-locus}.  For each singular corner with at least one leg length larger than 1, we get a component of the non-lci locus by removing the hook for that corner.  Furthermore, if some component has two adjacent singular corners (both on the top or both on the bottom) with all leg lengths 1, we also get a single component of the non-lci locus by removing the hooks for both corners simultaneously.
\end{remark}

Naturally, one would again wonder about the $(1,2,1,2)$-\textit{containing} orbit closures and about combinatorial criteria for determining which ones are lci and which are not.  As we explain below, whether a $K$-orbit closure is lci can be checked by computer in any given example.  With the help of Macaulay 2 \cite{M2} code written by the second author and A. Yong, the authors have been able to study this question experimentally.  Alas, the results of these experiments show that the answer here is not as simple as that of Proposition \ref{prop:1212-rationally-singular}.  Indeed, some $(1,2,1,2)$-containing orbit closures are lci, while others are not.

We describe the mathematics behind using Macaulay 2 \cite{M2} to check lci-ness of a $(1,2,1,2)$-containing clan $\gamma$.  First, recall that, since the non-lci locus of $Y_{\gamma}$ is $K$-stable and closed, it is a union of $K$-orbit closures.  Thus, if $Y_{\gamma}$ is non-lci at some (equivalently, every) point of $Q_{\tau}$, and if $Q_{\delta} \subseteq \overline{Q_{\tau}}$, then $Y_{\gamma}$ is non-lci at every point of $Q_{\delta}$ as well.  So to check whether $Y_{\gamma}$ is globally lci, it suffices to check whether $Y_{\gamma}$ is lci along each \textit{closed} $K$-orbit contained in it.  Moreover, to determine whether $Y_{\gamma}$ is lci along a given closed $K$-orbit, it suffices to check whether $Y_{\gamma}$ is lci at a single point of the closed orbit.

The closed $K$-orbits are parametrized by clans consisting of only $+$'s and $-$'s, and determining whether a given closed orbit $\tau$ is contained in $Y_{\gamma}$ is easy in light of \cite[Theorem 2.5]{Wyser-13}.  Given such a closed orbit, we describe how to determine whether $Y_{\gamma}$ is lci at the distinguished representative $p_v=vB/B$ of $Q_\tau$, which is a $T$-fixed point whenever $Q_\tau$ is closed.  (Indeed, $v$ is the permutation $v(\tau)$, and $p_v$ is represented by the permutation matrix having $1$'s in positions $(v(i),i)$ for each $i=1,\hdots,n$.)

There exists an open affine neighborhood of $Y_{\gamma}$ containing $p_v$ that reflects all the local structure of $Y_{\gamma}$ near $p_v$.  This open neighborhood is called the \textbf{patch} of $Y_{\gamma}$ at $p_v$ (or at $\tau$) in \cite{Wyser-Yong-13}.  The patch is the reduced scheme whose underlying set is $Y_{\gamma} \cap vB^-B/B$, with $vB^-B/B$ a ``permuted big cell".  The permuted big cell is an open affine subset of $G/B$, and it can be coordinatized by representing its general element as a matrix with $1$'s in positions $(v(i),i)$ for each $i=1,\hdots,n$, with $0$'s to the right of these $1$'s, and unspecialized variable entries $z_{i,j}$ in the remaining positions.  We view $vB^-B/B$ as
\[ \mathbb{A}^{\binom{n}{2}} \cong \text{Spec}(R), \]
where $R = \C[\textbf{z}]$, with $\textbf{z}$ the unspecialized $z$-variables mentioned above.  The point $p_v$ corresponds to the origin in $\mathbb{A}^{\binom{n}{2}}$ under this identification.

The patch can be viewed as a reduced and irreducible closed subscheme of $\mathbb{A}^{\binom{n}{2}}$, defined by a prime ideal which we denote by $I_{\gamma,\tau}$.  As explained in \cite{Wyser-Yong-13}, some ``obvious" generators for this ideal are suggested by \cite[Theorem 2.5]{Wyser-13}.  These generators are known to define the patch set theoretically~\cite[Proposition 4.3]{Wyser-Yong-13} and are conjectured~\cite[Conjecture 4.4]{Wyser-Yong-13} to be sufficient to generate all of $I_{\gamma,\tau}$.  Even without a proof of this conjecture, an ideal generated by these obvious equations can certainly be created in Macaulay 2 \cite{M2} in any given example, and its radicalness can be checked, for example by verifying that the generators in question form a Gr\"{o}bner basis with squarefree lead terms with respect to a chosen term order.  Indeed, such checks were the basis of the conjecture in the first place.

Now, note that the patch is stable under the action of $T$, being the intersection of two $T$-stable subsets of $G/B$.  (Any $K$-orbit closure $Y_{\gamma}$ is $T$-stable since it is stable under $K$, which contains the full maximal torus $T$ of $G$.)  Furthermore, there exists a one parameter subgroup $S\subseteq T$ such that the point $p_v$ corresponding to the origin of this affine space is an attractor for $S$.  Hence the $T$-action induces a positive $\Z^n$-grading on $R$ with respect to which $I_{\gamma,\tau}$ is a homogeneous ideal.  Thus one can speak of a minimal free $\Z^n$-graded resolution of $I_{\gamma,\tau}$, or of $R/I_{\gamma,\tau}$.

Now, note that $Y_{\gamma}$ is lci at $p_v$ if and only if the patch $\text{Spec}(R/I_{\gamma,\tau})$ is lci.  Since $I_{\gamma,\tau}$ is homogeneous with respect to a positive grading, the maximal ideal for $p_v$ is the unique maximal graded ideal of $R$, and hence the condition of being a local complete intersection (at $p_v$) is equivalent to that of being a complete intersection.  (See \cite[Prop.1.5.15]{Bruns-Herzog} as well the surrounding section for more details.)  This can be determined by checking whether the codimension of $R/I_{\gamma,\tau}$ is equal to the minimal number of generators of $I_{\gamma,\tau}$.  The latter number can be computed as the first Betti number of the aforementioned minimal free resolution of $R/I_{\gamma,\tau}$.

Given the ideal $I_{\gamma,\tau}$, all of the aforementioned data can be computed using Macaulay 2 \cite{M2}, allowing us to check lci-ness.

\begin{example}\label{ex:m2-lci-checks}
Consider the $(1,2,1,2)$-containing $(3,3)$-clan $\gamma=(1,2,+,-,1,2)$.  The closed orbit $\tau=(+,-,+,+,-,-)$ is contained in the orbit closure $Y_{\gamma}$.  The Macaulay 2 \cite{M2} command

\centerline{\tt 
  I := computePQPatchIdeal("12+-12",3,3,"+-++--"),}
\noindent
developed by the second author and A. Yong, creates the patch ideal $I_{\gamma,\tau}$ described in \cite{Wyser-Yong-13}, as well as giving $\C[\textbf{z}]$ the appropriate $\Z^n$-grading coming from the torus action.  This enables us to then perform the command

\centerline{\tt 
  resl:= prune res((ring I)\textasciicircum 1/I),}
which computes the minimal graded free resolution, followed by

\centerline{\tt 
  betti resl.}
From this command, we learn that the first Betti number is $2$, meaning that $I_{\gamma,\tau}$ is minimally generated by $2$ elements.  On the other hand, the command 

\centerline{\tt 
  codim ((ring I)/I)}
reveals that the patch is of codimension $2$.  Thus the patch of $Y_{\gamma}$ at $\tau$ is a complete intersection, which tells us that $Y_{\gamma}$ is lci along $Q_{\tau}$.  Similarly, one can check that $Y_{\gamma}$ is lci along every closed orbit below it in Bruhat order, so it is in fact globally lci.

On the other hand, performing the same checks for $\gamma=(1,+,2,1,2)$ at $\tau=(+,+,-,-,+)$ reveals that $I_{\gamma,\tau}$ is minimally generated by $3$ elements, but the patch is again of codimension $2$.  So in this case, $Y_{\gamma}$ is \textit{not} lci at $\tau$, so of course it is not globally lci.
\end{example}

The checks described in Example \ref{ex:m2-lci-checks} can be automated to check all $(1,2,1,2)$-containing orbit closures along all closed orbits for a particular $p$ and $q$.  This allows one to give an exhaustive list of which orbit closures are lci and which are not.  The authors have successfully determined the exhaustive list of non-lci $(1,2,1,2)$-containing orbit closures for all $(p,q)$ through $p+q=8$.  As it turns out, through $p+q=8$, lci-ness is characterized by pattern avoidance, even if we allow the $(1,2,1,2)$-containing case.  More precisely, for $p+q \leq 8$, any $K$-orbit closure, $(1,2,1,2)$-avoiding or not, is lci if and only if it avoids the bad patterns of Proposition \ref{prop:1212-avoiding-lci} \textit{and additionally} avoids the patterns $(1,+,2,1,2)$, $(1,2,1,+,2)$, $(1,2,1,3,2,3)$, $(1,2,2,3,1,3)$, $(1,2,1,3,3,2)$, $(1,+,2,3,2,3,1)$, $(1,2,3,2,3,+,1)$, $(1,2,3,2,3,4,4,1)$, $(1,2,2,3,4,3,4,1)$, $(1,2,3,4,2,3,4,1)$, $(1,2,3,4,3,2,4,1)$, and $(1,2,3,4,2,4,3,1)$, along with their negatives.  We do not feel confident in conjecturing that this is the complete list of bad patterns, since we have been unable to push exhaustive checks to $p+q=9$ (or even to the case $K=GL(7,\C) \times GL(2,\C)$) and determine whether any new bad patterns occur there.  However, we feel that with the evidence at hand, considering the results of \cite{Ulfarsson-Woo-11} in the analogous case of Schubert varieties, at least the following conjecture is reasonable.

\begin{conjecture}
LCI-ness of $K$-orbit closures for $(GL(p+q,\C),GL(p,\C)\times GL(q,\C))$ is characterized by pattern avoidance.
\end{conjecture}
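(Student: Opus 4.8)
The plan is to follow the template by which pattern-avoidance characterizations of local properties are established for Schubert varieties (as in \cite{Woo-Yong-06,Ulfarsson-Woo-11}), adapting it to the clan combinatorics. Two ingredients are needed: a \emph{heredity} statement and a \emph{finiteness} statement. The conjectural bad-pattern list would then be the union of the $35$ patterns of Proposition \ref{prop:1212-avoiding-lci} with the $12$ further patterns found computationally through $p+q=8$ (and possibly more), and the content of a proof would be to show that (i) each such pattern really does force non-lci-ness whenever it appears, and (ii) no infinite family of further minimal bad patterns exists.

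For heredity, one wants: if the clan $\gamma$ contains the clan $\gamma'$ as a pattern and $Y_{\gamma'}$ is non-lci, then $Y_\gamma$ is non-lci. The cleanest route is a clan analogue of the Woo--Yong interval-pattern-embedding theorem, namely that pattern containment of clans induces a local model isomorphism up to an affine factor: there is a closed $K$-orbit $Q_\tau \subseteq Y_\gamma$ and a closed orbit $Q_{\tau'}$ in the orbit closure for the smaller symmetric pair attached to $\gamma'$ such that the patch of $Y_\gamma$ at $\tau$ is, after deleting a polynomial-ring factor, isomorphic to the patch of $Y_{\gamma'}$ at $\tau'$. Since being lci is local and stable under crossing with affine space, non-lci-ness would then propagate. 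For $(1,2,1,2)$-avoiding $\gamma$ this is essentially built into the path-diagram analysis behind Proposition \ref{prop:1212-avoiding-lci}, via Lemma \ref{lem:richardson-p-criterion} and the Knutson--Woo--Yong reduction to Grassmannian Schubert varieties. In general I would try to extract the local model isomorphism from the patch-ideal description of \cite{Wyser-Yong-13} together with \cite[Theorem 2.5]{Wyser-13}, by showing that the ``obvious'' generators of $I_{\gamma,\tau}$ restrict compatibly to those of $I_{\gamma',\tau'}$ along the coordinate subspace obtained by freezing the $z$-variables indexed by positions not used by the embedded pattern. Establishing the generation conjecture \cite[Conjecture 4.4]{Wyser-Yong-13}, or at least enough of it to run this restriction argument, is the technical heart of this step.

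For finiteness, one must show that the set of clans that are non-lci and pattern-minimal with this property is finite. The standard mechanism is a \emph{deletion} lemma: if $\gamma$ is non-lci and has length exceeding an explicit bound, then some single-character deletion of $\gamma$ is again non-lci. Iterating then bounds the size of minimal bad patterns. For $(1,2,1,2)$-avoiding clans the path diagram makes the effect of a deletion on singular corners and leg lengths completely explicit, and the deletion lemma is immediate; for $(1,2,1,2)$-containing clans, where $Y_\gamma$ is cut from a Richardson variety by the projection conditions of \cite[Theorem 2.5]{Wyser-13} and Lemma \ref{lem:richardson-p-criterion} is unavailable, one would instead argue directly at the level of patch ideals, showing that a character lying ``far from the interesting part'' of $\gamma$ can be deleted without changing whether the relevant patch is a complete intersection (equivalently, whether the codimension of $R/I_{\gamma,\tau}$ equals the first Betti number of its minimal free resolution).

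The main obstacle is precisely the $(1,2,1,2)$-containing case: because those orbit closures are not Richardson varieties, none of the reductions used elsewhere in this paper apply, and one is forced to work with the only conjecturally known defining ideals of the patches. Until \cite[Conjecture 4.4]{Wyser-Yong-13} is settled, or a substitute local model for these orbit closures (e.g.\ via their description as intersections of Richardson and Hessenberg varieties) is found, both the heredity and the finiteness steps rest on unverified foundations — which is why the present statement remains a conjecture. A more modest but attainable intermediate target would be to prove the conjecture for clans containing exactly one occurrence of the pattern $(1,2,1,2)$, where the Hessenberg-variety description is simplest and the patch ideals are most tractable.
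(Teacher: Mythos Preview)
The paper does not prove this statement: it is explicitly labeled a \emph{Conjecture}, supported only by exhaustive computation through $p+q=8$ and the analogy with \cite{Ulfarsson-Woo-11}. There is no proof to compare against.

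Your proposal is not a proof either, and to your credit you say so: you lay out a plausible two-step strategy (heredity via a clan analogue of interval-pattern embedding, plus finiteness via a deletion lemma) and then correctly identify the obstruction, namely that for $(1,2,1,2)$-containing clans the orbit closures are not Richardson varieties, so Lemma~\ref{lem:richardson-p-criterion} and the Knutson--Woo--Yong machinery do not apply, and one is forced back onto the patch ideals of \cite{Wyser-Yong-13}, whose generation is itself only conjectural. This is an accurate diagnosis and matches the paper's own assessment; indeed, the Questions the paper poses at the end of Section~3 ask precisely for the kind of $K$-orbit analogue of interval pattern avoidance that your heredity step would require.

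So there is no error to flag, but also no proof: what you have written is a reasonable research program, contingent on resolving \cite[Conjecture~4.4]{Wyser-Yong-13} or finding an alternative local model for the $(1,2,1,2)$-containing orbit closures. Until one of those is available, both your heredity and finiteness steps remain genuinely open, exactly as you acknowledge.
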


\subsection{Gorensteinness}
Recall that a local ring $(R, \mathfrak{m},\mathbbm{k})$ is said to be \textbf{Cohen-Macaulay} if $\text{Ext}_R^i(\mathbbm{k},R) = 0$ for $i < \dim(R)$.  $R$ is said to be \textbf{Gorenstein} if it is Cohen-Macaulay and, additionally, $\text{dim}_{\mathbbm{k}}\text{Ext}_R^{\dim(R)}(\mathbbm{k},R) = 1$.  A variety or scheme $X$ is said to be \textbf{Gorenstein at the point $p$} if the local ring $\caO_{X,p}$ is Gorenstein.  $X$ is said simply to be \textbf{Gorenstein} if it is Gorenstein at every point.  Equivalently, a variety $X$ is Gorenstein if it is Cohen-Macaulay (meaning all local rings are Cohen-Macaulay) and its canonical sheaf is a line bundle.

To determine Gorensteinness of $(1,2,1,2)$-avoiding $K$-orbit closures, we need to determine Gorensteinness of Grassmannian Richardson varieties, which requires an understanding of the Gorenstein locus of a Grassmannian Schubert variety.  Again, this is known, thanks to the following proposition of N. Perrin \cite{Perrin-09}.

\begin{proposition}
The Schubert variety $X^v$ is Gorenstein at $u$ if and only if every inner corner of the path of $v$ not containing a point of $u$ has equal right and left leg lengths.
\end{proposition}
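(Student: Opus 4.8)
The plan is to reduce the statement, exactly as in the treatments of smoothness and lci-ness above, to a fact about Grassmannian Schubert varieties, and then to appeal to the known description of their Gorenstein locus; since that description is itself Perrin's theorem \cite{Perrin-09}, one may simply carry out the reductions and then cite Perrin, but I will also indicate a plausible route to his result. First I would pass from $G/B$ to the Grassmannian. Let $p$ be the descent of the Grassmannian permutation $v$, let $\lambda$ be the partition traced out by the path of $v$ and $\mu$ the one traced out by the path of $u$, so that $\mu\subseteq\lambda$. Then $X^v\subseteq G/B$ is the total space of a Zariski-locally trivial fibration over the Grassmannian Schubert variety $X_\lambda$ in $Gr(p,n)$ whose fibers are smooth partial flag varieties, and under this fibration the point $u$ lies over the $T$-fixed point $p_\mu$ of $X_\lambda$. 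Since Gorensteinness at a point is unchanged under smooth morphisms and under products with smooth varieties, $X^v$ is Gorenstein at $u$ if and only if $X_\lambda$ is Gorenstein at $p_\mu$; and since Schubert varieties are Cohen-Macaulay, the latter is equivalent to the dualizing sheaf $\omega_{X_\lambda}$ being invertible at $p_\mu$.

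Next I would convert the invertibility of $\omega_{X_\lambda}$ into a question about a single Weil divisor. There is a standard formula $\omega_{X_\lambda}\cong\caO_{X_\lambda}(-\partial X_\lambda)\otimes\mathcal{L}$, where $\partial X_\lambda=\bigcup_{\mu'\lessdot\lambda}X_{\mu'}$ is the boundary --- a reduced Weil divisor with one component $X_{\mu'}$ for each removable box of $\lambda$, equivalently each outer corner of its path --- and $\mathcal{L}$ is the restriction to $X_\lambda$ of a genuine line bundle on $Gr(p,n)$, hence Cartier everywhere. Consequently $X_\lambda$ is Gorenstein at $p_\mu$ if and only if the Weil divisor $\partial X_\lambda$ is Cartier at $p_\mu$, and since $X_\lambda$ is normal and the Picard group of a local ring vanishes, this holds if and only if the class of $\partial X_\lambda$ is trivial in the local class group $\mathrm{Cl}(\caO_{X_\lambda,p_\mu})$.

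The substantive step is then to compute this local class group together with the class of $\partial X_\lambda$ in it, by analyzing the local structure of $X_\lambda$ at $p_\mu$. Intersecting with the opposite big cell presents $X_\lambda$ near $p_\mu$ as an explicit affine determinantal scheme, and --- by a Kempf-type transverse-slice argument, made precise via Perrin's ``peeling'' of minuscule Schubert singularities --- this scheme splits, up to a smooth factor, as a product of ``elementary'' pieces indexed precisely by the singular corners of the path diagram of the pair $(v,u)$, i.e.\ the inner corners of the path of $v$ not lying on the path of $u$. The elementary piece attached to a singular corner $c$ is, up to a further smooth factor, the affine cone over the Segre embedding of $\mathbb{P}^{a_c-1}\times\mathbb{P}^{b_c-1}$ --- the rank-$\le 1$ locus in $a_c\times b_c$ matrices --- where $a_c$ and $b_c$ are exactly the left and right leg lengths of $c$. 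Since the local class group of a product is the direct sum of those of the factors and $\partial X_\lambda$ distributes over the factors, it suffices to decide when the boundary of each elementary piece is Cartier, and for a determinantal cone this is the classical fact (see \cite{Bruns-Herzog} and the theory of determinantal rings) that $\mathbbm{k}[x_{ij}]/I_2$ of a generic $a\times b$ matrix is Cohen-Macaulay always but Gorenstein exactly when $a=b$ --- equivalently, the canonical class $(-a,-b)$ of $\mathbb{P}^{a-1}\times\mathbb{P}^{b-1}$ becomes trivial in the class group of the cone precisely when $a=b$. Putting this together, $X^v$ is Gorenstein at $u$ if and only if $a_c=b_c$ for every singular corner $c$, which is the assertion of the proposition.

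The hard part is unquestionably this last step: establishing the local product decomposition and correctly matching each elementary factor, with its pair of leg-length parameters, to the corresponding singular corner. This is exactly the content of \cite{Perrin-09}, and a mild additional subtlety is that at a deep point $p_\mu$ several inner corners can interact, so the transverse-slice analysis must keep careful track of which corner produces which factor; Perrin's decomposition of minuscule Schubert varieties is precisely the tool that organizes this bookkeeping. The first two reductions, by contrast, are routine, so in practice one would perform them and then simply quote Perrin.
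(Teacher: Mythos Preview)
The paper does not prove this proposition at all; it is stated as ``a proposition of N.~Perrin \cite{Perrin-09}'' and used as a black box, exactly like the analogous results of \cite{LW-90} for smoothness and \cite{Darayon-LCIlocus} for lci-ness earlier in the section. So there is no paper proof to compare against: your sketch is strictly more than what the paper provides, and your concluding sentence --- that in practice one performs the easy reductions and then quotes Perrin --- is precisely what the paper does (minus the reductions, which are not even needed since the statement is already Perrin's).

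As for the sketch itself: the first two reductions (passage to the Grassmannian via the smooth fibration $G/B\to Gr(p,n)$, and the formula $\omega_{X_\lambda}\cong\caO_{X_\lambda}(-\partial X_\lambda)\otimes\mathcal{L}$ reducing Gorensteinness to Cartier-ness of the boundary) are correct and standard. The third step is where one should be cautious. The claim that the local slice at $p_\mu$ splits, up to smooth factors, as a \emph{product} of rank-one determinantal cones indexed by the singular corners is stronger than what is literally true for a general Grassmannian Schubert variety at a deep $T$-fixed point; the singular corners do not in general sit in ``independent'' blocks of the affine patch. Perrin's actual argument in \cite{Perrin-09} does not proceed via such a na\"ive product decomposition but rather through an explicit resolution and a computation of the divisor of the canonical class along the boundary components, tracking multiplicities that turn out to be exactly the leg lengths. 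Your heuristic correctly predicts the answer and isolates the right invariants, but if you were to write this out carefully you would likely find yourself re-deriving Perrin's computation rather than simply invoking the Gorensteinness of $k[x_{ij}]/I_2$ for square matrices. Since you already flag this as the hard step and defer to \cite{Perrin-09}, the proposal is fine as an outline, but the product-of-cones picture should be presented as motivation rather than as the actual mechanism of proof.
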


Using Lemma~\ref{lem:richardson-p-criterion}, we can restate Perrin's criterion as follows:

\begin{proposition}\label{prop:gorenstein-test}
Let $u$ and $v$ be Grassmannian permutations.  Then the Richardson variety $X_{uw_0^K}^v$ is Gorenstein if and only if the left leg length equals the right leg length for all singular corners of the path diagram.
\end{proposition}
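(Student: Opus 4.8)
The plan is to reduce the Gorensteinness of $X_{uw_0^K}^v$ to two pointed Gorensteinness questions on Grassmannian Schubert varieties and then apply Perrin's criterion (the proposition immediately above) to each. Since Gorensteinness is an open multiplicative local property, Lemma~\ref{lem:richardson-p-criterion} applies: $X_{uw_0^K}^v$ is Gorenstein if and only if $X^v$ is Gorenstein at $uw_0^K$ \emph{and} the opposite Schubert variety $X_{uw_0^K}$ is Gorenstein at $v$, the latter being the same (via left multiplication by $w_0$) as asking that $X^{w_0 u w_0^K}$ be Gorenstein at $w_0 v$. So everything comes down to analyzing these two conditions in terms of the path diagram.

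For the first condition, observe that $w_0^K \in W_K$, so the path of $uw_0^K$ coincides with the path of $u$ and, as noted in Section~\ref{sec:path-criteria}, all local properties of $X^v$ agree at $u$ and at $uw_0^K$. Perrin's proposition then says that $X^v$ is Gorenstein at $uw_0^K$ precisely when every inner corner of the path of $v$ that does not lie on the path of $u$ has equal left and right leg lengths; by definition these are exactly the singular corners on the top boundary of a component of the path diagram. For the second condition, I would use the observation recorded in Section~\ref{sec:path-criteria} that the path diagram attached to the pair $w_0 u w_0^K \geq w_0 v$ is the path diagram for $u \geq v$ turned upside down (with $p$ and $q$ interchanged): a $180^\circ$ rotation carries the path of $v$ to the (now lower) path of the flipped diagram, the path of $u$ to the (now upper) path, and the bottom boundary of each component to the top boundary of a component of the flipped diagram. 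Applying Perrin's proposition to $X^{w_0 u w_0^K}$ at $w_0 v$ and translating back, this variety is Gorenstein exactly when every singular corner on the \emph{bottom} boundary of the original path diagram has equal leg lengths. Here one must check that the rotation exchanges the two legs of an inner corner, so that the symmetric condition ``left leg length $=$ right leg length'' is invariant under the flip; this is immediate from the definition of leg length as a count of consecutive path segments on either side of the corner.

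Combining the two conditions yields the statement: $X_{uw_0^K}^v$ is Gorenstein if and only if every singular corner of the path diagram, whether on the top or on the bottom boundary, has equal left and right leg lengths. The only genuinely delicate bookkeeping is matching Perrin's ``inner corner of the path of $v$ not containing a point of $u$'' with the notion of singular corner on the top versus bottom boundary under the rotation/flip conventions, and verifying the leg-length symmetry under the flip; once these identifications are in place the result is a direct transcription of Perrin's criterion through Lemma~\ref{lem:richardson-p-criterion}. I expect the flip/convention matching to be the main place where care is needed, but no new ideas beyond those already used for the smoothness and lci criteria are required.
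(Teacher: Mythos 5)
Your proposal is correct and follows essentially the same route as the paper: the paper's proof is the single sentence ``Using Lemma~\ref{lem:richardson-p-criterion}, we can restate Perrin's criterion as follows,'' which tacitly relies on precisely the reduction to the two pointed Gorensteinness questions, Perrin's criterion applied to each, and the upside-down flip convention from Section~\ref{sec:path-criteria} that you spell out. Your careful check that the $180^\circ$ flip exchanges the two legs of a corner (so that the symmetric condition is preserved) is the right thing to verify and fills in the only detail the paper leaves implicit.
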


As examples, the Richardson varieties whose path diagrams are given in Figures \ref{fig:singular-top} and \ref{fig:singular-bottom} are non-Gorenstein, since each has a singular corner with one leg length of $1$ and one leg length of $2$.  On the other hand, the Richardson variety whose path diagram is given in Figure \ref{fig:non-lci-bottom} (which is the closure of the $K$-orbit $(1,+,-,+,-,1)$) \textit{is} Gorenstein, since all three singular corners have common leg length $1$.

\begin{remark}
The non-Gorenstein locus of a non-Gorenstein $(1,2,1,2)$-avoiding $K$-orbit closure is computable by the same procedure as that described in Section \ref{sec:sing-locus} for computing the singular locus, except we should only remove hooks at non-Gorenstein corners, rather than at all singular corners.
\end{remark}

\begin{remark}
It is clear that smooth orbit closures have no singular corners to check, meaning that the tests of Propositions \ref{prop:1212-avoiding-lci} and \ref{prop:gorenstein-test} are passed vacuously.  It is also clear that an orbit closure passing the test of Proposition \ref{prop:1212-avoiding-lci} will also pass the test of Proposition \ref{prop:gorenstein-test}.

Additionally, our described methods of computing the singular locus, the non-lci locus, and the non-Gorenstein locus makes clear that these sets are one containing the next, with the singular locus being the largest set and the non-Gorenstein locus being the smallest.

These simple observations reflect the general fact that ``smooth $\Rightarrow$ lci $\Rightarrow$ Gorenstein".
\end{remark}

For a $(1,2,1,2)$-avoiding clan $\gamma$, one can easily reformulate the check described by Proposition \ref{prop:gorenstein-test} as a check on the clan $\gamma$, without reference to the path diagram.  The statement of the resulting criterion requires a fair amount of notation and is somewhat unpleasant to check, so we do not give it here, since it seems less useful than simply looking at the path diagram for $\gamma$.   We do offer the following example which demonstrates that, unlike the properties of smoothness and lci-ness, Gorensteinness of $(1,2,1,2)$-avoiding orbit closures is \textit{not} characterized by pattern avoidance.

\begin{example}
Consider first the $(1,2,1,2)$-avoiding clan $(1,+,+,-,1)$.  Its path diagram appears in Figure \ref{fig:singular-bottom}.  As mentioned above, this clan is non-Gorenstein.

Now, let  $\gamma'$ be $(1,+,+,-,-,1)$.  Its path diagram is given as Figure \ref{fig:gorenstein}.

\begin{figure}[h]
\centering
\begin{pspicture}(3,3)
\rput[b]{-45}(0,0.5){
\psdots(0,0)(0,1)(0,2)(0,3)(1,3)(2,3)(3,3)(3,2)(2,2)(1,2)(1,1)(1,0)
\psline(0,0)(0,3)
\psline(0,3)(3,3)
\psline(0,0)(1,0)
\psline(1,0)(1,2)
\psline(1,2)(3,2)
\psline(3,2)(3,3)
}
\end{pspicture}
\caption{The Gorenstein $K$-orbit $(1,+,+,-,-,1)$}
\label{fig:gorenstein}
\end{figure}

Note that although $\gamma'$ contains the non-Gorenstein pattern $\gamma$, it \textit{is} Gorenstein, since both legs of its lone singular corner have length $2$.
\end{example}

Again, one would naturally wonder about the $(1,2,1,2)$-containing cases as well.  As was the case with lci-ness, here we are able to check Gorensteinness by computer, as we briefly describe.  As explained in Section \ref{sec:lci-ness}, one can check Gorensteinness of $Y_{\gamma}$ by checking Gorensteinness along every closed orbit contained in $Y_{\gamma}$, and the latter is equivalent to checking Gorensteinness of the patch of $Y_{\gamma}$ taken near a $T$-fixed point of a given closed orbit contained in $Y_{\gamma}$.  This can be checked in Macaulay 2 \cite{M2} using the alternative definition of Gorensteinness given above, namely that a variety is Gorenstein if and only if it is Cohen-Macaulay and its canonical sheaf is a line bundle.  In fact, all patches are automatically Cohen-Macaulay by a general result of Brion \cite[Theorem 6]{Brion-01}.  (We note, though, that Macaulay 2 \cite{M2} \textit{can} easily verify the Cohen-Macaulay property, if only as a sanity check.)  Thus the only check that need be performed is whether the rank of the canonical sheaf is $1$.  This is easily done in Macaulay 2 \cite{M2}.  We demonstrate with two examples.

\begin{example}
Consider first the $(1,2,1,2)$-containing $(3,2)$-clan $\gamma=(1,2,1,+,2)$.  It contains the closed orbit $\tau=(+,-,-,+,+)$.  As before, the patch ideal $I_{\gamma,\tau}$ can be created using the command

\centerline{\tt 
  I := computePQPatchIdeal("121+2",3,2,"+--++"),}
\noindent
and a minimal free $\Z^n$-graded resolution computed as

\centerline{\tt 
  resl:= prune res((ring I)\textasciicircum 1/I).}

The rank of the canonical sheaf is then computed using the command
 
\centerline{\tt 
  rank(source(resl.dd\_cod)),}
where \texttt{cod} is the codimension of $I$, computed as before.  We see here that the canonical sheaf has rank $2$, meaning that $Y_{\gamma}$ is not Gorenstein at $\tau$ and hence is not Gorenstein.

On the other hand, one can perform the same checks for the orbit $\gamma=(1,2,-,1,+,2)$ at every closed orbit contained in it, and one sees that the rank of the canonical sheaf is everywhere $1$, meaning that $Y_{\gamma}$ is Gorenstein.  Note that the Gorenstein pattern $(1,2,-,1,+,2)$ contains the non-Gorenstein pattern $(1,2,1,+,2)$, which shows that Gorensteinness cannot be characterized by pattern avoidance in the $(1,2,1,2)$-containing case either.
\end{example} 

\begin{remark}
Using exhaustive computer checks in the $(1,2,1,2)$-containing cases, we have attempted to find combinatorial criteria generalizing those characterizing Gorensteinness in the $(1,2,1,2)$-avoiding case which would apply equally well to all clans, but we have been unable to do so.
\end{remark}

By analogy with the case of type $A$ Schubert varieties, it is perhaps unsurprising that smoothness and lci-ness (in the $(1,2,1,2)$-avoiding case, and conjecturally in the $(1,2,1,2)$-containing case) can be characterized by pattern avoidance, while Gorensteinness cannot.  Indeed, the pattern avoidance criterion for smoothness of type $A$ Schubert varieties due to Lakshmibai-Sandhya \cite{LS-90} is well-known.  Additionally, the first author and H. {\'U}lfarrson \cite{Ulfarsson-Woo-11} have recently shown that lci-ness of Schubert varieties can also be characterized by pattern avoidance.  However, the first author and A. Yong showed in \cite{Woo-Yong-06} that Gorensteinness of Schubert varieties \textit{cannot} be characterized by ordinary pattern avoidance.  A more general notion of ``Bruhat-restricted pattern avoidance" is needed.  This was later shown in \cite{Woo-Yong-08} to be an example of the yet more general notion of ``interval pattern avoidance".  It is reasonable to wonder whether there is a similar generalization of pattern avoidance in the $K$-orbit setting.

\begin{question}
Is there a combinatorial notion in the $K$-orbit setting, analogous to interval pattern avoidance in the case of Schubert varieties, which explains or ``governs" all semicontinuously stable singularity properties of $K$-orbit closures?
\end{question}

The above question is natural not only in the case of $(G,K) = (GL(p+q,\C),GL(p,\C) \times GL(q,\C))$ that we are currently considering, but also for other symmetric pairs such as $(GL(2n,\C),Sp(2n,\C))$ and $(GL(n,\C),O(n,\C))$, where combinatorial parametrizations of the orbit sets are known, and where there are reasonable corresponding notions of pattern avoidance.

Furthermore, Billey and Braden~\cite{Billey-Braden} (partially anticipated by Bergeron and Sottile~\cite{Bergeron-Sottile-98}) give a geometric explanation for the appearance of pattern avoidance in characterizing smoothness of Schubert varieties.  Billey and Postnikov~\cite{Billey-Postnikov-05} give a uniform definition of pattern avoidance in terms of the underlying root systems that matches this geometric explanation and use it to characterize smoothness for arbitrary Schubert varieties.\footnote{The paper~\cite{Billey-Postnikov-05} actually precedes~\cite{Billey-Braden}, but was published later due to delays in the publication process.} This definition was extended to interval pattern avoidance by the first author~\cite{Woo-10}.  We therefore also ask the following.

\begin{question}
Is there a uniform combinatorial notion in the $K$-orbit setting (at least applying to the case of simply connected $G$) that gives a geometric explanation for the appearance of pattern avoidance in smoothness results?  Can this notion also be extended to a notion of interval pattern avoidance?
\end{question}

\section{Formulas for KLV Polynomials}

In this section, we give an explicit formula for KLV polynomials $P_{\tau,\gamma}(q)$ when $\gamma$ is $(1,2,1,2)$-avoiding.

A very brief sketch of the proof, which we flesh out in more detail in the coming sections, is as follows:  $Y_{\gamma}$ is the Richardson variety $X_{w_0^Ku(\gamma)^{-1}}^{v(\gamma)^{-1}}$ by Theorem \ref{thm:richardson-theorem}, and the KLV polynomial $P_{\tau,\gamma}(q)$ is the local intersection homology ($IH$) Poincar\'{e} polynomial for $Y_{\gamma}$ at a point of $Q_{\tau}$, as we saw in Section \ref{ssec:geometric-klv}.  By results of \cite{Knutson-Woo-Yong-13}, the $IH$ Poincar\'{e} polynomial for a Richardson variety is a product of those for the individual Schubert varieties, each of which is an ordinary KL polynomial.  Finally, the KL polynomials in question are those for Grassmannian Schubert varieties, and an explicit formula for such KL polynomials is given explicitly by Lascoux--Sch\"{u}tzenberger in \cite{LS-81}.

\subsection{The Lascoux--Sch\"{u}tzenberger formula for cograssmannian KL polynomials}\label{ssec:ls-theorem}
We start by describing the aforementioned rule of Lascoux--Sch\"utzenberger for calculating $P_{v,w}(q)$ when $w$ is cograssmannian. Our account will be closer in spirit to that of \cite{JW-13} than to the original~\cite{LS-81}.

Let $w \in S_n$ be a \textbf{cograssmannian} permutation, meaning one with at most one right ascent.  Let $p$ be the location of this ascent.  Just as we did with Grassmannian permutations in Section \ref{sec:path-criteria}, we can associate to $w$ a lattice path in a $p \times (n-p)$ rectangle, going from the southwest corner to the northeast, by moving up at the $i$-th step if $w^{-1}(i) \leq p$ and moving right otherwise.  Note that this is the same as the path for the Grassmannian permutation $ww_0^K$, with $w_0^K$ the long element of $W_K = S_p \times S_{n-p}$.

Following \cite{LS-81}, we use this lattice path to associate a rooted tree $\mathcal{T}(w)$ to $w$.  First write a string of parentheses for the lattice path, replacing up steps with ``('' and right steps with ``)``.  Each matching pair of parentheses ``($\ldots$)'' will be a vertex of the tree, and vertex $V$ is a descendent of $V'$ if the parentheses corresponding to $V$ are nested inside the parentheses corresponding to $V'$.  Finally, add a root vertex (not corresponding to any parentheses) which will be the ancestor of every vertex.

Given any permutation $x$, we associate a path to $x$ in the same way by drawing a lattice path in the $p \times (n-p)$ rectangle from the southwest corner to the northeast corner by the same rules.  (One can do this even if $x$ is neither Grassmannian nor cograssmannian.  This process recovers the path for $x'$, where $x'$ is the maximal length coset representative for $xW_K$.  It is well known that in this case $P_{x,w}(q)=P_{x',w}(q)$.)

To each leaf of $\mathcal{T}(w)$, we assign a nonnegative integer $c$ based on $x$, which we will call the {\bf capacity} of the leaf.  A leaf of $\mathcal{T}(w)$ is associated to a consecutive pair ``()'' of parentheses, which corresponds to an inner corner of $w$'s path.  Say that this inner corner is at $(a,b)$.  Assuming $x \leq w$, so that the path for $w$ lies weakly southeast of that for $x$, let the capacity $c$ of the corresponding leaf be the unique nonnegative integer for which $(a-c,b+c)$ is on the path for $x$.  Pictorially, in our $45^{\circ}$-rotated path diagrams, the capacity is easy to read off as the vertical distance from an inner corner on the lower path to the upper path( note that this capacity is nonzero if and only if the inner corner is a ``singular corner" by our earlier definition).

As an example, let $w = 986517432$, and let $x$ be any permutation in the same left $W_K$ coset as $764219853$.  The rotated path diagram is given in Figure \ref{fig:first-rotated-diagram}.  We reproduce it here as Figure \ref{fig:first-LS-diagram} with the inner corners on the bottom path indicated, as well as their vertical distances to the upper path.

\begin{figure}[h]
\centering
\begin{pspicture}(4,5)
\rput[b]{-45}(0,2.5){
\psdots(0,0)(0,2)(1,2)(1,3)(2,3)(2,5)(3,5)(4,5)(1,1)(2,1)(3,1)(3,2)(4,3)(4,4)
\dotnode[dotstyle=o,dotsize=0.195](0,1){A}
\dotnode[dotstyle=o,dotsize=0.195](3,3){B}
\dotnode(2,4){C}
\psline(0,0)(0,0.90)
\psline(0,1.10)(0,2)
\psline(0,2)(1.05,2)
\psline(1,2)(1,3.05)
\psline(1,3)(2.05,3)
\psline(2,3)(2,5.05)
\psline(2,5)(4.05,5)
\psline(0.10,1)(3.05,1)
\psline(3,1)(3,2.90)
\psline(3.10,3)(4.05,3)
\psline(4,3)(4,5.05)
}
\ncline[linestyle=dotted]{B}{C}
\ncput*{1}
\nput*[labelsep=0.2]{175}{A}{0}
\end{pspicture}
\caption{The path diagram for $w=986517432$, $x=764219853$}
\label{fig:first-LS-diagram}
\end{figure}
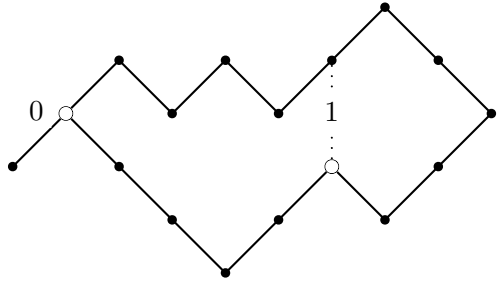

The word corresponding to $w$ is ``()))(()((".  The corresponding rooted tree, with leaf capacities indicated, is shown in Figure \ref{fig:first-tree}.

\begin{figure}[h]
\centering
\begin{pspicture}(6,4)
\psdots(3,0)(3,1)(2,2)(1,3)(4,2)
\dotnode(0,4){A}
\dotnode(5,3){B}
\psline(3,0)(3,1)
\psline(3,1)(0,4)
\psline(3,1)(5,3)
\nput*[labelsep=0.2]{175}{A}{0}
\nput*[labelsep=0.2]{5}{B}{1}
\end{pspicture}
\caption{The rooted tree for $w=986517432$, $x=764219853$}
\label{fig:first-tree}
\end{figure}
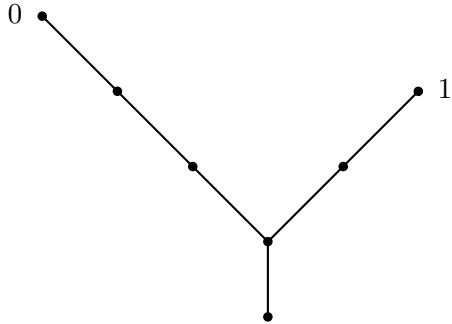

Now let $A_{x,w}$ be the set of edge labellings of $\mathcal{T}(w)$ with entries in $\mathbb{Z}_{\geq0}$ satisfying the following:
\begin{enumerate}
\item Labels weakly increase along any path from the root to a leaf.
\item The label on any edge adjacent to a leaf does not exceed the capacity of the leaf.
\end{enumerate}

Given $t\in A_{x,w}$, let $|t|$ denote the sum of the labels.  Then the formula of \cite{LS-81} is as follows:
\begin{theorem}[\cite{LS-81}]\label{thm:LS-KL}
For any cograssmannian $w \in S_n$ and for any $x$,
\[ P_{x,w}(q)=\sum_{t\in A_{x,w}} q^{|t|}. \]
\end{theorem}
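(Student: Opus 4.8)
The plan is to reduce, via the standard functorialities of Kazhdan--Lusztig polynomials, to a computation of local intersection cohomology of a Grassmannian Schubert variety, and then to carry that computation out using an explicit small resolution. For the reduction: since $P_{x,w}(q)=P_{x',w}(q)$ for $x'$ the longest element of the coset $xW_K$ (as noted in the text), and since the path of $x$, the tree $\caT(w)$, and the leaf capacities all depend only on $x'$, we may assume $x$ is longest in $xW_K$; if moreover $x\not\le w$ then $P_{x,w}=0$ and no labeling can meet the capacity constraint, so $A_{x,w}=\emptyset$ and both sides vanish. Otherwise, a cograssmannian $w$ with ascent at $p$ is itself longest in $wW_K$ for $W_K=S_p\times S_{n-p}$, and with $P\supseteq B$ the parabolic whose Weyl group is $W_K$ it is standard that $P_{x,w}$ equals the local intersection-cohomology Poincar\'{e} polynomial of the Schubert variety $\overline X\subseteq G/P$ at a point $p_x$ of its Schubert cell; here $\overline X$ is indexed by the partition $\lambda$ cut out by the path of $w$ in the $p\times(n-p)$ rectangle and $p_x$ by the partition $\mu\subseteq\lambda$ cut out by the path of $x$. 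When $x=w$ the two paths coincide, every capacity is $0$, and both sides equal $1$; this is the base case.

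For the main step I would invoke Zelevinsky's small resolutions of $\overline X$ \cite{Zelevinsky-83}. A choice of how to build $\lambda$ from a nested sequence of rectangular blocks --- matching the nesting of parentheses in the word of $w$, hence the structure of $\caT(w)$ --- yields a resolution $\pi\colon Z\to\overline X$ that is \emph{small}, so that $R\pi_*\Q_Z[\dim Z]\cong IC(\overline X)$ and consequently $IH^k_{p_x}(\overline X)\cong H^k(\pi^{-1}(p_x);\Q)$. The theorem then amounts to the claim that the fiber $\pi^{-1}(p_x)$ admits an affine paving whose cells are indexed by the labelings $t\in A_{x,w}$, the cell of $t$ having complex dimension $|t|$. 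I would prove this by presenting $Z$ as an iterated fibration indexed by the vertices of $\caT(w)$ whose fibers are Grassmannians (in particular, the projective spaces $\mathbb{P}^{c}$ attached to the leaves of capacity $c$), intersecting with $\pi^{-1}(p_x)$ one level at a time, and paving each stage by Bruhat cells: the ``weakly increasing along root-to-leaf paths'' condition encodes the compatibility of the choices made at nested levels, while the ``bounded by the capacity of the leaf'' condition encodes the incidence of the chosen subspace with the reference flag for $p_x$ --- the capacity being, as observed in the text, exactly the vertical distance in the rotated path diagram from the inner corner on the path of $x$ to the path of $w$. Reading off the cell dimensions then gives $\sum_k\dim IH^{2k}_{p_x}(\overline X)\,q^k=\sum_{t\in A_{x,w}}q^{|t|}$.

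The main obstacle is precisely this identification: making Zelevinsky's resolution explicit enough to (i) describe its fiber over an arbitrary Schubert-cell point, (ii) check that the natural stratification of that fiber is an affine paving, and (iii) establish the bijection between its strata (with the right dimensions) and $A_{x,w}$ under the two constraints. A more self-contained alternative, closer in spirit to \cite{LS-81} and to \cite{JW-13}, is purely recursive: set $N_{x,w}(q):=\sum_{t\in A_{x,w}}q^{|t|}$, prune a leaf of $\caT(w)$ --- equivalently, delete a removable box of $\lambda$, which replaces $w$ by a shorter cograssmannian permutation --- express $N_{x,w}$ in terms of the analogous sums for the pruned tree, and match this against the left Kazhdan--Lusztig recursion for $P_{x,w}$ under the corresponding move on Schubert varieties. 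The difficulty there migrates to the $\mu$-coefficient corrections $\sum_z\mu(z,\cdot)\,q^{(\ell(\cdot)-\ell(z)-1)/2}P_{x,z}$ appearing in that recursion: one must pin down exactly which $z$ contribute --- using the known (rational) smoothness criterion for cograssmannian Schubert varieties, i.e.\ Proposition~\ref{prop:grass-sing-crit} together with the equivalence of \cite{Deodhar-85} --- and show that these corrections either vanish or are absorbed precisely into the pruning recursion for $N_{x,w}$. Either way, the combinatorial heart is the same bijection between edge-labelings of $\caT(w)$ and the pieces --- geometric cells or recursive summands --- out of which the polynomial is assembled.
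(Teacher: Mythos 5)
The paper does not prove Theorem~\ref{thm:LS-KL}; it cites it outright from Lascoux--Sch\"utzenberger and merely restates it in the rotated-path/tree language of \cite{JW-13}, so there is no paper argument to compare yours against. Evaluated on its own terms, your sketch identifies the two standard routes correctly: the small-resolution route is essentially \cite{Zelevinsky-83} (realize the cograssmannian Schubert variety as the image of a tower of Grassmannian bundles chosen to match the nesting in the parenthesization, then pave the fiber over a cell point and count cells by $A_{x,w}$), and the recursive route is closer to the original argument in \cite{LS-81}. You are also right that the substance in either approach is the bijection between edge-labelings of $\mathcal{T}(w)$ and cells (or recursive summands), and neither is carried out here --- so what you have is a correct high-level sketch, not a proof.

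Two points to fix if you pursue this. First, a slip: the leaves of $\mathcal{T}(w)$ correspond to inner corners of the path of $w$ (the \emph{lower} path in the rotated picture, since $x\le w$ forces $w$'s path to lie weakly southeast of $x$'s), and the capacity of a leaf is the vertical distance from that corner \emph{up} to the path of $x$; you state it with the roles of $x$ and $w$ interchanged. Second, in Zelevinsky's resolution the fibers of the successive stages are Grassmannians of varying sizes determined by the chosen block decomposition, not projective spaces in general; the appearance of $\mathbb{P}^{c}$'s at the leaves is a consequence of the paving once the stratification of $\pi^{-1}(p_x)$ has been intersected level by level with the reference flag, not something one can take as an input. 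That is exactly the bookkeeping you flag as the ``main obstacle,'' and it is where the real work is.
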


As an example, there are three valid edge labellings of the rooted tree shown in Figure \ref{fig:first-tree}.  Each of them is shown in Figure \ref{fig:edge-labellings}.  Thus Theorem \ref{thm:LS-KL} tells us that
\[ P_{764219853,986517432}(q) = 1 + q + q^2. \]

\begin{figure}[h]
\centering
\begin{pspicture}(20,4)
\dotnode(1,0){A}
\dotnode(1,1){B}
\dotnode(0,2){C}
\dotnode(-1,3){D}
\dotnode(-2,4){E}
\dotnode(2,2){F}
\dotnode(3,3){G}
\dotnode(8,0){H}
\dotnode(8,1){I}
\dotnode(7,2){J}
\dotnode(6,3){K}
\dotnode(5,4){L}
\dotnode(9,2){M}
\dotnode(10,3){N}
\dotnode(15,0){O}
\dotnode(15,1){P}
\dotnode(14,2){Q}
\dotnode(13,3){R}
\dotnode(12,4){S}
\dotnode(16,2){T}
\dotnode(17,3){U}
\ncline{A}{B}
\naput*{0}
\ncline{B}{C}
\naput*{0}
\ncline{C}{D}
\naput*{0}
\ncline{D}{E}
\naput*{0}
\ncline{B}{F}
\nbput*{0}
\ncline{F}{G}
\nbput*{0}
\ncline{H}{I}
\naput*{0}
\ncline{I}{J}
\naput*{0}
\ncline{J}{K}
\naput*{0}
\ncline{K}{L}
\naput*{0}
\ncline{I}{M}
\nbput*{0}
\ncline{M}{N}
\nbput*{1}
\ncline{O}{P}
\naput*{0}
\ncline{P}{Q}
\naput*{0}
\ncline{Q}{R}
\naput*{0}
\ncline{R}{S}
\naput*{0}
\ncline{P}{T}
\nbput*{1}
\ncline{T}{U}
\nbput*{1}
\end{pspicture}
\caption{Valid edge labellings for $w=986517432$, $x=764219853$}
\label{fig:edge-labellings}
\end{figure}
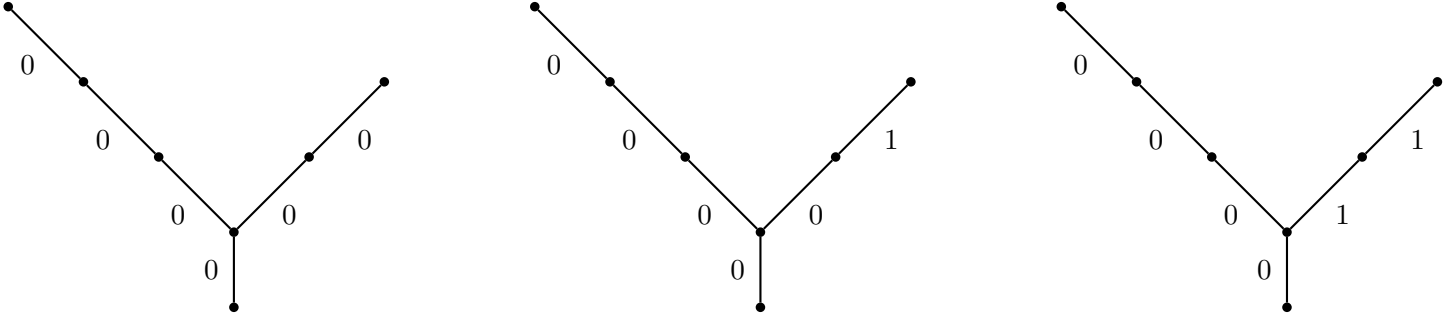

\begin{remark}\label{rmk:upside-down-ls}
Note that Theorem \ref{thm:LS-KL} can also directly calculate $P_{w_0w, w_0v}(q)$ when $v$ is a Grassmannian permutation, and $w$ is arbitrary.  Indeed, when $v$ is a Grassmannian permutation with descent at $p$, then $w_0v$ is a cograssmannian permutation with descent at $q=n-p$.  Thus Theorem \ref{thm:LS-KL} applies.  One can associate to $v$ a lattice path in the $p \times q$ grid as described in Section \ref{sec:path-criteria}.  We can then directly construct $\mathcal{T}(w_0v)$ by matching parentheses the the ``wrong'' way, so a vertex corresponds to a matching ``)$\ldots$('' pair, and a leaf corresponds to an inner corner of $v$'s path.  Similarly, we can assign capacities to $\mathcal{T}(w_0v)$ corresponding to $w_0w$ by constructing a path for $w$ (now southeast of that for $v$) and looking at the distance from each inner corner of $v$'s path to $w$'s path.  Essentially, this amounts to doing the computation for $P_{w_0w,w_0v}(q)$ (which would ordinarily be done in a $q \times p$ rectangle) after rotating the diagrams by $180^{\circ}$, which is equivalent to flipping the path diagrams upside down and switching left and right.
\end{remark}

As an example of this last observation, consider the case $w=156892347$ and $v=124673589$.  Then the appropriate diagram for calculating $P_{w_0w, w_0v}(q)$ is that given in Figure \ref{fig:first-LS-diagram}, but here we consider different corners (the outer corners along the top path) and their vertical distances to the lower path, as indicated in Figure \ref{fig:second-LS-diagram}.  The word for the top path is ``(()()(())", and, matching the wrong way as described above, we obtain the rooted tree shown in Figure \ref{fig:second-tree}.

\begin{figure}[h]
\centering
\begin{pspicture}(4,5)
\rput[b]{-45}(0,2.5){
\psdots(0,0)(0,1)(0,2)(1,3)(2,5)(3,5)(4,5)(1,1)(3,1)(4,3)(4,4)
\dotnode[dotstyle=o,dotsize=0.195](1,2){A}
\dotnode[dotstyle=o,dotsize=0.195](2,3){B}
\dotnode(2,1){C}
\dotnode(3,2){D}
\psline(0,0)(0,2)
\psline(0,2)(0.90,2)
\psline(1,2.10)(1,3)
\psline(1,3)(1.90,3)
\psline(2,3.10)(2,5)
\psline(2,5)(4,5)
\psline(0,1)(3,1)
\psline(3,1)(3,3)
\psline(3,3)(4,3)
\psline(4,3)(4,5)
}
\ncline[linestyle=dotted]{A}{C}
\ncput*{1}
\ncline[linestyle=dotted]{B}{D}
\ncput*{1}
\end{pspicture}
\caption{The path diagram for $v=124673589$, $w=156892347$}
\label{fig:second-LS-diagram}
\end{figure}
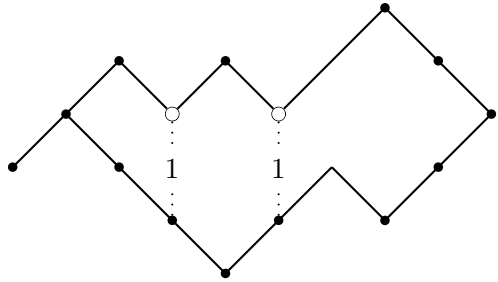

\begin{figure}[h]
\centering
\begin{pspicture}(4,3)
\psdots(2,0)(2,1)(1,2)(3,2)
\dotnode(0,3){A}
\dotnode(4,3){B}
\psline(2,0)(2,1)
\psline(2,1)(0,3)
\psline(2,1)(4,3)
\nput*[labelsep=0.2]{175}{A}{1}
\nput*[labelsep=0.2]{5}{B}{1}
\end{pspicture}
\caption{The rooted tree for $v=124673589$, $w=156892347$}
\label{fig:second-tree}
\end{figure}
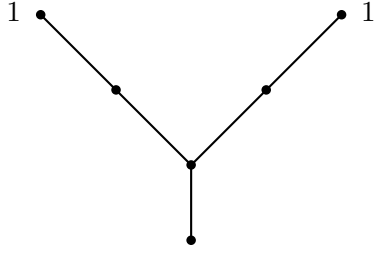

One checks that this tree has $10$ valid edge labellings and that Theorem \ref{thm:LS-KL} gives the KL polynomial
\[ P_{954218763,986437521}(q) = 1+2q+3q^2+2q^3+q^4+q^5. \]

\subsection{Statement and proof of the theorem}
We can now give the statement and proof of our formula for the KLV polynomial $P_{\tau,\gamma}(q)$ for $(1,2,1,2)$-avoiding clans $\gamma$.  

We start with the path diagram corresponding to the clan $\gamma$, described in Section \ref{sec:path-criteria}.  We then add to it another path diagram for the clan $\tau$, constructed in the same way that the path diagram for $\gamma$ is.  Namely, we start from the southwest of the $p \times q$ grid and trace two paths, following these rules at step $i$:
\begin{enumerate}
	\item If $\tau_i=+$, then both paths move up;
	\item If $\tau_i=-$, then both paths move right;
	\item If $\tau_i$ is a first occurrence, then the bottom path moves right, while the top path moves up; and
	\item If $\tau_i$ is a second occurrence, the bottom path moves up, while the top path moves right.
\end{enumerate}

Note that this makes sense even if $\tau$ is not $(1,2,1,2)$-avoiding, and if $\tau \leq \gamma$, the new path diagram for $\tau$ fits within that for $\gamma$.  (This follows easily from \cite[Theorem 2.5]{Wyser-13}.)  Note further that the top path of the new path diagram for $\tau$ is that for the Grassmannian permutation $v(\tau)$, while the bottom path of the new diagram is that for the permutation $u(\tau)$, where $v(\tau)$ and $u(\tau)$ are as described in Section \ref{ssec:clans}.

We next construct two rooted trees in the way just described in Section \ref{ssec:ls-theorem}:  For the first, we construct a word in `(' and `)' for the bottom path, and a rooted tree from that word by matching ``()", with leaf capacities determined by the distances from singular corners on the bottom path for $\gamma$ to the bottom path for $\tau$.  For the second, we construct a word for the \textit{top} path, and a rooted tree from that word by matching ``)(", with leaf capacities determined by the distances from singular corners on the top path for $\gamma$ to the top path for $\tau$.

Define the set $\mathcal{T}_{\tau,\gamma}$ as the set of all pairs consisting of a valid edge labelling of the first rooted tree, combined with a valid edge labelling of the second, with valid edge labellings defined just as in Section \ref{ssec:ls-theorem}.  For such a pair $t$, define $|t|$ to be the sum of labels (taken over both edge labellings).  Then we have the following theorem.

\begin{theorem}\label{thm:klv-formula}
Given clans $\tau \leq \gamma$ in Bruhat order, with $\gamma$ avoiding $(1,2,1,2)$, the KLV polynomial satisfies
\[ P_{\tau,\gamma}(q)=\sum_{t\in\mathcal{T}_{\tau,\gamma}} q^{|t|}. \]
\end{theorem}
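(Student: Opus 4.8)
The plan is to convert $P_{\tau,\gamma}(q)$ into a product of two ordinary cograssmannian Kazhdan--Lusztig polynomials and then evaluate each factor using the Lascoux--Sch\"utzenberger rule (Theorem~\ref{thm:LS-KL}). First I would use Theorem~\ref{thm:richardson-theorem} to write $Y_\gamma = X_u^v$ with $u = w_0^K u(\gamma)^{-1}$ and $v = v(\gamma)^{-1}$, and recall from Section~\ref{ssec:geometric-klv} that $P_{\tau,\gamma}(q)$ is the local $IH$ Poincar\'e polynomial of $Y_\gamma$ at any point of $Q_\tau$. I would evaluate it at the distinguished representative $F_\bullet$ of $Q_\tau$, which by Lemma~\ref{lem:t-fixed-representative} lies in $X_{u'}^0 \cap X_0^{v'}$ with $v' = v(\tau)^{-1}$ and with $(u')^{-1}$ in the same left $W_K$-coset as $u(\tau)$. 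Thus $F_\bullet$ lies in one of the open strata of the Richardson decomposition of $X_u^v$; since $IH$ Poincar\'e polynomials are constant along strata and (as recalled in Section~\ref{sec:path-criteria}) the local structure of a Schubert variety is insensitive to multiplication by $W_K$, the $W_K$-coset ambiguity for $u'$ is harmless.

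Next I would invoke the Knutson--Woo--Yong multiplicativity of $IH$ Poincar\'e polynomials for Richardson varieties~\cite{Knutson-Woo-Yong-13}: the local $IH$ Poincar\'e polynomial of $X_u^v$ at $F_\bullet$ is the product of that of $X^v$ at $F_\bullet$ (which depends only on the $B^-$-cell $X_0^{v'}$) and that of $X_u$ at $F_\bullet$ (which depends only on the $B$-cell $X_{u'}^0$). Using the geometric interpretation of ordinary KL polynomials~\cite{KL-80}, the isomorphism $X^{w_0 w w_0}\cong X^w$ carrying $x$ to $w_0 x w_0$, and the identity $P_{x,w}=P_{x^{-1},w^{-1}}$ --- the same manipulations carried out in the paragraphs following Lemma~\ref{lem:inverse-property}, now applied to the graded $IH$ data rather than to a single $\mathcal{P}$-locus --- the factor coming from $X_u$ should become $P_{x,\,u(\gamma)w_0^K}(q)$ for any $x$ in the $W_K$-coset of $u(\tau)$, and the factor coming from $X^v$ should become $P_{w_0 v(\tau),\,w_0 v(\gamma)}(q)$. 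Since $u(\gamma)$ and $v(\gamma)$ are Grassmannian, $u(\gamma)w_0^K$ and $w_0 v(\gamma)$ are cograssmannian, so Theorem~\ref{thm:LS-KL} applies to each factor.

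I would then evaluate the two factors. For $P_{x,u(\gamma)w_0^K}(q)$ the rule applies directly: the path of $u(\gamma)w_0^K$ is the bottom path of the path diagram of $\gamma$, the tree is built by matching ``()'', and the capacity of each leaf is the vertical distance from the corresponding inner corner of that path to the bottom path of the path diagram of $\tau$ (nonnegative because $\tau \le \gamma$ forces containment of path diagrams, by \cite[Theorem~2.5]{Wyser-13}); leaves of zero capacity are inert, so only the singular corners on the bottom path actually contribute. For $P_{w_0 v(\tau), w_0 v(\gamma)}(q)$ I would instead use Remark~\ref{rmk:upside-down-ls} with its ``$v$'' equal to the Grassmannian permutation $v(\gamma)$ and its ``$w$'' equal to $v(\tau)$: the tree is built from the \emph{top} path of $\gamma$'s diagram by matching parentheses the ``wrong'' way, i.e.\ matching ``)('', and the leaf capacities are the vertical distances from the inner corners of the top path of $\gamma$ to the top path of $\tau$. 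These are exactly the two rooted trees with exactly the capacities used to define $\mathcal{T}_{\tau,\gamma}$. Finally, since the generating function (weighted by total label sum) of the set of pairs of independent edge labellings is the product of the two individual generating functions, multiplying the two instances of Theorem~\ref{thm:LS-KL} yields $P_{\tau,\gamma}(q)=\sum_{t\in\mathcal{T}_{\tau,\gamma}}q^{|t|}$.

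I expect the main obstacle to be the convention bookkeeping in the second step: one must follow a chosen $T$-fixed point from $Y_\gamma = X_u^v$ through the identification of the two local factors with the local intersection cohomology of $X^{v(\gamma)^{-1}}$ at $v(\tau)^{-1}$ and of $X_{w_0^K u(\gamma)^{-1}}$ at $u'$, and from there to cograssmannian KL polynomials, keeping straight left versus right $W_K$-cosets, the behaviour of Bruhat order under left and right multiplication by $w_0$ and by $w_0^K$, and the fact that --- unlike in Section~\ref{sec:path-criteria}, where only a Zariski-closed locus was being tracked --- here the two sides must be matched degree by degree as polynomials in $q$. A secondary point requiring care is checking that the statement's capacity convention (``vertical distance from a singular corner of $\gamma$'s path to $\tau$'s path'') literally coincides with the Lascoux--Sch\"utzenberger capacity (``the unique $c\ge 0$ with $(a-c,b+c)$ on the lower path'') after the $180^\circ$ rotation of Remark~\ref{rmk:upside-down-ls} used for the top-path tree, where the roles of left and right legs are interchanged.
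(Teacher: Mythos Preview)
Your proposal is correct and follows essentially the same approach as the paper: identify $Y_\gamma$ as a Richardson variety via Theorem~\ref{thm:richardson-theorem}, locate the distinguished representative of $Q_\tau$ via Lemma~\ref{lem:t-fixed-representative}, apply the Knutson--Woo--Yong multiplicativity of local $IH$ Poincar\'e polynomials (which the paper records as a separate Theorem~\ref{thm:kwy-ih-theorem} after a brief K\"unneth argument) to factor $P_{\tau,\gamma}(q)$ as $P_{w_0 v(\tau),\,w_0 v(\gamma)}(q)\cdot P_{u(\tau),\,u(\gamma)w_0^K}(q)$, and then evaluate each cograssmannian factor with Theorem~\ref{thm:LS-KL} and Remark~\ref{rmk:upside-down-ls}. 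The convention bookkeeping you flag as the main obstacle is exactly what the paper spells out, using the same two standard facts ($P_{a,b}=P_{a^{-1},b^{-1}}$ and $W_K$-coset invariance of cograssmannian KL polynomials).
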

\begin{proof}
The following Kunneth formula is given in \cite{GM-83}:  Given two stratified pseudomanifolds $X$ and $Y$, the intersection homology sheaf (assuming the middle perversity) is given by
\[ \mathcal{IH}(X\times Y) \cong \mathcal{IH}(X)\otimes\mathcal{IH}(Y). \]

Now, given points $x\in X$ and $y\in Y$, localizing at $p=(x,y)\in X\times Y$ tells us that 
\[ \mathcal{IH}_{p}(X \times Y)=\mathcal{IH}_x(X) \otimes \mathcal{IH}_y(Y). \]

Now the usual Kunneth formula tells us (since we are working with sheaves of vector spaces, so there are no flatness issues to worry about) that
\[ IH^i_p(X\times Y)\cong \bigoplus_{j+k=i} IH^j_x(X)\otimes IH^k_y(Y). \]

We apply this to a Richardson variety $X_u^v$, utilizing results of \cite{Knutson-Woo-Yong-13}.  Since, for $p\in\mathbb{C}^n$ (or actually any smooth space), $IH^k_p(\mathbb{C}^n)=0$ for $k>0$ and $\dim IH^0_p(\mathbb{C}^n)=1$, an application of \cite[Theorem 1.1]{Knutson-Woo-Yong-13} gives the following:

\begin{theorem}\label{thm:kwy-ih-theorem}
Let $p\in X_u^v$, and suppose $p\in X_y^\circ \cap X_\circ^w$.  Then
\[ IH^i_p(X_u^v) \cong \bigoplus_{j+k=i} IH^j_{yB/B}(X_u)\otimes IH^k_{wB/B}(X^v). \]
Consequently,
\[ \sum_{i} \dim(IH^i_p(X_u^v)) q^{i/2} = P_{w_0w,w_0v}(q)P_{y,u}(q). \]
\end{theorem}

Now, by Theorem \ref{thm:richardson-theorem}, if $\gamma$ avoids $(1,2,1,2)$, $Y_\gamma$ is the Richardson variety $X_u^v$ where $u=w_0^Ku(\gamma)^{-1}$, and $v = v(\gamma)^{-1}$.  By the results of Section \ref{ssec:geometric-klv}, the KLV polynomial $P_{\tau,\gamma}$ is the $IH$-Poincar\'{e} polynomial for $Y_{\gamma}$ at a point of $Q_{\tau}$, and Theorem \ref{thm:kwy-ih-theorem} applies to this computation directly.   Recall that Lemma \ref{lem:t-fixed-representative} tells us that the distinguished representative of $Q_{\tau}$ lies in $X_u^0 \cap X_0^v$, where $v=v(\tau)^{-1}$, and where $u^{-1}=u(\tau)w$ for some $w \in W_K$.  Thus Theorem \ref{thm:kwy-ih-theorem} says that
\[ P_{\tau,\gamma}(q) = P_{w_0v, w_0v(\gamma)^{-1}}(q) P_{u,w_0^Ku(\gamma)^{-1}}(q) = P_{w_0v(\tau),w_0v(\gamma)}(q) P_{u(\tau)w,u(\gamma)w_0^K}(q) = \]
\[ P_{w_0v(\tau),w_0v(\gamma)}(q) P_{u(\tau),u(\gamma)w_0^K}(q), \]
where we have used the following standard facts:
\begin{enumerate}
	\item $P_{a,b}(q) = P_{a^{-1},b^{-1}}(q)$ for any $a,b \in W$; and
	\item If $u$ is cograssmannian and $x,x' \leq u$ are such that $x' = xw$ for $w \in W_K$, then $P_{x',u}(q) = P_{x,u}(q)$.
\end{enumerate}

Now, recall that $u(\gamma)$ and $v(\gamma)$ are Grassmannian, so $u(\gamma)w_0^K$ and $w_0v(\gamma)$ are cograssmannian.  Thus each of the polynomials in the expression above is computed by Theorem \ref{thm:LS-KL}.  (We use Remark \ref{rmk:upside-down-ls} to compute $P_{w_0v(\tau),w_0v(\gamma)}(q)$.)  As mentioned, the path diagram for $\tau$ consists of the path for $v(\tau)$ on top and the path for $u(\tau)$ on bottom.  The polynomial described by our theorem is thus the appropriate product of KL polynomials.
\end{proof}

\begin{example}
We compute the KLV polynomial $P_{\tau,\gamma}(q)$ where $\gamma=(1,2,+,-,+,-,2,1)$, and $\tau=(+,-,+,-,+,-,+,-)$.  The path diagram for $\gamma$, with the added (dashed) path diagram for $\tau$, is given in Figure \ref{fig:klv-path-diagram}.

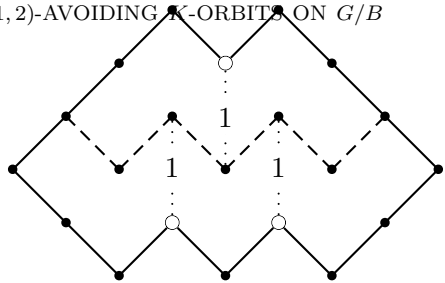
\begin{figure}[h]
\centering
\begin{pspicture}(4,4)
\rput[b]{-45}(0,2.5){
\psdots(0,0)(0,1)(0,2)(0,3)(1,4)(2,4)(3,4)(4,4)(4,3)(4,2)(3,1)(2,0)(1,0)(1,1)(3,3)
\dotnode[dotstyle=o,dotsize=0.195](1,3){A}
\dotnode[dotstyle=o,dotsize=0.195](2,1){B}
\dotnode[dotstyle=o,dotsize=0.195](3,2){C}
\dotnode(1,2){D}
\dotnode(2,2){E}
\dotnode(2,3){F}
\psline(0,0)(0,3)
\psline(0,3)(0.90,3)
\psline(1,3.10)(1,4)
\psline(1,4)(4,4)
\psline(4,4)(4,2)
\psline(2,0)(2,0.90)
\psline(2.10,1)(3,1)
\psline(3,1)(3,1.90)
\psline(3.10,2)(4,2)
\psline(2,0)(0,0)
\psline[linestyle=dashed](0,1)(1,1)
\psline[linestyle=dashed](1,1)(1,2)
\psline[linestyle=dashed](1,2)(2,2)
\psline[linestyle=dashed](2,2)(2,3)
\psline[linestyle=dashed](2,3)(3,3)
\psline[linestyle=dashed](3,3)(3,4)
}
\ncline[linestyle=dotted]{A}{E}
\ncput*{1}
\ncline[linestyle=dotted]{B}{D}
\ncput*{1}
\ncline[linestyle=dotted]{C}{F}
\ncput*{1}
\end{pspicture}
\caption{The path diagram for $\tau=(+,-,+,-,+,-,+,-)$, $\gamma=(1,2,+,-,+,-,2,1)$}
\label{fig:klv-path-diagram}
\end{figure}

The rooted trees and leaf capacities used to calculate $P_{u(\tau),u(\gamma)w_0^K}(q) = P_{13572468,87536421}(q)$ and $P_{w_0v(\tau),w_0v(\gamma)}(q)=P_{86427531,87645321}(q)$ are
given as Figure \ref{fig:klv-trees}.

\begin{figure}[h]
\centering
\begin{pspicture}(6,4)
\psdots(1,0)(5,0)
\dotnode(0,1){A}
\dotnode(2,1){B}
\dotnode(5,1){C}
\psline(1,0)(0,1)
\psline(1,0)(2,1)
\psline(5,0)(5,1)
\nput*[labelsep=0.2]{175}{A}{1}
\nput*[labelsep=0.2]{5}{B}{1}
\nput*[labelsep=0.2]{5}{C}{1}
\end{pspicture}
\caption{The rooted trees for $P_{\tau,\gamma}(q)$}
\label{fig:klv-trees}
\end{figure}
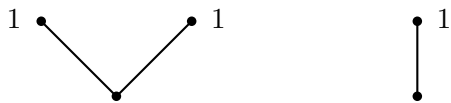

There are $4$ valid edge labellings of the first tree, and $2$ of the second, giving the following two ordinary KL polynomials:

\[ P_{13572468,87536421}(q) = 1 + 2q + q^2, \]
and
\[ P_{86427531,87645321}(q) = 1 + q. \]

Thus
\[ P_{\tau,\gamma}(q) = (1+2q+q^2)(1+q) = 1 + 3q + 3q^2 + q^3. \]
\end{example}

\section*{Acknowledgements}
We gratefully acknowledge Alexander Yong for helpful discussions, as well as for allowing our use of Macaulay 2 \cite{M2} code developed jointly by him and the second author for the purpose of furthering this project.  We also wish to thank Peter Trapa and Monty McGovern for their generous help with various technical details.

\bibliographystyle{plain}
\bibliography{../sourceDatabase}

\end{document}